\documentclass[12pt,reqno]{amsart}
\usepackage{amsmath,a4wide}
\usepackage{xfrac}
\usepackage{stmaryrd,mathrsfs,bm,amsthm,mathtools,yfonts,amssymb,color}
\usepackage{xcolor}


\begingroup
\newtheorem{theorem}{Theorem}[section]
\newtheorem{lemma}[theorem]{Lemma}
\newtheorem{proposition}[theorem]{Proposition}
\newtheorem{corollary}[theorem]{Corollary}
\endgroup

\theoremstyle{definition}
\newtheorem{definition}[theorem]{Definition}
\newtheorem{remark}[theorem]{Remark}
\newtheorem{ipotesi}[theorem]{Assumption}

\setcounter{tocdepth}{2}
\numberwithin{equation}{section}

\setcounter{section}{-1}

\newcommand\supp{{\rm spt}}
\newcommand{\sing}{{\rm Sing}}
\newcommand{\reg}{{\rm Reg}}
\newcommand\rD{{\rm D}}
\newcommand\res{\mathop{\hbox{\vrule height 7pt width .3pt depth 0pt
\vrule height .3pt width 5pt depth 0pt}}\nolimits}
\newcommand{\im}{{\rm Im}}

\newcommand{\bT}{\mathbf{T}}
\newcommand{\bG}{\mathbf{G}}
\newcommand{\cH}{{\mathcal{H}}}
\newcommand{\bJ}{{\mathbf{J}}}

\newcommand{\p}{{\mathbf{p}}}
\newcommand{\q}{{\mathbf{q}}}


\newcommand{\sW}{{\mathscr{W}}}



\newcommand\bmo{{\bm m}_0}

\newcommand{\cM}{{\mathcal{M}}}
\newcommand{\bU}{{\mathbf{U}}}

\newcommand{\phii}{{\bm{\varphi}}}
\newcommand{\Phii}{{\bm{\Phi}}}

\newcommand{\cU}{{\mathcal{U}}}


\newcommand{\bD}{{\mathbf{D}}}
\newcommand{\bH}{{\mathbf{H}}}
\newcommand{\bI}{{\mathbf{I}}}
\newcommand{\bE}{{\mathbf{E}}}
\newcommand{\bF}{{\mathbf{F}}}
\newcommand{\bOmega}{{\mathbf{\Omega}}}
\newcommand{\bSigma}{{\mathbf{\Sigma}}}

\newcommand{\bB}{{\mathbf{B}}}


\newcommand{\be}{{\mathbf{e}}}



\newcommand\N{{\mathbb N}}

\newcommand\R{{\mathbb R}}

\newcommand{\eps}{{\varepsilon}}


\def\Xint#1{\mathchoice
{\XXint\displaystyle\textstyle{#1}}%
{\XXint\textstyle\scriptstyle{#1}}%
{\XXint\scriptstyle\scriptscriptstyle{#1}}%
{\XXint\scriptscriptstyle\scriptscriptstyle{#1}}%
\!\int}
\def\XXint#1#2#3{{\setbox0=\hbox{$#1{#2#3}{\int}$ }
\vcenter{\hbox{$#2#3$ }}\kern-.6\wd0}}
\def\mint{\Xint-}



\newcommand{\cone}{{\times\hspace{-0.6em}\times\,}}
\newcommand{\Lip}{{\rm {Lip}}}

\newcommand{\dist}{{\rm {dist}}}
\newcommand{\dv}{{\text {div}}}

\newcommand\Id{{\rm Id}\,}


\newcommand{\cA}{{\mathcal{A}}}
\newcommand{\cB}{{\mathcal{B}}}
\newcommand{\cG}{{\mathcal{G}}}

\newcommand{\cF}{{\mathcal{F}}}

\newcommand{\cR}{{\mathcal{R}}}

\newcommand\sS{{\mathscr S}}

\newcommand{\mass}{{\mathbf{M}}}

\newcommand{\Iq}{{\mathcal{A}}_Q}
\def\a#1{\left\llbracket{#1}\right\rrbracket}

\newcommand{\D}{\textup{Dir}}
\newcommand{\de}{\partial}

\newcommand{\etaa}{{\bm{\eta}}}
\newcommand{\ph}{\varphi}


\newcommand\B{{\mathbf{B}}}
\newcommand{\bh}{\mathbf{h}}
\newcommand\J{{\mathbf{J}}}

\title[Regularity of area minimizing currents III: Blow-up]{Regularity of area minimizing currents III:\\ Blow-up}
\author{Camillo De Lellis}
\address{Mathematik Institut der Universit\"at Z\"urich}
\email{delellis@math.uzh.ch}

\author{Emanuele Spadaro}
\address{Max-Planck-Institut f\"ur Mathematik in den Naturwissenschaften, Leipzig}
\email{spadaro@mis.mpg.de}

\subjclass[2010]{49Q15, 49N60, 49Q05}
\keywords{Integer rectifiable currents, Regularity, Area minimizing}

\begin{document}
\begin{abstract}
This is the last of a series of three papers in which
we give a new, shorter proof of 
a slightly improved version of
Almgren's partial regularity of area mi\-ni\-mi\-zing currents
in Riemannian manifolds.
Here we perform a blow-up analysis  deducing the regularity of area minimizing currents from that of Dir-minimizing multiple valued functions.
\end{abstract}

\maketitle

\section{Introduction}
In this paper we complete the proof of 
a slightly improved version of
the celebrated Almgren's partial regularity result for area minimizing currents in a Riemannian manifold (see \cite{Alm}),
namely Theorem \ref{t:main} below.

\begin{ipotesi}\label{ipotesi_principale}
Let $\eps_0\in ]0,1[$, $m,\bar{n}\in \N\setminus \{0\}$ and $l\in \N$. We denote by
\begin{itemize}
\item[(M)] $\Sigma\subset\R^{m+n} = \R^{m+\bar{n}+l}$ an embedded $(m+\bar{n})$-dimensional submanifold of class $C^{3,\eps_0}$;
\item[(C)] $T$ an integral current of dimension $m$ with compact support $\supp (T) \subset \Sigma$,
area minimizing in $\Sigma$.
\end{itemize}
\end{ipotesi}

In this paper we follow the notation of \cite{DS4} concerning balls, cylinders and disks. In particular $\bB_r (x)\subset \mathbb R^{m+n}$ will denote the Euclidean ball of radius $r$ and center $x$.

\begin{definition}\label{d:reg_sing}
For $T$ and $\Sigma$ as in Assumption \ref{ipotesi_principale} we define
\begin{align}
&\reg (T) := \big\{x\in \supp (T) :\; \supp(T)\cap  \bB_r (x) \mbox{ is a $C^{3,\eps_0}$ submanifold for some $r>0$}\big\},\\
&\sing (T) := \supp (T) \setminus \big(\supp (\partial T) \cup \reg (T)\big).
\end{align}
\end{definition}
The partial regularity result proven first by Almgren \cite{Alm} under the more
restrictive hypothesis $\Sigma \in C^5$
gives an estimate on the Hausdorff dimension ${\rm dim}_H (\sing (T))$ of $\sing(T)$.

\begin{theorem}\label{t:main}
${\rm dim}_H (\sing(T))\leq m-2$ for any
$m,\bar{n}, l, T$ and $\Sigma$ as in Assumption \ref{ipotesi_principale}.
\end{theorem}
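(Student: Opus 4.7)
The plan is to combine Almgren's frequency-based blow-up with Federer's dimension reduction, converting the regularity problem for $T$ into the analogous problem for $\D$-minimizing $Q$-valued functions, whose regularity is addressed in the previous papers of the series. A first step uses the standard stratification of $\sing(T)$ via tangent-cone type: at points $x \in \sing(T)$ where no tangent cone is of the form $Q\a{\pi}$ for some $m$-plane $\pi$, upper semi-continuity of the density together with a dimension-reduction argument \'a la White give a Hausdorff dimension bound of $m-2$ on this portion of the singular set. Thus the real task is to bound $\dim_H$ of the set $\sing_f(T)$ of \emph{flat singularities}, that is, interior points of integer density $Q\geq 2$ at which at least one tangent cone is of the form $Q\a{\pi}$.

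Fix then $x_0 \in \sing_f(T)$ and a sequence of scales $r_k \downarrow 0$ along which the cylindrical excess of $T$ in $\bB_{r_k}(x_0)$ converges to zero. At each scale I invoke the center-manifold machinery of the second paper in the series: this produces an $m$-dimensional $C^{3,\eps_0}$ submanifold $\cM_k$ averaging the $Q$ sheets of $T$, together with a Lipschitz $\Iq$-valued normal approximation $N_k:\cM_k \to \Iq$ constructed in \cite{DS4}, satisfying $\etaa \circ N_k \equiv 0$ on a region of controlled size. This $N_k$ encodes precisely the oscillation of $T$ around $\cM_k$, and away from a negligible ``high-frequency'' set its singular points correspond bijectively to the singular points of $T$ in the associated region.

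The core analytical step is the blow-up. Set $\bar N_k(x) := h_k^{-1} N_k (r_k x)$ for normalizing constants $h_k$ chosen so that the Dirichlet energies $\D (\bar N_k, \bB_1)$ equal $1$, and show along a subsequence the strong $L^2$ and weak $H^1$ convergence $\bar N_k \to u$ to a nontrivial $\D$-minimizing $Q$-valued function $u:\bB_1\subset\R^m \to \Iq (\R^{\bar n})$ with $\etaa\circ u \equiv 0$. The main tool is the almost-monotonicity of Almgren's frequency function $I_k(r) = r \int_{\bB_r}|DN_k|^2 \big/ \int_{\partial \bB_r}|N_k|^2$, which stays uniformly bounded along the blow-up and simultaneously forces $u$ to be nontrivial and homogeneous at the boundary. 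I expect this nontriviality to be the genuine hard point: one must rule out that the rescaled Dirichlet energy of $\bar N_k$ vanishes in the limit because the oscillation of $T$ collapses onto $\cM_k$ itself. Ruling this out requires a quantitative three-way comparison between the excess of $T$, the Dirichlet energy of $N_k$ and the curvature errors produced by $\cM_k$ and $\Sigma$, ensuring that the center-manifold contribution remains strictly subcritical with respect to $h_k$ at every scale $r_k$.

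Once $u$ is produced, the final step is a persistence-of-singularities statement: any sequence $y_j \in \sing(T) \cap \bB_{r_k}(x_0)$ rescales, via the projection onto $\cM_k$, to a sequence in $\sing(N_k)$ that accumulates, in the blow-up limit, at a point of $\sing(u)$. The partial regularity of $\D$-minimizing $Q$-valued functions from the first two papers of the series gives $\dim_H(\sing(u)) \leq m-2$. Federer's dimension reduction lemma, applied to the stratified set $\sing_f(T)$ via the map constructed by this blow-up, then yields $\dim_H(\sing_f(T)) \leq m-2$; combined with the stratification bound from the first paragraph, this proves the theorem.
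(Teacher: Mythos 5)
Your high-level roadmap (stratify, then study flat singularities via center manifold plus frequency bound plus blow-up to a $\D$-minimizer) matches the paper's strategy, but two of your steps conceal genuine gaps that the paper resolves with machinery you do not mention.

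The more serious gap is the final step. You appeal to a persistence-of-singularities principle -- that points of $\sing(T)\cap\bB_{r_k}(x_0)$ project to points of $\sing(N_k)$ accumulating at $\sing(u)$ -- followed by Federer's dimension reduction. This is false as stated, and the scheme does not close. Singular points of the current do not map to singular points of the Lipschitz normal approximation $N_k$ (which is only a good approximation away from an exceptional set), and even genuine singular points of $N_k$ need not accumulate at singularities of the blow-up limit $u$: a sequence of points with multiplicity $Q$ and vanishing sheet separation converges, in the limit, to a point where $u$ may be perfectly regular (with branching order collapsing to $0$). More fundamentally, dimension reduction \`a la Federer requires a compactness/rescaling structure for $\sing(T)$ that is not available here. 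What the paper actually proves (Section~\ref{s:setting+capacitario}) is a \emph{capacitary} statement: assuming $\cH^{m-2+\alpha}(\sing(T))>0$ it shows that the multiplicity-$Q$ set $\rD_Q(\bar T_k)$ retains uniformly positive $\cH^{m-2+\alpha}_\infty$-content (Proposition~\ref{p:seq} and \eqref{e:sing grande}), and then uses the quantitative dichotomy of \cite[Proposition~3.6]{DS4} to produce, at each such point, a ball carrying a definite fraction $\bh_k^2$ of Dirichlet energy (inequalities \eqref{e:dal_basso_finale}--\eqref{e:inside}). Summing over a Vitali subcover and comparing with the reverse-Sobolev bound \eqref{e:rev_Sob2} gives the contradiction. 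No pointwise map from $\sing(T)$ into $\sing(u)$ is constructed or needed, and the argument is run by contradiction from Assumption~\ref{assurdo}, not as a direct dimension bound.

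The second gap is in step two: ``at each scale I invoke the center-manifold machinery.'' A single center manifold built at scale $r_k$ does not control the current over the range of scales needed to bound the frequency function. The paper introduces \emph{intervals of flattening} $I_j=]s_j,t_j]$ (Section~\ref{s:flattening}) precisely because the center manifold constructed at scale $t_j$ degrades as one zooms in, and a stopping criterion (Stop)/(Go) governed by the Whitney decomposition decides when to rebuild it. The frequency estimate (Theorem~\ref{t:frequency}) is proved over entire intervals $[s_j/t_j,3]$, and the boundedness across consecutive intervals (Theorem~\ref{t:boundedness}) requires a delicate comparison of successive center manifolds, split into the cases $t_j=s_{j-1}$ and $t_j<s_{j-1}$, plus the nondegeneracy Lemma~\ref{l:flatness}. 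Without this structure, the frequency bound cannot be propagated down to $r_k\to0$, which is exactly what your blow-up needs. Your normalization of $\bar N_k$ by Dirichlet energy rather than $L^2$ norm is a minor deviation (the paper uses $\bh_k=\|\bar N_k\|_{L^2}$ and obtains the Dirichlet bound from Corollary~\ref{c:rev_Sob}) but it does not affect correctness; the two gaps above are the ones that need to be filled.
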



In this note we complete the proof of Theorem~\ref{t:main}, based
on our previous works \cite{DS1, DS2, DS3, DS4}, thus
providing a new, and much shorter, account of one of the most fundamental
regularity result in geometric measure theory; we refer to \cite{DS3} for
an extended general introduction to all these works.
The proof is carried by contradiction: in the sequel we
will always assume the following.

\begin{ipotesi}[Contradiction]\label{assurdo}
There exist $m \geq 2, \bar{n}, l$, $\Sigma$ and $T$  as in Assumption \ref{ipotesi_principale} such that 
$\cH^{m-2+\alpha}(\sing(T)) >0$ for some $\alpha>0$. 
\end{ipotesi}

The hypothesis $m\geq 2$ in Assumption \ref{assurdo} is justified by the well-known fact that
$\sing (T) = \emptyset$ when $m=1$ (in this case $\supp (T)\setminus \supp (\partial T)$
is locally the union of finitely many non-intersecting geodesic segments).
Starting from Assumption \ref{assurdo}, we make a careful blow-up analysis, split in the following steps.

\medskip

\subsection{Flat tangent planes}
We first reduce to flat 
blow-ups around a given point, which in the sequel is assumed to be the origin.  
These blow-ups will also be chosen so that the size of the singular set satisfies a uniform estimate from below (cf.~Section~\ref{s:flat}).
 
\subsection{Intervals of flattening}
For appropriate rescalings of the current around the origin 
we take advantage of the center manifold constructed
in \cite{DS4}, which
gives a good approximation of the average of the sheets of the
current at some given scale. 
However, since it might fail to do so at different scales,
in Section~\ref{s:flattening} 
we introduce a \textit{stopping condition} for the center manifolds
and define appropriate {\em intervals of flattening $I_j = [s_j, t_j]$}.
For each $j$ we construct a different center manifold 
$\cM_j$ and approximate the (rescaled) current with 
a suitable multi-valued map on the normal bundle of $\cM_j$.

\subsection{Finite order of contact}
A major difficulty in the analysis is to prove that the minimizing current has finite order of contact with the center manifold.
To this aim, in analogy with the case of harmonic multiple valued functions
(cf.~\cite[Section 3.4]{DS1}),
we introduce a variant of the \textit{frequency function} and prove its
almost monotonicity and boundedness.
This analysis, carried in Sections~\ref{s:frequency}, ~\ref{s:variationII} and ~\ref{s:freq_bound}, relies on the variational formulas 
for images of multiple valued maps as computed in \cite{DS2} and
on the careful estimates of \cite{DS4}. 
Our frequency function differs from that of Almgren and allows for simpler estimates.

\subsection{Convergence to $\D$-minimizer and contradiction}
Based on the previous steps, we can blow-up the Lipschitz approximations
from the center manifold $\cM_j$ in 
order to get a limiting Dir-minimizing function on a flat $m$-dimensional domain.
We then show that the singularities of the rescaled currents converge to singularities of that limiting ${\rm Dir}$-minimizer, contradicting the partial regularity of \cite[Section 3.6]{DS1} and,
hence, proving Theorem \ref{t:main}.

\medskip

{\bf Acknowledgments.}
The research of Camillo De Lellis has been supported by the ERC grant
agreement RAM (Regularity for Area Minimizing currents), ERC 306247. 
The authors are warmly thankful to Bill Allard for several enlightening
conversations and his constant support.

\section{Flat tangent cones}\label{s:flat}

\begin{definition}[$Q$-points] \label{d:regular_points}
For $Q\in \N$, we denote by $\rD_Q(T)$ the points of density $Q$ of the current $T$,
and set
\begin{gather*}
\reg_Q (T) := \reg (T)\cap \rD_Q (T) \quad\text{and}\quad
\sing_Q (T) := \sing (T)\cap \rD_Q (T).
\end{gather*}
\end{definition}

\begin{definition}[Tangent cones]\label{d:t_cones}
For any $r>0$ and $x\in \R^{m+n}$, $\iota_{x,r}: \R^{m+n}\to \R^{m+n}$ is the map
$y\mapsto \frac{y-x}{r}$ and
$T_{x, r} := (\iota_{x, r})_{\sharp} T$.
The classical monotonicity formula (see \cite{Sim} and \cite[Lemma~A.1]{DS3}) implies that, for every $r_k\downarrow 0$ and $x\in \supp(T) \setminus \supp(\de T)$,
there is a subsequence (not relabeled) for which $T_{x, r_k}$ converges
to an integral cycle $S$ which is a cone (i.e., $S_{0,r} = S$ for all $r>0$ and $\partial S=0$)
and is (locally) area-minimizing in $\R^{m+n}$. Such a cone will be called, as usual, {\em a tangent cone to $T$ at $x$}.
\end{definition}

Fix $\alpha>0$. By Almgren's stratification theorem (see \cite[Theorem 35.3]{Sim}),
for $\cH^{m-2+\alpha}$-a.e. $x \in \supp(T) \setminus \supp(\de T)$,
there exists a subsequence of radii $r_k \downarrow 0$
such that $T_{x,r_k}$ converge to an integer multiplicity flat plane.
Similarly, for measure-theoretic reasons, if $T$ is as in Assumption \ref{assurdo}, 
then for $\cH^{m-2+\alpha}$-a.e.~$x \in \supp(T) \setminus \supp(\de T)$
there is a subsequence $s_k\downarrow 0$ such that
$\liminf_k \cH^{m-2+\alpha}_\infty (\rD_Q (T_{x,s_k}) \cap \B_1) >0$ (see again \cite{Sim}). Obviously
there would then be $Q\in \N$ and $x\in \sing_Q (T)$ where both subsequences
exist.
The two subsequences might, however, differ: in the next proposition  
we show the existence of one point and a single subsequence along which
\textit{both} conclusions hold. For the relevant notation (concerning, for instance, excess and height of currents) we refer to \cite{DS3,DS4}.

\begin{proposition}[Contradiction sequence]\label{p:seq}
Under Assumption \ref{assurdo}, there are $m,n, Q \geq 2$, $\Sigma$ and $T$ as in Assumption
\ref{ipotesi_principale},
reals $\alpha,\eta>0$, and a sequence $r_k\downarrow 0$ such that
$0\in \rD_Q (T)$ and the following holds:
\begin{gather}
\lim_{k\to+\infty}\bE(T_{0,r_k}, \B_{6\sqrt{m}}) = 0,\label{e:seq1}\\
\lim_{k\to+\infty} \cH^{m-2+\alpha}_\infty (\rD_Q (T_{0,r_k}) \cap \B_1) > \eta,\label{e:seq2}\\
\cH^m \big((\B_1\cap \supp (T_{0, r_k}))\setminus \rD_Q (T_{0,r_k})\big) > 0\quad \forall\; k\in\N.\label{e:seq3}
\end{gather}
\end{proposition}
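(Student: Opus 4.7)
My plan is three-step: extract a density value $Q\geq 2$ with $\cH^{m-2+\alpha}(\sing_Q(T))>0$, locate a point of $\sing_Q(T)$ with positive Hausdorff upper density, and show that along the density-bound subsequence of rescalings the tangent cone is automatically a flat plane. Decompose $\sing(T) = \bigcup_{Q\geq 1}\sing_Q(T)$; Allard's $\eps$-regularity rules out $Q=1$, and countable subadditivity yields $Q\geq 2$ with $\cH^{m-2+\alpha}(\sing_Q(T))>0$. The standard Hausdorff upper-density theorem then provides, for $\cH^{m-2+\alpha}$-a.e.\ $x\in\sing_Q(T)$,
\[
\limsup_{r\to 0^+} r^{-(m-2+\alpha)}\,\cH^{m-2+\alpha}_\infty(\sing_Q(T)\cap\bB_r(x)) \,\geq\, c_0
\]
for a dimensional $c_0>0$. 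Pick one such $x_0$, translate to set $x_0=0$, and extract $s_k\downarrow 0$ with $s_k^{-(m-2+\alpha)}\,\cH^{m-2+\alpha}_\infty(\sing_Q(T)\cap \bB_{s_k})\geq c_0/2$. Since rescaling preserves densities, $\iota_{0,s_k}(\sing_Q(T)\cap\bB_{s_k})\subset \rD_Q(T_{0,s_k})\cap\B_1$ and the scaling law of $\cH^{m-2+\alpha}_\infty$ gives $\cH^{m-2+\alpha}_\infty(\rD_Q(T_{0,s_k})\cap\B_1)\geq c_0/2$ uniformly in $k$: this is \eqref{e:seq2} with $\eta:=c_0/4$.

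To establish \eqref{e:seq1} along the \emph{same} sequence $s_k$: the monotonicity formula yields uniform mass bounds on $T_{0,s_k}$ over compacta, so by closure/compactness of integer rectifiable area-minimizing currents, a subsequence converges to an area-minimizing tangent cone $S$ with $\Theta(S,0)=Q$. The compact sets $\rD_Q(T_{0,s_k})\cap \overline{\B_1}$ (closed by upper semicontinuity of density) admit a further Hausdorff-convergent subsequence to some compact $K\subset\overline{\B_1}$; upper semicontinuity of density under joint current/point convergence gives $K\subset \rD_Q(S)$, and upper semicontinuity of $\cH^{m-2+\alpha}_\infty$ on Hausdorff-convergent compact sets (any ball cover of $K$ covers, after slight dilation of radii, every sufficiently close set) then yields $\cH^{m-2+\alpha}_\infty(\rD_Q(S)\cap\overline{\B_1})\geq c_0/2$. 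The key structural dichotomy for area-minimizing cones now kicks in: by Federer's dimension reduction combined with the fact that $(m-1)$-symmetric area-minimizing cones are flat planes, for any non-planar area-minimizing $m$-cone $S$ with $\Theta(S,0)=Q$ the density-$Q$ set $\rD_Q(S)$ has Hausdorff dimension at most $m-2$. Since $\cH^{m-2+\alpha}_\infty>0$ forces Hausdorff dimension strictly greater than $m-2$, $S$ must be $Q\,\a{\pi}$ for some $m$-plane $\pi$, and therefore $\bE(T_{0,s_k},\B_{6\sqrt{m}})\to 0$ along the subsequence. Re-indexing this subsequence as $r_k$ yields \eqref{e:seq1}--\eqref{e:seq2} jointly.

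Finally \eqref{e:seq3} follows by contradiction: if $\cH^m((\B_1\cap\supp(T_{0,r_k}))\setminus\rD_Q(T_{0,r_k}))=0$ for some $k$, then density equals $Q$ on $\cH^m$-a.e.\ point of $\supp(T_{0,r_k})\cap\B_1$; combined with the small excess from \eqref{e:seq1}, the multi-valued Lipschitz approximation and regularity theory of \cite{DS3} would force $\supp(T_{0,r_k})\cap\B_1$ to be a smooth $Q$-sheeted covering of a $C^1$ graph, contradicting $0\in\sing(T_{0,r_k})$ (singularities are scale-invariant). Passing to a diagonal subsequence secures \eqref{e:seq3} at every $k$.

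The principal delicate step is forcing the subsequential tangent cone along the density-bound sequence to be flat; this rests on the ``density gap'' between planar and non-planar area-minimizing $m$-cones (dimensions $m$ vs.\ $\leq m-2$ for their $\rD_Q$ sets) combined with the semicontinuity of $\cH^{m-2+\alpha}_\infty$ under Hausdorff convergence, which together allow transfer of the uniform lower bound on the rescaled currents to the limiting cone.
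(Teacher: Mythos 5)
Your overall strategy is sound, and on the central dichotomy you have in fact found a genuinely shorter route than the paper. Where the paper, after showing $\cH^{m-2+\alpha}_\infty(\rD_Q(S)\cap\bar\B_1)>0$ for the tangent cone $S$, performs a second blow-up at a point $x\in\sing_Q(S)\setminus\{0\}$, appeals to the splitting of tangent cones to cones (\cite[Lemma 35.5]{Sim}), invokes the minimality of $m$ as an induction hypothesis, and uses Lemma~\ref{l:regular cone} (Allard) on the resulting cone $S_1$ to force $S_1$ to be a plane and then sets $T:=\tau_\sharp S$, you observe directly that the density-$Q$ set of the cone $S$ is its \emph{spine} --- the set $\{y:\Theta(S,y)=\Theta(S,0)\}$ is a linear subspace along which $S$ is invariant, and if that subspace has dimension $m-1$ then the transverse cone is a one-dimensional minimizing cycle, hence a line with multiplicity, so $S$ is already a plane. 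Therefore $S$ non-planar forces $\dim \rD_Q(S)\leq m-2$, contradicting $\cH^{m-2+\alpha}_\infty(\rD_Q(S)\cap\bar\B_1)>0$. This bypasses the induction on $m$, the choice of minimal $Q$, Lemma~\ref{l:regular cone}, and the replacement of $T$ by a translated cone. The phrase ``Federer's dimension reduction'' is not quite the right name for what you are actually using; what carries the day is that $\rD_Q$ of a cone \emph{is} its spine, a subspace (a consequence of the rigidity in the monotonicity formula), plus the classification of one-dimensional minimizing cones. The fix in the exposition would be to say exactly that.

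Three smaller points deserve attention. First, the set identity $\sing(T)=\bigcup_{Q\geq 1}\sing_Q(T)$ is not exact --- singular points need not have integer density. You should, as the paper does, invoke Almgren's stratification theorem (Theorem~\ref{t:strat}) to see that the non-integer-density points form a set of $\cH^{m-3+\alpha}_\infty$-measure zero, hence of $\cH^{m-2+\alpha}$-measure zero, before applying countable subadditivity. Second, $\rD_Q(T_{0,s_k})$ need not be closed (upper semicontinuity only closes $\{\Theta\geq Q\}$); the Hausdorff-semicontinuity step is better run on compact subsets $K_k\subset\rD_Q(T_{0,s_k})\cap\bar\B_1$ carrying almost all the premeasure, whose Hausdorff limit lands in $\rD_Q(S)$ precisely because $\Theta(S,\cdot)\leq Q$ on the cone. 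Third, your argument for \eqref{e:seq3} does not need the small excess from \eqref{e:seq1}, nor a diagonal subsequence: if $\cH^m\big((\B_1\cap\supp(T_{0,r_k}))\setminus\rD_Q(T_{0,r_k})\big)=0$, then $\|T_{0,r_k}\|$-a.e.\ point of $\B_1$ has density exactly $Q$, every tangent cone at $0$ is a multiplicity-$Q$ plane, and Allard's theorem (this is precisely Theorem~\ref{t:strong_degiorgi}) gives $0\in\reg(T_{0,r_k})$, contradicting $0\in\sing(T)$. This works for every fixed $k$ simultaneously, no extraction required.
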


The proof is based on the following lemma.

\begin{lemma}\label{l:regular cone}
Let $S$ be an $m$-dimensional area minimizing integral cone
in $\R^{m+n}$ such that $\partial S=0$,
$Q= \Theta (S, 0) \in \mathbb N$, $\cH^m\big(D_Q(S)) >0$ and $\cH^{m-1}(\sing_Q(S))=0$.
Then, $S$ is an $m$-dimensional plane with multiplicity $Q$.
\end{lemma}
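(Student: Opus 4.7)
The plan is induction on $m$, using the equality case of the monotonicity formula to peel off one translation-invariant line at each step.

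\emph{Splitting step.} Using $\cH^m(\rD_Q(S)) > 0$, pick $x \in \rD_Q(S) \setminus \{0\}$. The monotone quantity $F(r) := \|S\|(\bB_r(x))/(\omega_m r^m)$ tends to $\Theta(S, x) = Q$ as $r \downarrow 0$ by hypothesis. On the other hand, since $S$ is a cone from $0$ with density $Q$, one has $\|S\|(\bB_r(0)) = Q\,\omega_m r^m$ for every $r > 0$, and the elementary inclusions $\bB_{r-|x|}(0) \subset \bB_r(x) \subset \bB_{r+|x|}(0)$ force $F(r) \to Q$ as $r \to \infty$. Monotonicity then gives $F \equiv Q$ on $(0, \infty)$, and the rigidity case of the monotonicity formula for area-minimizing currents implies that $S$ is itself a cone from $x$. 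A current that is simultaneously a cone from two distinct points $0$ and $x$ is translation-invariant along the line $L := \R x$, and the standard decomposition for currents invariant under a one-parameter translation group provides a unique integral cone $S'$ of dimension $m-1$ in the hyperplane $L^\perp$ such that $S = S' \times \llbracket L \rrbracket$.

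\emph{Induction.} The product splitting ensures that $S'$ is area-minimizing in $L^\perp \simeq \R^{m+n-1}$, a cone from $0$, with $\partial S' = 0$ and $\Theta(S', 0) = Q$. Moreover $\rD_Q(S) = \rD_Q(S') \times L$ and $\sing_Q(S) = \sing_Q(S') \times L$, so Fubini gives $\cH^{m-1}(\rD_Q(S')) > 0$ and $\cH^{m-2}(\sing_Q(S')) = 0$. Hence $S'$ satisfies the hypotheses of the lemma with $m$ replaced by $m-1$, and the inductive hypothesis yields $S' = Q \llbracket \pi' \rrbracket$ for an $(m-1)$-plane $\pi' \subset L^\perp$, so that $S = Q \llbracket \pi' \oplus L \rrbracket$ is an $m$-plane with multiplicity $Q$.

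\emph{Base case $m = 1$.} A one-dimensional area-minimizing cone with vanishing boundary is a finite sum $\sum_i k_i \llbracket L_i \rrbracket$ of lines through the origin, with $\sum_i k_i = \Theta(S, 0) = Q$. If at least two distinct lines appear, the origin is a singular point of density $Q$, contradicting $\cH^0(\sing_Q(S)) = 0$. Therefore a single line carries all the multiplicity, and $S = Q \llbracket L_1 \rrbracket$.

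\emph{Main obstacle.} The non-routine inputs are the two classical facts invoked above: the rigidity case in the monotonicity formula (equality forces a cone structure, from the non-negative integral representation of $F'(r)$ in terms of the normal component of the position vector) and the decomposition of a translation-invariant integer rectifiable current into a product with the line of invariance, from Federer's structure theory. Both are standard for area-minimizing currents in arbitrary codimension, but must be cited with some care; the remainder of the argument is a Fubini-type bookkeeping that propagates the two measure-theoretic hypotheses through the induction.
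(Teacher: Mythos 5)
Your proof is correct, but it takes a genuinely different route from the paper. The paper argues directly, without induction: it defines the open set $U := \bigcup_{x\in\reg_Q(S)}\B_{r_x}(x)$ where $S\res\B_{2r_x}(x)$ is a regular $Q$-sheet, uses the cone structure of $S$ and upper semicontinuity of density to show $\supp(S)\cap\partial U\subset\sing_Q(S)$ (hence $\cH^{m-1}$-null), invokes Lemma~\ref{l:separate} to conclude $S\res U$ is a boundaryless cone with density $Q$ at $0$ and $\|S\res U\|$-a.e., and finishes with Allard's theorem to get a plane, then $S\res U=S$. You instead run an induction on $m$: starting from a second $Q$-point $x\ne 0$, the rigidity case of the monotonicity formula promotes $x$ to a second cone vertex, translation invariance along $\R x$ gives a product splitting $S=S'\times\llbracket L\rrbracket$, and a Fubini bookkeeping propagates the two measure hypotheses to the $(m-1)$-dimensional link $S'$. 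Both approaches rest on classical but nontrivial inputs (Allard plus the boundary lemma in the paper; the monotonicity-rigidity and the decomposition of translation-invariant currents in yours); each is a legitimate complete proof. One nice feature of your version is that it makes transparent that the hypothesis $\cH^m(\rD_Q(S))>0$ is exactly what supplies the extra cone vertex, and it dispenses with Allard in the inductive step. It is also worth noting that the paper uses a closely related cone-splitting step (at the level of tangent cones to $S$ at $x\ne 0$) in the proof of Proposition~\ref{p:seq}, so your approach can be seen as unifying the two arguments; a minor simplification would be that the base case $m=1$ already follows from $\cH^1(\rD_Q(S))>0$ alone, since a one-dimensional area-minimizing cycle which is a cone must be a single multiplicity-$Q$ line (so the appeal to $\cH^0(\sing_Q(S))=0$ there is not actually needed).
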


\begin{proof}
For each $x\in \reg_Q (S)$, let  $r_x$ be such that $S\res \B_{2r_x} (x) = Q \a{\Gamma}$
for some regular submanifold $\Gamma$ and set
\[
U := \bigcup_{x \in \reg_Q(S)} \B_{r_x} (x).
\]
Obviously, $\reg_Q (S)\subset U$; hence, by assumption, it is not empty.
Fix $x\in \supp (S)\cap \partial U$. 
Let next $(x_k)_{k \in \N}\subset \reg_Q (S)$ be such that ${\rm dist}\, (x, \B_{r_{x_k}}(x_k)) \to 0$.
We necessarily have that $r_{x_k}\to 0$: otherwise we would have
$x\in \B_{2r_{x_k}} (x_k)$ for some $k$, which would imply $x\in \reg_Q (S)\subset U$, i.e. a contradiction.
Therefore, $x_k\to x$ and,
by \cite[Theorem~35.1]{Sim},
\[
Q=\limsup_{k\to+\infty}\Theta(S,x_k)  \leq \Theta (S, x) = \lim_{\lambda \downarrow 0} \Theta (S, \lambda x) \leq \Theta(S,0) = Q.
\]
This implies $x\in \rD_Q (S)$.
Since $x\in \partial U$, we must then have $x\in \sing_Q (S)$.
Thus, we conclude that
$\cH^{m-1} (\supp (S)\cap \de U) = 0$.
It follows from the standard theory of rectifiable currents
(cf.~Lemma~\ref{l:separate}) that
$S':=S\res U$ has $0$ boundary in $\R^{m+n}$.
Moreover, since $S$ is an area minimizing cone, $S'$ is also an area-minimizing cone. 
By definition of $U$ we have $\Theta (S', x) = Q$ for $\|S'\|$-a.e.~$x$ and, by semicontinuity,
\[
Q\leq\Theta (S', 0)\leq \Theta (S, 0) =Q.
\]
We apply Allard's theorem and deduce that $S'$ is regular,
i.e.~$S'$ is an $m$-plane with multiplicity $Q$. Finally, from $\Theta (S', 0) = \Theta (S, 0)$, we infer $\mass (S\res \B_1)=\mass (S'\res \B_1)$ and then
$S'=S$.
\end{proof}

\begin{proof}[Proof of Proposition~\ref{p:seq}] Let $m>1$
be the smallest integer for which Theorem \ref{t:main} fails.
By ~Theorem~\ref{t:strat}
there must be 
an integer rectifiable area minimizing current $R$ of dimension $m$
and a positive integer $Q$
such that the Hausdorff dimension of $\sing_Q (R)$ is larger than $m-2$ (note that Theorem \ref{t:strat} is just a corollary of a well known stratification theorem by Almgren, cf. \cite{Alm,Sim, White97}).
We fix the smallest $Q$ for which such a current $R$ exists.
Recall that, by the upper semicontinuity of the density and
a straightforward application of Allard's regularity theorem (see Theorem~\ref{t:strong_degiorgi}),
$\sing_1 (R) =\emptyset$, i.e.~$Q>1$.

Let $\alpha\in ]0,1]$ be such that $\cH^{m-2+\alpha}(\sing_Q(R))>0$. 
By \cite[Theorem~3.6]{Sim} there exists a point $x\in \sing_Q (R)$ such
that $\sing_Q (R)$ has positive $\cH^{m-2+\alpha}_\infty$-upper density: i.e., assuming without loss of generality $x=0$ and $\partial R\res \B_1 = 0$, there exists $r_k\downarrow 0$ such that
\[
\lim_{k\to+\infty} \cH^{m-2+\alpha}_\infty \big(\sing_Q (R_{0, r_k})\cap \B_1\big)
= \lim_{k\to+\infty} \frac{\cH^{m-2+\alpha}_\infty \big(\sing_Q (R)\cap \B_{r_k}\big)}{r_k^{m-2+\alpha}} > 0\, .
\]
Up to a subsequence (not relabeled) we can assume that $R_{0,r_k}\to S$, with
$S$ a tangent cone. 
If $S$ is a multiplicity $Q$ flat plane, then we set $T:=R$ and we are done: indeed, \eqref{e:seq3} is satisfied by Theorem~\ref{t:strong_degiorgi}, because $0\in\sing (R)$ and $\|R\|\geq \cH^m \res \supp (R)$.

Assume therefore that $S$ is {\em not} an $m$-dimensional plane with multiplicity $Q$. 
Taking into account the convergence of the total variations for minimizing currents \cite[Theorem~34.5]{Sim} and the upper semicontinuity of
$\cH_\infty^{m-2+\alpha}$ under the Hausdorff convergence of compact sets,
we get
\begin{equation}\label{e:upper_density}
\cH^{m-2+\alpha}_\infty \big(\rD_Q (S) \cap \bar{\B}_1\big) \geq
\liminf_{k\to+\infty} \cH^{m-2+\alpha}_\infty \big(\rD_Q (R_{0, r_k})\cap \bar{\B}_1\big) > 0.
\end{equation}
We claim that \eqref{e:upper_density} implies
\begin{equation}\label{e:persistence sing}
\cH^{m-2+\alpha}_\infty (\sing_Q (S)) > 0.
\end{equation}
Indeed, if all points of $\rD_Q (S)$ are singular, then this follows from
\eqref{e:upper_density} directly.
Otherwise, $\reg_Q(S)$ is not empty and, hence, $\cH^m (D_Q (S) \cap \B_1)>0$.
In this case we can apply Lemma~\ref{l:regular cone} and infer that,
since $S$ is not regular, then
$\cH^{m-1} (\sing_Q (S))>0$ and \eqref{e:persistence sing} holds.

We can, hence, find $x\in \sing_Q (S)\setminus \{0\}$ and $r_k\downarrow 0$ such that
\[
\lim_{k\to+\infty} \cH^{m-2+\alpha}_\infty \big(\sing_Q (S_{x, r_k})\cap \B_1\big)
= \lim_{k\to+\infty} \frac{\cH^{m-2+\alpha}_\infty \big(\sing_Q (S)\cap \B_{r_k}(x))}{r_k^{m-2+\alpha}} > 0.
\]
Up to a subsequence (not relabeled), we can assume that $S_{x, r_k}$ converges to $S_1$.
Since $S_1$ is a tangent cone to the cone $S$ at $x\neq 0$, $S_1$ splits off a line,
i.e.~$S_1 = S_2\cone \a{\R v}$, for some area minimizing cone $S_2$ in $\R^{m-1+n}$
and some $v\in \R^{m+n}$ (cf.~the arguments in \cite[Lemma 35.5]{Sim}).
Since $m$ is, by assumption, the smallest integer for which Theorem \ref{t:main} fails, $\cH^{m-3+\alpha} (\sing (S_2)) = 0$ and, hence, $\cH^{m-2+\alpha} (\sing_Q (S_1))= 0$.
On the other hand, arguing as we did for \eqref{e:upper_density}, we have
\[
\cH^{m-2+\alpha}_\infty (\rD_Q (S_1)\cap \bar\B_1)\geq
\limsup_{k\to+\infty} \cH^{m-2+\alpha}_\infty (\rD_Q (S_{x, r_k})\cap \bar\B_1)  > 0.
\]
Thus $\reg_Q (S_1)\neq \emptyset$ and, hence, $\cH^m (D_Q (S_1))> 0$.
We can apply Lemma~\ref{l:regular cone} again and conclude that $S_1$ is an $m$-dimensional plane with multiplicity $Q$. 
Therefore, the proposition follows taking $T:= \tau_\sharp S$, with
$\tau$ the translation map $y\mapsto y-x$, and $\Sigma$
the tangent plane at $0$ to the original Riemannian manifold.
\end{proof}

\section{Intervals of flattening}\label{s:flattening}
For the sequel we fix the constant $c_s := \frac{1}{64\sqrt{m}}$ and notice that
$2^{-N_0} < c_s$,
where $N_0$ is the parameter introduced in \cite[Assumption 1.8]{DS4}.
It is always understood that
the parameters $\beta_2, \delta_2, \gamma_2, \kappa, C_e, C_h, M_0, N_0$
in \cite{DS4} are fixed in such a way that all the theorems and propositions
therein are applicable, cf.~\cite[Section 1.2]{DS4}. In particular, all constants which will depend upon
these parameters will be called {\em geometric} and denoted by $C_0$. On the contrary, we will highlight
the dependence of the constants upon the parameters introduced in this paper $p_1, p_2, \ldots$ by
writing $C= C(p_1, p_2, \ldots)$.

We recall also the notation introduced in \cite[Assumption 1.3]{DS4}. If $\Sigma \cap \bB_{7\sqrt{m}}$ has no boundary in $\bB_{7\sqrt{m}}$ and for any $p\in \Sigma \cap \bB_{7\sqrt{m}}$ there is a map $\Psi_p:T_p\Sigma\supset \Omega \to (T_p\Sigma)^\perp$ parametrizing it, then
$\mathbf{c} (\Sigma \cap \bB_{7\sqrt{m}}):= \sup_{p\in \Sigma\cap \bB_{7\sqrt{m}}} \|D\Psi_p\|_{C^{2,\eps_0}}$.  
Obviously these assumptions might fail for a general $\Sigma$ (in fact $\mathbf{c} (\Sigma\cap \bB_{7\sqrt{m}})$ need not be well-defined). However, having fixed a point $q\in \Sigma$, given its $C^{3,\eps_0}$ regularity, $\mathbf{c} (\iota_{q,r} (\Sigma)\cap \bB_{7\sqrt{m}})$ is well-defined whenever $r$ is sufficiently small and converges to $0$ as $r\downarrow 0$.
In particular, by Proposition~\ref{p:seq} and  simple rescaling arguments,
we assume in the sequel the following.

\begin{ipotesi}\label{i:H'} Let $\eps_3\in ]0, \eps_2[$. Under Assumption~\ref{assurdo}, there exist  $m, n, Q \geq 2$, $\alpha, \eta > 0$,  $T$ and $\Sigma$ for which:
\begin{itemize}
\item[(a)] there is a sequence of radii 
$r_k\downarrow 0$ as in Proposition \ref{p:seq};
\item[(b)] the following holds:
\begin{gather}
T_0\Sigma = \R^{m+\bar{n}} \times \{0\}, \quad 
\supp (\partial T)\cap \B_{6\sqrt{m}} = \emptyset,\quad 0 \in D_Q(T),\label{e:aggiuntive}\\
\|T\| (\B_{6\sqrt{m} r}) \leq
r^m \left( Q\,\omega_m (6\sqrt{m})^m + \eps_3^2\right) \quad \text{for all } r\in (0,1),\label{e:(2.1)-del-cm}\\
 \mathbf{c} (\Sigma\cap \bB_{7\sqrt{m}}) \leq \eps_3\, .\label{e:piattezza_sigma}
\end{gather}
\end{itemize}
\end{ipotesi}

\subsection{Defining procedure}\label{ss:flattening}
We set
\begin{equation}\label{e:def_R}
\mathcal{R}:=\big\{r\in ]0,1]: \bE (T, \B_{6\sqrt{m} r}) \leq \eps_3^2\big\}\,.
\end{equation}
Observe that, if $\{s_k\}\subset \mathcal{R}$
and $s_k\uparrow s$, then $s\in \mathcal{R}$.
We cover $\cR$ with a collection $\mathcal{F}=\{I_j\}_j$ of intervals
$I_j = ]s_j, t_j]$ defined as follows.
$t_0:= \max \{t: t\in \mathcal{R}\}$. Next assume, by induction, to have defined $t_j$ (and hence also
$t_0 > s_0\geq t_1 > s_1 \geq \ldots > s_{j-1}\geq t_j$) and consider the following objects:
\begin{itemize}
\item[-] $T_j := ((\iota_{0,t_{j}})_\sharp T)\res \bB_{6\sqrt{m}}$, $\Sigma_j := \iota_{0, t_j} (\Sigma) \cap \bB_{7\sqrt{m}}$; moreover, consider
for each $j$ an orthonormal system of coordinates so that, if we denote by $\pi_0$ the $m$-plane $\mathbb R^m\times \{0\}$, then $\bE (T_j, \bB_{6\sqrt{m}}, \pi_0) = \bE(T_j,\B_{6\sqrt{m}})$ (alternatively we can keep the system of coordinates fixed
and rotate the currents $T_j$).
\item[-] Let $\cM_j$ be the corresponding center manifold constructed
in \cite[Theorem~1.17]{DS4} applied to $T_j$ and $\Sigma_j $ with respect to the $m$-plane $\pi_0$;
the manifold $\cM_j$ is then the graph of a map $\phii_j: \pi_0 \supset [-4,4]^m \to \pi_0^\perp$,
and we set $\Phii_j (x) := (x, \phii_j (x)) \in \pi_0\times \pi_0^\perp$.
\end{itemize}
{ Then, we consider the Whitney decomposition $\sW^{(j)}$ of $[-4,4]^m \subset \pi_0$ as in \cite[Definition 1.10 \& Proposition~1.11]{DS4} (applied to $T_j$) and we define
\begin{equation}\label{e:s_j}
s_j := t_j\, \max\, \left(\{c_s^{-1} \ell (L) : L\in \sW^{(j)} \mbox{ and } c_s^{-1} \ell (L) \geq \dist (0, L)\} \cup \{0\} \right)\, .
\end{equation} 
We will prove below that $s_j/t_j <2^{-5}$. In particular this ensures that $[s_j, t_j]$ is a (nontrivial) interval. 
Next, if $s_j =0$ we stop the induction. Otherwise we let $t_{j+1}$ be the largest element
in $\mathcal{R}\cap ]0, s_j]$ and proceed as above. Note moreover the following simple consequence of \eqref{e:s_j}:
\begin{itemize}
\item[(Stop)] If $s_j >0$ and $\bar{r} := s_j/t_j$, then there is $L\in \sW^{(j)}$ with 
\begin{equation}\label{e:st}
\ell(L) = c_s\,\bar r\, \qquad \mbox{and} \qquad L\cap \bar B_{\bar r} (0, \pi_0)\neq \emptyset\, 
\end{equation}
(in what follows $B_r (p, \pi)$ and $\bar B_r (p,\pi)$ will denote the open and closed disks 
$\bB_r (p)\cap (p+\pi)$, $\bar{\bB}_r (p) \cap (p+\pi)$);
\item[(Go)] If $\rho > \bar{r} := s_j/t_j$, then
\begin{equation}\label{e:go}
\ell (L) < c_s \rho \qquad \mbox{for all } L\in \sW^{j(k)} \mbox{ with $L\cap B_\rho (0, \pi_0) \neq \emptyset$.}
\end{equation}
In particular the latter inequality is true for every $\rho\in ]0,3]$ if $s_j =0$.
\end{itemize}}

\subsection{First consequences}
The following is a list of easy consequences of the definition. Given two sets $A$ and $B$, we define their
{\em separation} as the number ${\rm sep} (A,B) := \inf \{|x-y|:x\in A, y\in B\}$.

\begin{proposition}\label{p:flattening}
Assuming $\eps_3$ sufficiently small, then the following holds:
\begin{itemize}
\item[(i)] $s_j < \frac{t_j}{2^5}$ and the family $\mathcal{F}$ is either countable
and $t_j\downarrow 0$, or finite and $I_j = ]0, t_j]$ for the largest $j$;
\item[(ii)] the union of the intervals of $\cF$ cover $\cR$,
and for $k$ large enough the radii $r_k$ in Assumption \ref{i:H'} belong to $\cR$;
\item[(iii)] if $r \in ]\frac{s_j}{t_j},3[$ and $J\in \mathscr{W}^{(j)}_n$ intersects
$B:= \p_{\pi_0} (\cB_r (p_j))$, with $p_j := \Phii_j(0)$,
then $J$ is in the
domain of influence $\mathscr{W}_n^{(j)} (H)$ (see \cite[Definition~3.3]{DS4})
of a cube $H\in \mathscr{W}^{(j)}_e$ with
\[
\ell (H)\leq 3 \, c_s\, r \quad \text{and}\quad 
\max\left\{{\rm sep}\, (H, B),  {\rm sep}\, (H, J)\right\}
\leq 3\sqrt{m}\, \ell (H) \leq \frac{3 r}{16};
\]
\item[(iv)] $\bE (T_j, \B_r )\leq C_0 \eps_3^2 \, r^{2-2\delta_2}$ for
every $r\in]\frac{s_j}{t_j},3[$.
\item[(v)] $\sup \{ \dist (x,\cM_j): x\in \supp(T_j) \cap \p^{-1}_j(\cB_r(p_j))\} \leq C_0\, (\bmo^j)^\frac{1}{2m} r^{1+\beta_2}$ for
every $r\in]\frac{s_j}{t_j},3[$, where
$\bmo^j := \max\{\mathbf{c}(\Sigma_j)^2 , \bE(T_j, \bB_{6\sqrt{m}})\}$. 
\end{itemize}
\end{proposition}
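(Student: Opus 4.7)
\emph{Strategy.} All five items hinge on a single auxiliary claim: whenever (Stop) triggers at step $j$, the extracted radius $\bar r$ obeys $\bar r<2^{-5}$. Once this is established, (i) is immediate (with $t_j\downarrow 0$ at geometric rate $2^{-5}$ in the infinite case), and it also cleans up the range on which (iii)--(v) are asserted. To prove the auxiliary claim I would use that $t_j\in\cR$, which together with Assumption~\ref{i:H'} delivers $\bE(T_j,\bB_{6\sqrt m})\le\eps_3^2$ and $\mathbf{c}(\Sigma_j)\le\eps_3$. A tilt-and-monotonicity estimate (already built into the construction of $\cM_j$ in \cite{DS4}) then propagates this smallness to every scale $r\in[2^{-5},3]$ with a fixed loss, so the pointwise stopping thresholds (excess, height, $L^2$-height) of \cite[Proposition~1.11]{DS4} are very far from being saturated by any cube $L$ of side comparable to~$1$ meeting $\bar B_r(0,\pi_0)$, once $\eps_3$ is small. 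Every such $L$ must therefore be refined further until $\ell(L)<c_s r$, which forbids (Stop) on $[2^{-5},3]$.

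Item (ii) is pure bookkeeping: $t_0=\max\cR$ handles $r=t_0$, and if $r\in\cR\cap]0,t_j]$ fails to lie in $I_j$ then $r\le s_j$, whence $r\le \max(\cR\cap]0,s_j])=t_{j+1}$; iterating, one concludes either by reaching a (Go) step (which yields $I_j=]0,t_j]$) or by exhaustion via $t_j\to 0$. The statement $r_k\in\cR$ for large $k$ is a direct reading of \eqref{e:seq1}: $\bE(T,\bB_{6\sqrt m\,r_k})=\bE(T_{0,r_k},\bB_{6\sqrt m})\to 0<\eps_3^2$.

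Item (iii) is a structural property of the Whitney decomposition. Each $J\in\sW^{(j)}_n$ lies in the domain of influence $\sW^{(j)}_n(H)$ of a unique $H\in\sW^{(j)}_e$ with $\ell(H)\ge\ell(J)$ and $\mathrm{sep}(H,J)\le 3\sqrt m\,\ell(H)$ by \cite[Definition~3.3]{DS4}. Since $p_j=\Phii_j(0)$ projects to $0\in\pi_0$ and $\phii_j$ is a small perturbation of a constant, $B=\p_{\pi_0}(\cB_r(p_j))$ is contained in $\bar B_{(1+C_0\eps_3)r}(0,\pi_0)$; combined with $J\cap B\neq\emptyset$, the separation bound places $H$ inside some $\bar B_{r'}(0,\pi_0)$ with $r'\le r(1+C_0\eps_3)+3\sqrt m\,\ell(H)\le 3$ for $\eps_3$ small. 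The negation of (Stop) at $r'$ then yields $\ell(H)<c_s r'$, a mildly implicit inequality which one solves (using $c_s=1/(64\sqrt m)$ to absorb the $3\sqrt m\,\ell(H)$ term) to conclude $\ell(H)\le 3c_s r$; plugging back delivers both separations $\le 3\sqrt m\,\ell(H)\le 3r/16$.

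Finally, (iv) and (v) are essentially direct quotations. For (iv), $t_j\in\cR$ fixes the base scale $r=1$ with $\bE(T_j,\bB_{6\sqrt m})\le\eps_3^2$, and the tilted excess decay proved in \cite{DS4}, valid precisely on the range $r\in]s_j/t_j,3[$ where no Whitney cube is too large, iterates into the claimed power law $r^{2-2\delta_2}$. For (v), the height bound is the main output of the center manifold construction \cite[Theorem~1.17]{DS4} rescaled to $T_j$, with the prefactor $(\bmo^j)^{1/(2m)}$ being exactly the one produced there. The genuinely nontrivial step is the auxiliary claim $\bar r<2^{-5}$, which demands a careful quantitative tracing through the stopping thresholds of \cite{DS4}; the remainder is combinatorics on the Whitney decomposition or direct quotation.
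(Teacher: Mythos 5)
Your proposal correctly handles (ii) and gets the rough shape of (iii)--(v), but the ``auxiliary claim'' you hinge everything on is proved by the wrong mechanism, and that is a genuine gap.

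You claim $\bar r<2^{-5}$ by arguing that smallness of excess at scale $t_j$ propagates to all $r\in[2^{-5},3]$, so the stopping thresholds of \cite[Proposition~1.11]{DS4} ``are very far from being saturated by any cube $L$ of side comparable to~$1$,'' forcing those cubes to refine. This misreads the construction: in \cite{DS4} the Whitney decomposition of $[-4,4]^m$ never contains cubes of side comparable to $1$. The construction \emph{begins} at scale $N_0+6$, so \emph{every} $L\in\sW^{(j)}$ satisfies $\ell(L)\le 2^{-N_0-6}$ structurally, independently of any excess estimate. The actual argument is then immediate: if (Stop) holds at $r$, some $L\in\sW^{(j)}$ has $\ell(L)\ge c_s r$, hence $c_s r\le 2^{-N_0-6}$, and since by hypothesis $2^{-N_0}<c_s$ this gives $r<2^{-6}<2^{-5}$. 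Your route would require a quantitative ``propagation'' lemma that the paper neither proves nor needs, and your premise (cubes of side $\sim 1$ exist) is false. Everything downstream of the auxiliary claim in your (i) is fine once this is fixed.

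On (iii): your formulation also skips a necessary case split. You write $r'\le r(1+C_0\eps_3)+3\sqrt m\,\ell(H)\le 3$, but if $r$ is close to $3$ this can overshoot $3$, and then (Stop) cannot even be invoked at scale $r'$. The paper handles this by treating $r>\tfrac14$ separately, where $\ell(H)\le 2^{-N_0-6}\le 2^{-5}c_s$ makes the conclusion trivial, and only runs the self-improving inequality for $r\le\tfrac14$ (where $\rho=2r+3\sqrt m\,\ell(H)<1$). Finally, for (v) you present the height bound as a straight quotation of \cite[Theorem~1.17]{DS4}, but the relevant statement is \cite[Corollary~2.2(ii)]{DS4} and it applies on the ball $\cB_r(p_j)$ \emph{precisely because} (Go) guarantees every cube $L\in\sW^{(j)}$ meeting $B_r(0,\pi_0)$ has $\ell(L)<c_s r$; without invoking the stopping condition the claim at those radii is not justified. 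Similarly (iv) needs the $r\ge 2^{-N_0}$ versus $r<2^{-N_0}$ dichotomy, which you elide.
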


\begin{proof}
We start by noticing that $s_j\leq \frac{t_j}{2^5}$ follows from the inequality $2^{-N_0} < c_s$ (cf. \cite[Assumption 1.8]{DS4})
because all cubes in the
Whitney decomposition have side-length at most $2^{-N_0-6}$ (cf. \cite[Proposition 1.11]{DS4}).
In particular, this implies that the inductive procedure either never stops,
leading to $t_j\downarrow 0$, or it stops because $s_j =0$ and
$]0, t_j]\subset \mathcal{R}$, thus proving (i).
The first part of (ii) follows straightforwardly from the choice of $t_{j+1}$, and
the last assertion holds from $\bE(T, \B_{6\sqrt{m}r_k}) \to 0$.

Regarding (iii), let $H\in \sW^{(j)}_e$ be as in \cite[Definition 3.3]{DS4} and choose $k\in \N\setminus \{0\}$
such that $\ell(H) = 2^k\, \ell(J)$. Observe that
$\|D \phii_j\|_{C^{2,\kappa}}\leq C_0 \eps_3$ by \cite[Theorem~1.17]{DS4}. If $\eps_3$ is sufficiently small,
we can assume 
\begin{equation}\label{e:comparable balls}
B_{r/2} (0, \pi_0) \subset B \subset B_{r} (0, \pi_0)\, .
\end{equation}
Now, by \cite[Corollary 3.2]{DS4}, ${\rm sep} (H, J) \leq 2 \sqrt{m} \ell (H)$ and 
\[
{\rm sep}\, (B, H) \leq {\rm sep}\, (H, J) + 2\sqrt{m} \ell (J) \leq 3 \sqrt{m}\, \ell (H).
\]
Both the inequalities claimed in  (iii) are then trivial when
$r > \frac{1}{4}$, because $\ell (H) \leq 2^{-N_0-6} \leq 2^{-5} c_s \leq 2^{-9}/\sqrt{m}$. Assume therefore $r\leq \frac{1}{4}$ and
note that $H$ intersects $B_{2r + 3\sqrt{m}\, \ell (H)}$. Let $\rho:= 2r + 3\sqrt{m} \ell (H)$. 
Observe that $2r < \rho<1$. By the definition of $s_j$, we have that
\[
\ell (H) < c_s \, \left( 2r + 3\sqrt{m}\, \ell (H) \right)
= 2 \,c_s\, r + \frac{3 \ell (H)}{16} . 
\]
Therefore, we conclude that $\ell (H) \leq 3 \,c_s r$ and 
${\rm sep} (H, B) \leq 9\sqrt{m}\, c_s r < 3 r /16$.

We now turn to (iv). If $r\geq 2^{-N_0}$, then obviously 
\[
\bE( T_j, \B_r) \leq 
(4\sqrt{m}\, 2^{N_0})^{m+2-2\delta_2} r^{2-2\delta_2} \bE (T_j, \B_{4\sqrt{m}}) \leq  (4\sqrt{m}\, 2^{N_0})^{m+2-2\delta_2} r^{2-2\delta_2} \eps_3^2 \, .
\]
Otherwise, let $k\geq N_0$ be the smallest natural number such that
$2^{-k+1}> r$ and let $L\in \mathscr{sW}^{(j), k}\cup \sS^{(j), k}$ be a cube so that $0\in L$ (cf. \cite[Definition~1.10]{DS4}, $\ell (H) = 2^{-k}$).
By \cite[Proposition~4.2(v)]{DS4}, $|p_L| \leq (\sqrt{m} + C_0 (\bmo^j)^{\sfrac{1}{2m}}) \leq 2 \sqrt{m} \ell (H)$ and so it follows easily that $\bB_r\subset \bB_L$. From condition (Go) we have $L\not\in \sW^{(j)}$.
Thus, by \cite[Proposition~1.11]{DS4}, we get
\[
\bE (T_j, \B_r) \leq C_0 \bE (T_j, \B_L) \leq C_0 \eps_3^2 \,r^{2-2\delta_2}.
\]
Finally, (v) follows from \cite[Corollary~2.2 (ii)]{DS4}, because
by (Go), for every
$r\in ]\frac{s_j}{t_j},3[$, every cube $L\in \sW^{(j)}$ which intersects $B_r (0, \pi_0)$ satisfies $\ell(L) <c_s r$.
\end{proof}

\section{Frequency function and first variations}\label{s:frequency}
Consider the following Lipschitz (piecewise linear) function $\phi:[0+\infty[ \to [0,1]$ given by
\begin{equation*}
\phi (r) :=
\begin{cases}
1 & \text{for }\, r\in [0,\textstyle{\frac{1}{2}}],\\
2-2r & \text{for }\, r\in \,\, ]\textstyle{\frac{1}{2}},1],\\
0 & \text{for }\, r\in \,\, ]1,+\infty[.
\end{cases}
\end{equation*}
For every interval of flattening $I_j = ]s_j, t_j]$,
let $N_j$ be the normal approximation of $T_j$
on $\cM_j$ in \cite[Theorem~2.4]{DS4}.

\begin{definition}[Frequency functions]\label{d:frequency}
For every $r\in ]0,3]$ we define: 
\[
\bD_j (r) := \int_{\cM^j} \phi\left(\frac{d_j(p)}{r}
\right)\,|D N_j|^2(p)\, dp\quad\mbox{and}\quad
\bH_j (r) := - \int_{\cM^j} \phi'\left(\frac{d_j (p)}{r}\right)\,\frac{|N_j|^2(p)}{d(p)}\, dp\, ,
\]
where $d_j (p)$ is the geodesic distance on $\cM_j$ between $p$ and $\Phii_j (0)$.
If $\bH_j (r) > 0$, we define the {\em frequency function}
$\bI_j (r) := \frac{r\,\bD_j (r)}{\bH_j (r)}$. 
\end{definition}

The following is the main analytical estimate of the paper, which allows us to
exclude infinite order of contact among the different sheets of a minimizing
current.

\begin{theorem}[Main frequency estimate]\label{t:frequency}
If $\eps_3$ is sufficiently small, then
there exists a geometric constant $C_0$ such that, for every
$[a,b]\subset [\frac{s_j}{t_j}, 3]$ with $\bH_j \vert_{[a,b]} >0$, we have
\begin{equation}\label{e:frequency}
\bI_j (a) \leq C_0 (1 + \bI_j (b)).
\end{equation}
\end{theorem}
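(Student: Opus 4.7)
The plan is to derive an almost-monotonicity statement for $\bI_j$ in the form of a differential inequality
\[
(\log(1+\bI_j))'(r)\geq -C_0\,e(r)\qquad\text{for a.e.~}r\in [a,b],
\]
with a nonnegative error density $e$ satisfying $\int_{s_j/t_j}^{3} e(\rho)\,d\rho \leq C_0$. Integrating across $[a,b]$ then gives $1+\bI_j(a)\leq e^{C_0}(1+\bI_j(b))$, which is precisely \eqref{e:frequency}. The additive constant on the right-hand side corresponds to the fact that $N_j$ is not truly $\D$-minimizing but only an almost-minimizer on $\cM_j$; if $N_j$ were genuinely Dir-minimizing one would instead recover the pure monotonicity $\bI_j(a)\leq \bI_j(b)$.

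The first technical block is to differentiate $\bD_j$ and $\bH_j$ in $r$. Since $\phi$ is Lipschitz with $\phi'$ supported in $(1/2,1)$, the coarea formula on $\cM_j$ expresses $\bD_j'(r)$ as a weighted integral of $|DN_j|^2$ concentrated on the geodesic annulus $\{r/2<d_j<r\}$, and $\bH_j'(r)$ as a combination of surface integrals of $|N_j|^2$ over the geodesic spheres $\{d_j=r\}$ and $\{d_j=r/2\}$. The second block is to invoke two variational identities for the multi-valued graph $\bT_{N_j}$ associated to $N_j$, derived from the variational formulas of \cite{DS2}. Testing the outer variation with the normal deformation $\phi(d_j/r)\,N_j$ (within $\Sigma_j$) produces an identity of the schematic form
\[
\bD_j(r) = \bE_j(r) + \mathrm{Err}_1(r),\qquad \bE_j(r) := -\frac{1}{r}\int_{\cM_j}\phi'(d_j/r)\,\langle N_j,\partial_\nu N_j\rangle\,dp,
\]
while testing the inner variation with the radial vector field $X:=\phi(d_j/r)\,d_j\,\nabla_{\cM_j}d_j$ produces an inequality of the schematic form
\[
-r\,\bD_j'(r)+(m-2)\bD_j(r) \geq 2\,\bF_j(r) + \mathrm{Err}_2(r),\qquad \bF_j(r):=-\frac{1}{r}\int_{\cM_j}\phi'(d_j/r)\,d_j\,|\partial_\nu N_j|^2\,dp,
\]
where $\partial_\nu$ denotes the derivative of $N_j$ along the unit radial field $\nabla_{\cM_j}d_j$. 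Cauchy--Schwarz on the nonnegative measure $-\phi'(d_j/r)\,dp$ yields $\bE_j(r)^2 \leq r^{-1}\bH_j(r)\,\bF_j(r)$. Substituting into the outer-variation identity, combining with the inner-variation inequality, and rearranging in the standard frequency pattern produces the pointwise bound $\bI_j'(r)\geq -C_0\,e(r)\,(1+\bI_j(r))$ for a nonnegative error density $e$.

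The main obstacle is showing that $\int_{s_j/t_j}^{3}e(\rho)\,d\rho\leq C_0$ uniformly in $j$, $a$, $b$. The errors $\mathrm{Err}_1,\mathrm{Err}_2$ have three origins: (i) the non-flatness of $\cM_j$, controlled via $\|D\phii_j\|_{C^{2,\kappa}}\leq C_0\eps_3$ from \cite[Theorem~1.17]{DS4}; (ii) the ambient curvature of $\Sigma_j$, bounded by $\eps_3$ through \eqref{e:piattezza_sigma}; and (iii) the discrepancy between $T_j$ and the graph $\bT_{N_j}$, quantitatively controlled by the pointwise and integral estimates of the normal approximation \cite[Theorem~2.4]{DS4} in terms of high powers of $|DN_j|$ and of the excess. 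The crucial feature of intervals of flattening is that, for every $r\in\,]s_j/t_j,3[$, Proposition~\ref{p:flattening}(iv)--(v) supplies the scale-invariant decays $\bE(T_j,\bB_r)\leq C_0\eps_3^2\,r^{2-2\delta_2}$ and $\sup\{\dist(x,\cM_j):x\in\supp(T_j)\cap \p_j^{-1}(\cB_r(p_j))\}\leq C_0(\bmo^j)^{1/(2m)}r^{1+\beta_2}$. Provided the parameters $\delta_2,\beta_2,\gamma_2$ of \cite{DS4} were chosen sufficiently small, feeding these decays into $\mathrm{Err}_1,\mathrm{Err}_2$ delivers $e(\rho)\leq C_0\,\rho^{\gamma-1}$ for some $\gamma>0$, which is integrable down to $s_j/t_j$ uniformly in $j$. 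Carrying out this quantitative step is the technical content of the forthcoming Sections \ref{s:variationII} and \ref{s:freq_bound}; the present plan reduces Theorem~\ref{t:frequency} to those estimates.
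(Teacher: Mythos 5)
Your plan reproduces the paper's overall architecture faithfully: pass to the logarithmic derivative of the frequency, produce an outer-variation identity relating $\bD_j$ to the boundary pairing $\bE_j$, an inner-variation identity relating $\bD_j'$ to the radial energy, apply Cauchy--Schwarz on the nonnegative measure $-\phi'(d_j/r)\,dp$, and push the errors into a quantity whose integral is controlled uniformly in $j$ thanks to the scale-invariant decays from Proposition~\ref{p:flattening} and \cite[Theorem~2.4]{DS4}. That is exactly what the paper does.

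However, the claim that the resulting error density satisfies a \emph{pointwise} bound $e(\rho)\leq C_0\,\rho^{\gamma-1}$ is not accurate, and this is where a literal attempt to execute your plan would get stuck. The error term coming from the inner variation contains a contribution proportional to $\bD_j(r)^{\gamma_3}\,\bD_j'(r)$; after dividing by $\bD_j$ to form $(\log\bI_j)'$ this produces $\bD_j(r)^{\gamma_3-1}\bD_j'(r)$, which cannot be bounded pointwise by a power of $r$ because $\bD_j'$ has no pointwise upper bound. Integrability is instead obtained structurally: this term is a total derivative of $\bD_j^{\gamma_3}/\gamma_3$, and the a priori rough bound $\bD_j(r)\leq C_0\,\bmo\,r^{m+2-2\delta_2}$ (the analogue of the paper's Remark~\ref{r:tough}) makes the integrated contribution a geometric constant. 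Likewise, the outer variation produces a contribution $\bSigma_j(r)\bD_j'(r)/\bD_j(r)^2$, which is handled by integration by parts together with the Poincar\'e-type estimate $\bSigma_j+r\bSigma_j'\leq C_0\,r^2\bD_j$; again no power-law pointwise bound on the density is available. Only the remaining, ``benign'' contributions reduce to a power $r^{\gamma_3 m-1}$. So the correct formulation is that $-(\log\bI_j)'(r)\leq C_0 r^{\gamma_3 m-1} + (\text{total derivative of a bounded function}) + (\text{term handled by integration by parts})$, and the total integral is bounded uniformly. You should also restrict, as the paper does, to the maximal subinterval where $\bI_j>1$ before running the differential argument, so that the first-variation error estimates (which assume $\bI_j\geq 1$, needed to absorb $\bH_j$ by $r\bD_j$) actually apply; on the complementary region there is nothing to prove.
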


To simplify the notation, in this section we drop the index $j$ and 
omit the measure $\cH^m$ in the integrals over regions of
$\cM$.
The proof exploits four identities collected in Proposition \ref{p:variation},
which will be proved in the next sections.

\begin{definition}\label{d:funzioni_ausiliarie}
We let $\de_{\hat r}$ denote the derivative with respect to arclength along geodesics starting at $\Phii(0)$. We set
\begin{align}
&\qquad\qquad\qquad\bE (r) := - \int_\cM \phi'\left(\textstyle{\frac{d(p)}{r}}\right)\,\sum_{i=1}^Q \langle
N_i(p), \de_{\hat r} N_i (p)\rangle\, dp\,  ,\\
&\bG (r) := - \int_{\cM} \phi'\left(\textstyle{\frac{d(p)}{r}}\right)\,d(p) \left|\de_{\hat r} N (p)\right|^2\, dp
\quad\mbox{and}\quad
\bSigma (r) :=\int_\cM \phi\left(\textstyle{\frac{d(p)}{r}}\right)\, |N|^2(p)\, dp\, .
\end{align}
\end{definition}

\begin{remark}\label{r:tough}Observe that all these functions of $r$ are absolutely continuous
and, therefore, classically differentiable at almost every $r$.
Moreover, the following rough estimate easily follows from
\cite[Theorem~2.4]{DS4} and the condition (Go):
\begin{align}\label{e:rough}
\bD(r) \leq { \int_{\cB_r (\Phii (0))} |DN|^2 \leq }\; C_0\, \bmo\, r^{m+2-2\delta_2} \quad \mbox{for every}\quad r\in\left]\textstyle{\frac{s}{t}},3\right[.
\end{align}
Indeed, since $N$ vanishes identically on the set $\mathcal{K}$ of \cite[Theorem~2.4]{DS4}, it suffices to sum
the estimate of \cite[Theorem~2.4, (2.3)]{DS4} over all the different cubes $L$ (of the corresponding Whitney decomposition)
for which $\Phii (L)$ intersects the geodesic ball $\cB_r$. 
\end{remark}

\begin{proposition}[First variation estimates]\label{p:variation}
For every $\gamma_3$ sufficiently small there is a constant $C = C (\gamma_3)>0$ such that, if
$\eps_3$ is sufficiently small, $[a,b]\subset [\frac{s}{t}, 3]$ and $\bI \geq 1$ on $[a,b]$, then the following
inequalities hold for a.e. $r\in [a,b]$:
\begin{gather}
\left|\bH' (r) - \textstyle{\frac{m-1}{r}}\, \bH (r) - \textstyle{\frac{2}{r}}\,\bE(r)\right|\leq  C \bH (r), \label{e:H'}\\
\left|\bD (r)  - r^{-1} \bE (r)\right| \leq C \bD (r)^{1+\gamma_3} + C \eps_3^2 \,\bSigma (r),\label{e:out}\\
\left| \bD'(r) - \textstyle{\frac{m-2}{r}}\, \bD(r) - \textstyle{\frac{2}{r^2}}\,\bG (r)\right|\leq
C \bD (r) + C \bD (r)^{\gamma_3} \bD' (r) + C r^{-1}\bD(r)^{1+\gamma_3},\label{e:in}\\
\bSigma (r) +r\,\bSigma'(r) \leq C  \, r^2\, \bD (r)\, \leq C r^{2+m} \eps_3^{2}.\label{e:Sigma1}
\end{gather}
\end{proposition}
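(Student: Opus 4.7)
The plan is to derive the four estimates by combining the outer and inner variation formulas for multiple valued maps of \cite[Sections 3--4]{DS2}, applied to the normal approximation $N = N_j$, with the curvature bound $\|D\Phii\|_{C^{2,\kappa}} \le C_0\eps_3$ from \cite[Theorem 1.17]{DS4} and the Lipschitz and $L^\infty$ estimates for $N$ from \cite[Theorem 2.4]{DS4}. Each identity is an exact equality in the model case where $\cM$ is a flat $m$-disk and $N$ is a Dir-minimizing $Q$-valued map; the right-hand sides encode three kinds of error: (i) contributions of the second fundamental forms of $\cM$ and $\Sigma$, each of size $O(\eps_3)$; (ii) the super-quadratic Taylor coefficients of the area integrand expanded about the graph of $N$ over $\cM$; and (iii) the defect from $N$ being only an almost-minimizer of Dirichlet energy. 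The assumption $\bI \ge 1$ on $[a,b]$ ensures $r\bD(r)\ge \bH(r)$, which lets such errors be moved between the $\bH$-side and the $\bD$-side.

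I would first dispose of \eqref{e:Sigma1}. Since $N$ vanishes on the set $\cK$ of \cite[Theorem 2.4]{DS4}, and since Proposition \ref{p:flattening}(iii) and condition (Go) imply that $\cK$ covers a definite fraction of every geodesic ball $\cB_r \subset \cM$ with $r \in \,]s/t,3[$, a Poincar\'e inequality on $\cM$ yields $\bSigma(r) \le C r^2\bD(r)$. The bound on $r\bSigma'(r)$ follows from $\bSigma'(r) = \frac{2}{r^2}\int_{\{r/2 < d < r\}} d\,|N|^2\,dp \le \frac{2}{r}\bSigma(r)$, while the final inequality $\bD(r) \le C \eps_3^2 r^{m+2-2\delta_2}$ is Remark \ref{r:tough}. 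For \eqref{e:H'} I rewrite $\bH(r) = 2\int_{\{r/2<d<r\}} |N|^2/d\,dp$ and differentiate via coarea in the geodesic radial coordinate; in the flat case this yields $\bH'(r) = \frac{m-1}{r}\bH(r) + \frac{2}{r}\bE(r)$ identically, and on $\cM$ the deviation of the volume element in geodesic polar coordinates centred at $\Phii(0)$ from the Euclidean one, namely $1 + O(\|D\Phii\|_{C^2}^2 d^2) = 1 + O(\eps_3^2)$, produces the stated error $C\bH(r)$.

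The identities \eqref{e:out} and \eqref{e:in} are obtained by inserting, respectively, the outer-variation vector field $\phi(d/r)\,N$ and the inner-variation field $\phi(d/r)\nabla d$ into $\delta T = 0$ (valid since $T$ is area minimizing), and comparing with the analogous first variations of the Dirichlet energy of $N$, as in \cite[Section 3]{DS1}. The flat/linear skeletons $\bD = r^{-1}\bE$ and $\bD' = \frac{m-2}{r}\bD + \frac{2}{r^2}\bG$ (holding exactly for Dir-minimizers) are then corrected by: the $O(\eps_3^2)\bSigma(r)$ contribution of the second fundamental form of $\Sigma$ in \eqref{e:out}; the super-quadratic area-functional errors, which by interpolating the bound $\|DN\|_\infty \le C_0\bmo^{\gamma_3}$ (from \cite[Theorem 2.4]{DS4} together with (Go) and Proposition \ref{p:flattening}) against the $L^2$ quantities $\bD$ and $\bD'$ yield the $\bD^{1+\gamma_3}$ and $\bD^{\gamma_3}\bD'$ summands; and the lower-order $C\bD$ and $Cr^{-1}\bD^{1+\gamma_3}$ terms coming from the curvature of $\cM$ and the mismatch between $\cH^m\res\cM$ and the mass of the graph of $N$.

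The main obstacle is estimate \eqref{e:in}. Unlike the outer variation, differentiating $\phi(d/r)$ along the deformation generated by $\nabla d$ moves mass across the sphere $\{d = r\}$ and produces an extra contribution of the same form as $\bD'(r)$ itself, which cannot be absorbed into $\bD(r)$. The crucial observation is that this contribution carries a prefactor super-linear in $|DN|$, hence with multiplicative smallness $\bD^{\gamma_3} \lesssim \eps_3^{2\gamma_3}$ coming from the $L^\infty$ bound on $DN$; this is exactly what produces the $C\bD^{\gamma_3}\bD'$ summand on the right-hand side of \eqref{e:in}. Isolating every error term proportional to $\bD'$ and verifying that each carries this multiplicative smallness is the most delicate step, and it is what ultimately allows the identity to be integrated in Theorem \ref{t:frequency} into the absolute frequency bound $\bI_j(a)\le C_0(1+\bI_j(b))$.
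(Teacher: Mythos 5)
Your overall outline is in the right family: the paper does indeed derive \eqref{e:H'}, \eqref{e:out}, \eqref{e:in} from the outer and inner first-variation formulas of \cite[Theorems~4.2--4.3]{DS2} applied to the push-forward current $\bT_F$, with errors from curvature, super-quadratic area terms, and the gap between $\bT_F$ and $T$. But there are several concrete gaps and errors that a verification would catch.

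First, the inner variation field is wrong as stated. The paper takes $Y(p) = \frac{d(p)}{r}\phi\left(\frac{d(p)}{r}\right)\frac{\partial}{\partial\hat r}$; the extra factor of $d(p)$ is essential, because $\bD'(r) = -\int \frac{d(p)}{r^2}\phi'\left(\frac{d(p)}{r}\right)|DN|^2$ and $\bG(r)$ both carry a weight of $d$, so the radial field $\phi(d/r)\nabla d$ alone does not produce the identity $\bD' - \frac{m-2}{r}\bD - \frac{2}{r^2}\bG = \text{errors}$. Second, your justification of $\bSigma(r)\le Cr^2\bD(r)$ — that the coincidence set $\cK$ covers a definite fraction of every geodesic ball — is false (near a singular point $\cK$ may be empty on a small ball, and nothing in (Go) or Proposition~\ref{p:flattening}(iii) gives such a density lower bound). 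The paper's Lemma~\ref{l:poincare'} instead applies the standard Poincaré inequality to the compactly supported function $\phi(d/r)|N|^2$, giving $\bSigma(r)\le C_0 r^2\bD(r) + C_0 r\bH(r)$, and then uses $\bI\ge 1$ (i.e., $\bH\le r\bD$) to absorb the $\bH$ term; the hypothesis $\bI\ge 1$ plays an essential role here which your version does not use.

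Third, and most importantly, the mechanism producing the $\bD^{\gamma_3}$ and $\bD^{1+\gamma_3}$ factors in \eqref{e:out}--\eqref{e:in} is much more than a uniform $L^\infty$ bound on $DN$. The Lipschitz estimate of \cite[Theorem~2.4]{DS4} is $\Lip(N|_{\cU_i})\le C_0\bmo^{\gamma_2}\ell_i^{\gamma_2}$, local in each Whitney region with a scale-dependent right-hand side. Converting this into a \emph{global} factor $\bD(r)^{\gamma_3}$ requires the combinatorial construction of Section~\ref{s:variationII}: the family of pairwise disjoint balls $\cB^i$ at scale $\ell_i$, well separated from $\partial\cB_r$ (Lemma~\ref{l:covering}), the lower Dirichlet/$L^2$ bounds on each $\cB^i$ from \cite[Propositions 3.1 and 3.4]{DS4}, and Lemma~\ref{l:exponent_match}, which packages these into $\sum_i\bmo\ell_i^{m+2+\gamma_2/4}\lesssim\bD + r\bD'$ and $\sup_i\bmo^t\ell_i^t\lesssim\bD^{\gamma(t)}$. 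Without this decomposition and the lower bounds coming from the stopping conditions in the center-manifold construction, the super-quadratic errors cannot be shown to be controlled by powers of $\bD$. Finally, you do not address the error terms $\mathrm{Err}_4$ and $\mathrm{Err}_5$, which arise because $\bT_F$ is not the minimizing current itself, and which need the mass-difference estimate \eqref{e:errori_massa} and the decomposition $\delta T(X) = \delta T(X^\perp)$; these are not lower order and require their own argument, particularly $\mathrm{Err}_5^i$ where the orthogonality of $X_i$ to the mean curvature of $\cM$ has to be exploited at the linear order.
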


We assume for the moment the proposition and prove the theorem.

\begin{proof}[Proof of Theorem \ref{t:frequency}.]
Set $\bOmega(r) := \log \big(\max \{\bI(r), 1\}\big)$. Fix a $\gamma_3>0$ and an $\eps_3$ sufficiently small
so that the conclusion
of Proposition \ref{p:variation}. We can thus treat the corresponding constants in the inequalities as geometric ones,
but to simplify the notation we keep denoting them by $C$.

To prove \eqref{e:frequency} it is enough to show $\bOmega (a) \leq C + \bOmega (b)$.
If $\bOmega (a) = 0$, then there is nothing to prove. If $\bOmega (a)>0$, let $b'\in ]a,b]$
be the supremum of $t$ such that $\bOmega>0$ on $]a,t[$.
If $b'<b$, then $\bOmega (b')=0 \leq \bOmega (b)$. Therefore, by possibly
substituting $]a,b[$ with $]a,b'[$, we can assume that $\bOmega >0$, i.e.~$\bI >1$, on $]a,b[$.
By Proposition~\ref{p:variation}, if $\eps_3$ is sufficiently small, then 
\begin{equation}\label{e:out2}
\frac{\bD (r)}{2} 
\stackrel{\eqref{e:out}\,\&\,\eqref{e:Sigma1}}{\leq}
\frac{\bE (r)}{r} 
\stackrel{\eqref{e:out}\,\&\,\eqref{e:Sigma1}}{\leq}
2\, \bD (r),
\end{equation}
from which we conclude that $\bE>0$ over the interval $]a, b'[$.
Set for simplicity $\bF(r) := \bD(r)^{-1} - r \bE(r)^{-1}$, and compute
\[
- \bOmega'  (r) = \frac{\bH' (r)}{\bH(r)} - \frac{\bD'(r)}{\bD(r)} - \frac{1}{r} 
\stackrel{\eqref{e:out}}{=} \frac{\bH'(r)}{\bH(r)} - \frac{r\bD'(r)}{\bE(r)} - \bD'(r) \bF (r) - \frac{1}{r}.
\]
Again by Proposition~\ref{p:variation}:
\begin{equation}\label{e:pezzo_1}
\frac{\bH'(r)}{\bH(r)} \stackrel{\eqref{e:H'}}{\leq} \frac{m-1}{r} + C + \frac2r\,\frac{\bE(r)}{\bH(r)},
\end{equation}
\begin{equation}\label{e:E denominatore}
|\bF(r)| \stackrel{\eqref{e:out}}{\leq} C \, \frac{r (\bD (r)^{1+\gamma_3} + \bSigma (r))}{\bD (r)\,\bE (r)} \stackrel{\eqref{e:out2}}{\leq} C \, \bD (r)^{\gamma_3-1}+ C\,\frac{\bSigma (r)}{\bD (r)^2},
\end{equation}
\begin{align}\label{e:pezzo_2}
- \frac{r\bD'(r)}{\bE(r)} &\stackrel{\eqref{e:in}}{\leq} \left(C - \frac{m-2}{r}\right) \frac{r\bD(r)}{\bE(r)} - \frac2r \, \frac{\bG(r)}{\bE(r)}+  C\frac{r \bD(r)^{\gamma_3} \bD' (r) + \bD (r)^{1+\gamma_3}}{\bE(r)}\nonumber\\
&\leq C - \frac{m-2}{r} 
+ \frac{C}{r} \bD(r) |\bF(r)|
- \frac2r \, \frac{\bG(r)}{\bE(r)}
+  C \bD(r)^{\gamma_3-1} \bD' (r) + C\frac{\bD (r)^{\gamma_3}}{r}\notag\\
& \stackrel{\eqref{e:Sigma1},\, \eqref{e:E denominatore}\,\& \,\eqref{e:rough}}{\leq}C - \frac{m-2}{r} 
- \frac2r \, \frac{\bG(r)}{\bE(r)}
+  C \bD(r)^{\gamma_3-1} \bD' (r) + C\,r^{\gamma_3\,m -1}.
\end{align}
By Cauchy-Schwartz, we have
\begin{equation}\label{e:cauchy-schwartz}
\frac{\bE(r)}{r\bH(r)}\leq \frac{\bG(r)}{r \bE(r)}.
\end{equation}
Thus, by \eqref{e:rough}, \eqref{e:pezzo_1},
\eqref{e:pezzo_2} and  \eqref{e:cauchy-schwartz}, we conclude
\begin{align}
- \bOmega' (r) &\leq C +C\,r^{\gamma_3\,m -1}+
C r \bD(r)^{\gamma_3-1} \bD' (r) 
- \bD'(r) \bF(r)\notag\\
& \stackrel{\eqref{e:E denominatore}}{\leq}C\,r^{\gamma_3\,m -1}+
C\bD(r)^{\gamma_3-1} \bD'(r) + C \frac{\bSigma (r) \bD' (r)}{\bD(r)^2}.\label{e:grosso} 
\end{align}
Integrating \eqref{e:grosso} we conclude:
\begin{align}
\bOmega (a) - \bOmega (b) &\leq C + C \left(\bD(b)^{\gamma_3} - \bD(a)^{\gamma_3}\right) 
+ C \left[ \frac{\bSigma (a)}{\bD (a)} - \frac{\bSigma (b)}{\bD (b)} + \int_a^b \frac{\bSigma' (r)}{\bD (r)}\, dr\right]
\stackrel{\eqref{e:Sigma1}}{\leq} C.\qquad\qedhere\notag
\end{align}
\end{proof}

The rest of the section is devoted to the proof of Proposition~\ref{p:variation}.

\subsection{Estimates on $\bH^\prime$: proof of \eqref{e:H'}} Set $q:= \Phii (0)$.
Let $\exp: B_3\subset T_q\cM \to \cM$ be the exponential map
and $\J \exp $ its Jacobian.
Note that $d (\exp (y),q)=|y|$ for every $y \in B_3$.
By the area formula, setting $y =rz$, we can write
$\bH$ in the following way:
\begin{align*}
\bH (r) & 
= - r^{m-1}\int_{T_q\cM} \frac{\phi'\left(|z|\right)}{|z|}\,|N|^2(\exp (r z))\,\J \exp (r z)\, dx\, .
\end{align*}
Therefore, differentiating under the integral sign, we easily get \eqref{e:H'}:
\begin{align*}
\bH' (r) = {} & - (m-1)\,r^{m-2}\int_{T_q\cM} \frac{\phi'(|z|)}{|z|}\,|N|^2(\exp (r z))\,\J \exp (r z)\, dz
\notag\\
& - 2 \, r^{m-1}\int_{T_q\cM} \phi'(|z|)\,\sum_i \left\langle N_i (\exp (r z)), \de_{\hat{r}} N_i (\exp (r z))\right\rangle\,\J \exp (r z)\, dz
\notag\\
& - r^{m-1}\int_{T_q \cM} \frac{\phi'(|z|)}{|z|}\,|N|^2(\exp (r z))\,\frac{d}{d r} \J \exp (r z)\, dz
\notag\\
= {} & \frac{m-1}{r}\, \bH (r) + \frac{2}{r}\, \bE (r) +O(1)\, \bH (r),
\end{align*}
where we used that $\frac{d}{d r} \J \exp (r\,z) = O(1)$, because $\cM$ is a $C^{3,\kappa}$ submanifold
and hence $\exp$ is a $C^{2, \kappa}$ map (see Proposition \ref{p:exp}). \qed

\subsection{$\bSigma$ and $\bSigma^\prime$: proof of \eqref{e:Sigma1}}
We show the following more precise estimates.

\begin{lemma}\label{l:poincare'} There exists a dimensional constant $C_0>0$ such that
\begin{equation}\label{e:poincare'}
\bSigma (r) \leq C_0\, r^2\,\bD (r) + C_0 r \bH (r) \quad \text{and} \quad 
\bSigma' (r) \leq C_0 \bH (r),
\end{equation}
\begin{equation}\label{e:L2_pieno}
\int_{\mathcal{B}_r (q)} |N|^2 \leq C_0\,\bSigma (r) + C_0\,r\,\bH(r)\, ,
\end{equation}
\begin{equation}\label{e:Dirichlet_pieno}
\int_{\mathcal{B}_r (q)} |DN|^2 \leq C_0\,\bD (r) + C_0\,r \bD' (r).
\end{equation}
In particular, if $\bI \geq 1$, then \eqref{e:Sigma1} holds and
\begin{equation}\label{e:L2_pieno2}
\int_{\mathcal{B}_r (q)} |N|^2 \leq C_0\,r^2\bD (r).
\end{equation}
\end{lemma}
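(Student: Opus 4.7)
The lemma naturally splits into three groups: three elementary annular co-area identities (\eqref{e:L2_pieno}, \eqref{e:Dirichlet_pieno} and $\bSigma'\leq C_0\bH$); a single Poincaré-type inequality (the first half of \eqref{e:poincare'}); and the combination with the hypothesis $\bI\geq 1$ delivering \eqref{e:Sigma1} and \eqref{e:L2_pieno2}.

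For the co-area bounds, I would exploit directly that $-\phi'(\cdot) = 2\,\mathbf{1}_{(1/2,1)}$ while $\phi\equiv 1$ on $[0,1/2]$. Differentiating $\bD$ under the integral sign gives
\[
\bD'(r) = \int_{\{d\in(r/2,r)\}} \tfrac{2d}{r^2}\,|DN|^2\,dp \;\geq\; \tfrac{1}{r}\int_{\{d\in(r/2,r)\}}|DN|^2\,dp,
\]
while $\int_{\cB_{r/2}(q)}|DN|^2\leq \bD(r)$ because $\phi = 1$ there. Summing over the decomposition $\cB_r(q) = \cB_{r/2}(q)\sqcup\{d\in(r/2,r)\}$ yields \eqref{e:Dirichlet_pieno}. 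The same computation with $|DN|^2$ replaced by $|N|^2$, combined with the obvious $\bH(r)\geq (2/r)\int_{\{d\in(r/2,r)\}}|N|^2\,dp$, delivers both \eqref{e:L2_pieno} and $\bSigma'(r)\leq (2/r)\int_{\{d\in(r/2,r)\}}|N|^2\,dp\leq \bH(r)$.

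The substantive step is the Poincaré inequality $\bSigma\leq C_0 r^2\bD + C_0 r\bH$. The $Q$-valued nature of $N$ is neutralized by passing to the scalar function $u := |N|^2$, which is nonnegative Lipschitz with $|\nabla u|\leq 2|N|\,|DN|$ a.e. I plan to apply the divergence theorem with the radial vector field $X := \nabla(d^2/2)$ on $\cM$: since $\cM$ is $C^{3,\eps_0}$ with second fundamental form of order $\eps_3$ (by \cite[Theorem~1.17]{DS4}), geodesic normal coordinates at $q$ give $|X|=d$, $\nabla d\cdot X = d$, and $\dv X = m + O(\eps_3^2)$ on $\cB_3(q)$. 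Integration by parts then yields
\[
(m + O(\eps_3^2))\,\bSigma(r) \;=\; -\int_\cM \phi'(d/r)\,\tfrac{d}{r}\,u\,dp \;-\;\int_\cM \phi(d/r)\,\langle\nabla u, X\rangle\,dp.
\]
The first right-hand term is bounded by $r\bH(r)$ because $d\leq r$ on the support of $\phi'(d/r)$ and $-\phi'\leq 2$; the second, via Young's inequality $2d\,|N|\,|DN|\leq \tfrac12|N|^2 + 2d^2|DN|^2$, is at most $\tfrac12\bSigma(r) + 2r^2\bD(r)$. Absorbing $\bSigma$ on the left (here $m\geq 2$ is used) and choosing $\eps_3$ small completes the Poincaré estimate.

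Finally, the two conclusions are mechanical. The assumption $\bI\geq 1$ reads $\bH\leq r\bD$, so $r\bH\leq r^2\bD$; plugging into the Poincaré inequality gives $\bSigma\leq Cr^2\bD$, and then $r\bSigma'\leq Cr\bH\leq Cr^2\bD$, while the rough bound \eqref{e:rough} delivers the $\eps_3^2 r^{m+2}$ tail of \eqref{e:Sigma1}. Similarly \eqref{e:L2_pieno2} follows by feeding these bounds into \eqref{e:L2_pieno}. The real work is concentrated in the Poincaré step; the main subtleties to verify are that the scalar reduction $|\nabla |N|^2|\leq 2|N|\,|DN|$ is valid for the Almgren-type $Q$-valued Sobolev $N$, and that the curvature-induced error in $\dv X$ is genuinely of order $\eps_3^2$ and hence absorbable.
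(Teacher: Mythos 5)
Your co-area bounds for \eqref{e:L2_pieno}, \eqref{e:Dirichlet_pieno} and $\bSigma'\le C_0\bH$, and your deduction of \eqref{e:Sigma1} and \eqref{e:L2_pieno2} from $\bI\ge 1$ (using $\bH\le r\bD$ and \eqref{e:rough}), agree line by line with the paper. The one genuine divergence is in the Poincar\'e step, which is the heart of the lemma. The paper simply observes that $\psi:=\phi(d/r)|N|^2$ is Lipschitz with compact support in $\cB_r(q)$ and invokes the $L^1$ Poincar\'e inequality $\int_\cM\psi\le Cr\int_\cM|D\psi|$ (the constant being controlled since $\cM$ is uniformly close to flat), then expands $|D\psi|$ using $\phi'$ and the pointwise bound $|D|N|^2|\le 2|N|\,|DN|$, and closes with Young. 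You instead re-derive a Poincar\'e inequality from scratch via an integration-by-parts (Rellich/stress--energy) identity with the radial field $X=\nabla(d^2/2)$, which forces you to control $\dv_\cM X = m + O(\eps_3^2)$. Both routes are correct. The paper's is lighter because it black-boxes a standard scalar inequality and avoids expanding the divergence of a radial vector field; yours is more self-contained and produces explicit constants, but it needs the regularity of the exponential map (Proposition~\ref{p:exp}) and the curvature estimate for $\dv_\cM X$ --- which, as it happens, the paper does compute anyway in \eqref{e:divY} for the inner variations, so nothing extra is actually at stake. The two ``subtleties'' you flag are both legitimate and both resolve favorably: the $Q$-valued chain rule $|D|N|^2|\le 2|N|\,|DN|$ is also used (implicitly) in the paper's bound on $|D\psi|$, and the $O(\eps_3^2)$ error in $\dv_\cM X$ follows from $|A_\cM|\lesssim\eps_3$ via the Gauss equations; for absorption you only need the error $< m-\tfrac12$, so in fact $O(\eps_3)$ or even a bounded geometric $O(1)$ strictly less than $m-\tfrac12$ would suffice, and the hypothesis $m\ge 2$ is not actually essential here.
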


\begin{proof} To simplify the notation we drop the subscript $_0$ from the geometric constants.
Observe that $\psi (p) := \phi \big(\frac{d (p)}{r}\big) |N|^2 (p)$ is a Lipschitz function
with compact support in $\bB_r (q)$. 
We therefore use the Poincar\'e inequality:
$\bSigma (r) = \int_\cM \psi \leq C r \int_\cM |D\psi|$
(the constant $C$ depends on the smoothness of $\cM$).
We compute
\begin{align*}
\bSigma (r) &\leq - C \int_\cM \phi' (r^{-1} d (p)) |N|^2 (p) + C\, r \int_\cM \phi (r^{-1}d (p)) |N| |DN|\\
&\leq Cr \bH (r) + C \bSigma (r)^{\sfrac{1}{2}}\,\big(r^2\bD (r)\big)^{\sfrac{1}{2}} \leq 
C r\bH (r) + \textstyle{\frac{1}{2}} \bSigma (r) + C\, r^2 \bD (r),
\end{align*}
which gives the first part of \eqref{e:poincare'}.
The remaining inequality is straightforward:
\begin{align*}
\bSigma '(r) & = - \int_{\cM} \frac{d(p)}{r^2} \,\phi'\left(\frac{d(p)}{r}\right)\,|N|^2(p)
\leq C \bH (r)\, .
\end{align*}
Since $\phi'=0$ on $]0, \frac{1}{2}[$ and $\phi'= -2$ on $]\frac{1}{2}, 1[$, we easily
deduce
\[
\int_{\cB_r (q)\setminus \cB_{r/2}(q)} |N|^2 \leq r\, \bH(r),
\]
\[
r \bD' (r) = - \int \frac{d (p)}{r} \phi' \left(\frac{d (p, q)}{r}\right) |DN|^2 \geq 
\int_{\cB_r (q)\setminus \cB_{r/2}(q)} |DN|^2\, .
\]
On the other hand, since $\phi=1$ on $[0, \frac{1}{2}]$,
\eqref{e:L2_pieno} and \eqref{e:Dirichlet_pieno} readily follow.
Therefore, in the hypothesis $\bI\geq 1$, i.e.~$\bH \leq r\bD$, we conclude
\eqref{e:Sigma1} from \eqref{e:poincare'}.
\end{proof}

\subsection{First variations}\label{s:variationI}
To prove the remaining estimates in Proposition \ref{p:variation} we 
exploit the first variation of $T$ along
some vector fields $X$. The variations are denoted by $\delta T (X)$.
We fix
a neighborhood $\bU$ of $\cM$ and the normal
projection $\p:\bU\to \cM$ as in \cite[Assumption 2.1]{DS4}. Observe that
$\p\in C^{2,\kappa}$ and \cite[Assumption 3.1]{DS2} holds.
We will consider:
\begin{itemize}
\item the {\em outer variations}, where $X (p)= X_o (p) := \phi \left(\frac{d(\p(p))}{r}\right) \, (p - \p(p) )$.
\item the {\em inner variations}, where $X (p) = X_i (p):= Y(\p(p))$ with
\[
Y(p) := \frac{d(p)}{r}\,\phi\left(\frac{d(p)}{r}\right)\, \frac{\de}{\de \hat r} \quad
\forall \; p \in \cM
\]
($\frac{\partial}{\partial \hat{r}}$ is the unit vector field tangent to the geodesics emanating from $\Phii (0)$
and pointing outwards).
\end{itemize}
Note that $X_i$ is the infinitesimal generator of
a one parameter family of bilipschitz homeomorphisms $\Phi_\eps$ defined as
$\Phi_\eps (p):= \Psi_\eps (\p (p)) + p - \p (p)$, where 
$\Psi_\eps$ is the one-parameter family of bilipschitz homeomorphisms of $\cM$ generated by $Y$.

Consider now the map $F(p) := \sum_i \a{p+ N_i (p)}$ and the current $\bT_F$
associated to its image (cf.~\cite{DS2} for the notation). Observe that $X_i$ and $X_o$ are supported in $\p^{-1} (\cB_r (q))$ but none of them is {\em compactly} supported. 
However, recalling Proposition \ref{p:flattening} (v) and 
the minimizing property of $T$ in $\Sigma$, we deduce that
$\delta T(X) = \delta T (X^T) + \delta T (X^\perp) = \delta T (X^\perp)$,
where $X=X^T+ X^\perp$ is the decomposition of $X$ in the tangent and normal components
to $T\Sigma$.
Then, we have 
\begin{align}\label{e:Err4-5}
&|\delta \bT_F (X)| \leq |\delta \bT_F (X) - \delta T (X)| + |\delta T(X^\perp)|\nonumber\\
\leq& \underbrace{\int_{\supp (T)\setminus \im (F)}  \left|\dv_{\vec T} X\right|\, d\|T\|
+ \int_{\im (F)\setminus \supp (T)} \left|\dv_{\vec \bT_F} X\right|\, d\|\bT_F\|}_{{\rm Err}_4}
+ \underbrace{\left|\int \dv_{\vec T} X^\perp\, d\|T\| \right|}_{{\rm Err}_5}.
\end{align}
Set now for simplicity $\varphi_r (p) :=  \phi \big(\frac{d(p)}{r}\big)$.
We wish to apply \cite[Theorem 4.2]{DS2} to conclude
\begin{align}\label{e:ov graph}
\delta \bT_F (X_o) =& \int_\cM\Big( \ph_r\,|D N|^2 + \sum_{i=1}^Q N_i\otimes \nabla \ph_r : D N_i \Big) + \sum_{j=1}^3 \textup{Err}^o_j,
\end{align}
where the errors ${\rm Err}^o_j$ correspond to the terms ${\rm Err_j}$ of \cite[Theorem 4.2]{DS2}. This
would imply
\begin{gather}
{\rm Err}_1^o = - Q \int_\cM \varphi_r \langle H_\cM, \etaa\circ N\rangle,\label{e:outer_resto_1}\\
|{\rm Err}_2^o| \leq C_0 \int_\cM |\varphi_r| |A|^2|N|^2,\label{e:outer_resto_2}\\
|{\rm Err}_3^o| \leq C_0 \int_\cM \big(|N| |A| + |DN|^2 \big) \big( |\varphi_r| |DN|^2  + |D\varphi_r| |DN| |N| \big)\label{e:outer_resto_3}\,,
\end{gather}
where $H_\cM$ is the mean curvature vector of $\cM$.
Note that \cite[Theorem 4.2]{DS2} requires the $C^1$ regularity of $\varphi_r$.
We overcome this technical obstruction applying
\cite[Theorem 4.2]{DS2} to a standard smoothing of $\phi$ and then passing into the limit (the obvious details are left to the reader). Plugging \eqref{e:ov graph} into \eqref{e:Err4-5}, we then conclude
\begin{equation}\label{e:ov_con_errori}
\left| \bD (r) - r^{-1} \bE (r)\right| \leq \sum_{j=1}^5 \left|\textup{Err}^o_j\right|\, ,
\end{equation}
where ${\rm Err}_4^o$ and ${\rm Err}_5^o$ correspond respectively 
to ${\rm Err}_4$ and
${\rm Err}_5$ of \eqref{e:Err4-5} when $X=X_o$.
With the same argument, but applying this time \cite[Theorem 4.3]{DS2} to $X=X_i$,
we get
\begin{align}\label{e:iv graph}
\delta \bT_F (X_i) =& \frac{1}{2} \int_\cM\Big(|D N|^2 {\rm div}_{\cM} Y  - 2
\sum_{i=1}^Q  \langle D N_i : ( D N_i \cdot D_{\cM} Y)\rangle \Big) + \sum_{j=1}^3 \textup{Err}^i_j\, ,
\end{align}
where this time the errors ${\rm Err}^i_j$ correspond to the error terms ${\rm Err}_j$ of 
\cite[Theorem 4.3]{DS2}, i.e.
\begin{gather}
{\rm Err}_1^i = - Q \int_{ \cM}\big( \langle H_\cM, \etaa \circ N\rangle\, {\rm div}_{\cM} Y + \langle D_Y H_\cM, \etaa\circ N\rangle\big)\, ,\label{e:inner_resto_1}\\
|{\rm Err}_2^i| \leq C_0 \int_\cM |A|^2 \left(|DY| |N|^2  +|Y| |N|\, |DN|\right), \label{e:inner_resto_2}\\
|{\rm Err}_3^i|\leq C_0 \int_\cM \Big( |Y| |A| |DN|^2 \big(|N| + |DN|\big) + |DY| \big(|A|\,|N|^2 |DN| + |DN|^4\big)\Big)\label{e:inner_resto_3}\, .
\end{gather}
Straightforward computations (again appealing to Proposition \ref{p:exp}) lead to 
\begin{align}\label{e:DY}
D_{\cM} Y (p) = \phi'\left(\frac{d(p)}{r}\right) \, \frac{d(p)}{r^2} \, \frac{\de}{\de \hat r} \otimes \frac{\de}{\de \hat r} + \phi \left(\frac{d(p)}{r}\right) \left( \frac{\Id}{r} + O(1)\right)\, ,
\end{align}
\begin{align}\label{e:divY}
\dv_\cM\, Y (p) 
& = \phi'\left(\frac{d(p)}{r}\right) \, \frac{d(p)}{r^2} + \phi\left(\frac{d(p)}{r}\right) \, \left(\frac{m}{r} + O(1) \right)\, .
\end{align}
Plugging \eqref{e:DY} and \eqref{e:divY} into \eqref{e:iv graph} and using \eqref{e:Err4-5} we then
conclude
\begin{equation}\label{e:iv_con_errori}
\left| \bD' (r) - (m-2)r^{-1} \bD (r) - 2 r^{-2} \bG (r)\right|
\leq C_0 \bD (r) + \sum_{j=1}^5 \left|{\rm Err}^i_j\right|\, .
\end{equation}
Proposition~\ref{p:variation} is then proved by the estimates of the errors
terms done in the next section.
 
\section{Error estimates}\label{s:variationII}
We start with some preliminary considerations, keeping the notation and convention of the previous section (and
dropping the subscript when dealing with the maps of Theorem  \ref{t:frequency} and Proposition \ref{p:variation}).

\subsection{Families of subregions}\label{s:regions}
Set $q:= \Phii (0)$.
We select a family of subregions of $\cB_r (p)\subset \cM$.
Denote by $B$ and $\partial B$ respectively $\p_\pi (\cB_r (q))$
and $\p_\pi (\partial \cB_r (q))$, where $\pi$ is the reference $m$-dimensional
plane of the construction of the center manifold $\cM$.
Since $\|\phii\|_{C^{3,\kappa}}\leq C \eps_3^{1/m}$ (cf.~\cite[Theorem~1.17]{DS4}),
by Proposition \ref{p:exp}
we can assume that $B$ is a $C^2$ convex set which at any boundary point $p$ contains an interior
sphere of radius $r/2$ passing through $p$. Thus:
\begin{equation}\label{e:convex}
\forall z\in \de B \quad \mbox{there is a ball $B_{r/2} (y, \pi)\subset B$ whose closure touches
$\partial B$ at $z$.}
\end{equation}

\begin{definition}[Family of cubes]\label{d:cubes}
We first define a family $\mathcal{T}$ of cubes in the Whitney decomposition 
$\mathscr{W}$ as follows:
\begin{itemize}
\item[(i)] $\mathcal{T}$ includes all $L\in \mathscr{W}_h \cup \mathscr{W}_e$ which intersect $B$;
\item[(ii)] if $L'\in \mathscr{W}_n$ intersects $B$ and
belongs to the domain of influence
$\mathscr{W}_n (L)$ of the cube $L\in \mathscr{W}_e$ as in \cite[Corollary~3.2]{DS4}, then $L\in \mathcal{T}$.
\end{itemize}
\end{definition}

\begin{definition}[Associated balls $B^L$]\label{d:balls}
By Proposition~\ref{p:flattening} (iii), $\ell (L)\leq 3 c_s r \leq r$
and ${\rm sep} (L, B) \leq 3\sqrt{m} \,\ell (L)$ for each $L\in \mathcal{T}$. 
Let $x_L$ be the center of $L$ and:
\begin{itemize}
\item[(a)] if $x_L \in \overline{B}$, we then set $s(L):=\ell(L)$ and
$B^L := B_{s(L)} (x_L, \pi)$;
\item[(b)] otherwise we consider the ball $B_{r(L)} (x_L, \pi)\subset \pi$ whose closure
touches $\overline{B}$ at exactly one point $p(L)$, we set $s(L):= r(L) + \ell (L)$
and define $B^L:= B_{s(L)} (x_L, \pi)$.
\end{itemize}
\end{definition}
{ Observe that, when $L\in \mathcal{T}\cap \sW_h$, then $s (L)$ is at most $(\sqrt{m}+ 1) \ell (L)$.}
We proceed to select a countable family
$\mathscr{T}$ of pairwise disjoint balls $\{B^L\}$. We
let $S:= \sup_{L\in \mathcal{T}} s(L)$ and start selecting
a maximal subcollection $\mathscr{T}_1$ 
of pairwise disjoint balls with radii larger than $S/2$.
Clearly, $\mathscr{T}_1$ is finite. In general, at the stage $k$,
we select a maximal subcollection $\mathscr{T}_k$ of pairwise disjoint balls
which do not intersect any of the previously selected balls in
$\mathscr{T}_1 \cup \ldots \cup \mathscr{T}_{k-1}$ and which
have radii $r\in ]2^{-k} S, 2^{1-k}S]$.
Finally, we set $\mathscr{T} := \bigcup_k \mathscr{T}_k$. 

\begin{definition}[Family of cube-ball pairs $(L, B(L))\in \mathscr{Z}$]\label{d:coppie}
Recalling \eqref{e:convex} and $\ell (L)\leq r$, it easy to see that there exist balls
$B_{\ell (L)/4} (q_L, \pi) \subset B^L\cap B$ which lie at distance
at least $\ell (L)/4$ from $\partial B$. We denote by $B(L)$ one such ball and 
by $\mathscr{Z}$ the collection of pairs $(L, B(L))$ with $B^L\in \mathscr{T}$.
\end{definition}

Next, we partition the cubes of $\mathscr{W}$ which intersect $B$
into disjoint families $\mathscr{W} (L)$ labeled by $(L,B(L))\in \mathscr{Z}$
in the following way (observe that $\mathscr{W} (L)$ and $\mathscr{W}_n (L)$ are different families
and should not be confused!).
Let $H\in \mathscr{W}$ have nonempty intersection with $B$.
If $H$ is itself in $\mathcal{T}$, we then select $L\in \mathscr{T}$ with $B^L\cap B^H \neq\emptyset$ and assign 
$H\in \mathscr{W} (L)$. 
Otherwise $H$ is in the domain of influence of
some $J\in \mathscr{W}_e$.
By Proposition~\ref{p:flattening}, the separation between $J$ and $H$ is at most
$3\sqrt{m} \ell (J)$ and, hence, $H\subset B_{4\sqrt{m} \ell (J)} (x_J)$.
By construction there is a $B^L \in \mathscr{T}$ with $B^J\cap B^L\neq \emptyset$
and radius $s(L)\geq \frac{s(J)}{2}$. We then prescribe $H\in \mathscr{W} (L)$.
Observe that $s(L)\leq 4\sqrt{m} \ell (L)$ and $s(J) \geq \ell (J)$.
Therefore, $\ell (J) \leq 8\sqrt{m} \ell (L)$ and $|x_J - x_L|\leq 5 s(L) \leq 20 \sqrt{m} \ell (L)$. This implies that
\[
H\subset B_{4\sqrt{m} \ell (J)} (x_J) \subset B_{4\sqrt{m} \ell (J) + 20 \sqrt{m} \ell (L)} (x_L)
\subset B_{30 \sqrt{m} \ell (L)} (x_L)\, .
\]
The inclusion $H\subset B_{30 \sqrt{m} \ell (L)} (x_L)$ holds also in case $H\in \mathcal{T}$, as can be easily seen
simply setting $J=H$ and using the same computations.
For later reference, we collect the main properties of the above construction.

\begin{lemma}\label{l:covering}
The following holds.
\begin{enumerate}
\item[(i)] If $(L, B(L))\in \mathscr{Z}$, then $L\in \mathscr{W}_e \cup \mathscr{W}_h$, the radius
of $B(L)$ is $\frac{\ell (L)}{4}$, $B(L)\subset B^L\cap B$
and ${\rm sep}\, (B(L), \partial B)\geq \frac{\ell (L)}{4}$.

\item[(ii)] If the pairs $(L, B(L)), (L', B(L'))\in \mathscr{Z}$ are distinct, then
$L$ and $L'$ are distinct and $B (L)\cap B (L') =\emptyset$.

\item[(iii)] The cubes $\mathscr{W}$ which intersect $B$
are partitioned into disjoint families $\mathscr{W} (L)$ labeled by
$(L,B(L))\in \mathscr{Z}$ such that,
if $H\in \mathscr{W} (L)$, then $H\subset B_{30 \sqrt{m} \ell (L)} (x_L)$. 
\end{enumerate}
\end{lemma}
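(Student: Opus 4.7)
The plan is to verify each of the three assertions by unpacking the definitions, since most of the substantive geometric work has already been carried out in Proposition \ref{p:flattening} and in the paragraph preceding the lemma. I will treat the claims in order.

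For (i), the membership $L\in \mathscr{W}_e\cup\mathscr{W}_h$ is immediate from Definition \ref{d:cubes}: every cube added to $\mathcal{T}$ is of one of those two types, and $\mathscr{Z}$ is indexed only through cubes whose associated $B^L$ has been selected into $\mathscr{T}$. The value of the radius of $B(L)$ and the inclusion $B(L)\subset B^L\cap B$ are built into Definition \ref{d:coppie}. The only nontrivial point is to verify that a center $q_L$ with ${\rm dist}(q_L,\partial B)\geq \ell(L)/4$ actually exists inside $B^L\cap B$. Here I would invoke the interior sphere condition \eqref{e:convex} together with $\ell(L)\leq r$: in case (a) of Definition \ref{d:balls} one has $x_L\in\overline{B}$, and following the inward normal at the nearest boundary point of $B$ for a distance $\ell(L)/2$ produces a point whose $\ell(L)/4$-ball lies in $B^L\cap B$ and is at distance $\ell(L)/4$ from $\partial B$; in case (b) the touching point $p(L)$ plays the same role, since $B^L$ extends an additional $\ell(L)$ past $p(L)$ along the inward normal.

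For (ii), distinct pairs in $\mathscr{Z}$ correspond to distinct cubes $L$, because $B(L)$ is a function of $L$ alone; consequently the balls $B^L$ are distinct, and since the collection $\mathscr{T}$ was chosen by the Vitali-type procedure of Section \ref{s:regions} so that its elements are pairwise disjoint, the inclusion $B(L)\subset B^L$ forces $B(L)\cap B(L')=\emptyset$. For (iii), the partition is well defined: given $H\in \mathscr{W}$ meeting $B$, either $H\in\mathcal{T}$, or by Proposition \ref{p:flattening}(iii) $H$ lies in the domain of influence of some $J\in \mathscr{W}_e$ meeting $B$, whence $J\in\mathcal{T}$. By the maximality of the Vitali selection, for every $K\in\mathcal{T}$ there exists some $B^L\in\mathscr{T}$ with $B^L\cap B^K\neq\emptyset$ and $s(L)\geq s(K)/2$; fixing once and for all a selection rule assigns $H$ to a unique such $L$, producing a disjoint partition. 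The inclusion $H\subset B_{30\sqrt{m}\ell(L)}(x_L)$ was essentially established in the paragraph preceding the lemma, by chaining the bound ${\rm sep}(J,H)\leq 3\sqrt{m}\ell(J)$, the comparison $\ell(J)\leq 8\sqrt{m}\ell(L)$ forced by $s(L)\geq s(J)/2$ combined with $s(\cdot)\leq 4\sqrt{m}\ell(\cdot)$, and the center separation $|x_J-x_L|\leq 20\sqrt{m}\ell(L)$; the case $H\in\mathcal{T}$ is covered by setting $J=H$.

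The main difficulty is not any single hard estimate but the combinatorial bookkeeping: one must track the comparisons $\ell(J)\leq C\ell(L)$, $|x_J-x_L|\leq C\ell(L)$, and $s(L)\sim \ell(L)$ through both cases of Definition \ref{d:balls} so that the final constant $30\sqrt{m}$ is not exceeded, and one must be sure that the Vitali selection really does produce, for each $K\in\mathcal{T}$, a competitor of comparable size. Once these comparisons are recorded, the three assertions are essentially tautological consequences of the construction.
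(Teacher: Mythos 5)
Your proof is correct and follows essentially the same route as the paper's: the paper gives no separate proof of this lemma, but states it as a summary of the construction carried out in Definitions \ref{d:cubes}, \ref{d:balls}, \ref{d:coppie} and the surrounding paragraphs, which your proposal faithfully reconstructs.

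One small point of care in your case~(a) justification of the existence of $B(L)$: the recipe ``move inward from the nearest boundary point $z$ of $\partial B$ by $\ell(L)/2$'' is only correct when ${\rm dist}(x_L,\partial B) < \ell(L)/2$; in that sub-case $x_L$, $z$ and the interior-sphere center $y$ from \eqref{e:convex} are collinear, $q_L$ lies on the segment $[z,y]$ past $x_L$, and $|q_L-x_L|\le \ell(L)/2$, so $B_{\ell(L)/4}(q_L,\pi)\subset B^L=B_{\ell(L)}(x_L,\pi)$. When ${\rm dist}(x_L,\partial B)\ge \ell(L)/2$ you should instead simply take $q_L = x_L$: otherwise the displaced point can escape $B^L$. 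With that trivial case split, the argument is exactly the one the paper elides with ``it is easy to see''.
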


\subsection{Basic estimates in the subregions}\label{s:est_regions}
For notational convenience, we order the family 
$\mathscr{Z} =\{(L_i, B(L_i))\}_{i\in \N}$, and set
\[
\mathcal{B}^i := \Phii (B (L_i))\qquad \mathcal{U}_i = \cup_{H\in \mathscr{W} (L_i)} \Phii (H) \cap \cB_r (q)\, 
\] 
(recall that $q= \Phii (0)$).
Observe that the separation between $\cB^i$ and
$\partial \mathcal{B}_r (q)$ is larger than that between $B(L_i)$ and $\partial B = \p_\pi (\partial \mathcal{B}_r (q))$. Thus, by Lemma~\ref{l:covering} (i),
$\varphi_r (p) = \phi \big(\frac{d(p)}{r}\big)$ satisfies
\begin{align}
\inf_{p\in \cB^i} \varphi_r (p) \geq (4r)^{-1} \ell_i\label{e:peso_1}\, ,
\end{align}
where $\ell_i := \ell (L_i)$. From this and Lemma~\ref{l:covering} (iii), 
we also obtain
\[
\sup_{p\in \mathcal{U}_i} \varphi_r (p)
- \inf_{p\in \mathcal{U}_i}\varphi_r (p) \leq C \Lip (\varphi_r) \ell_i \leq \frac{C}{r}\,\ell_i \stackrel{\eqref{e:peso_1}}{\leq}
C \inf_{p\in \mathcal{B}_i} \varphi_r (p)\, ,
\]
which translates into
\begin{equation}\label{e:peso_2}
\sup_{p\in \mathcal{U}_i} \varphi_r (p) \leq C \inf_{p\in \cB^i} \varphi_r (p)\, .
\end{equation}
Moreover, set { $\mathcal{V}_i := \mathcal{U}_i \cap (((\supp (\bT_F) \setminus \supp (T))\cup (\supp (T) \setminus \supp (\bT_F)))$ and observe that $\mathcal{V}_i \subset \mathcal{U}_i\setminus \mathcal{K}$,} where $\mathcal{K}$ is the coincidence set of \cite[Theorem~2.4]{DS4}.
From \cite[Theorem~2.4]{DS4}, we derive the following estimates:
\begin{gather}
\int_{\mathcal{U}_i} |\etaa\circ N| \leq C_0 \bmo\,\ell_i^{2+m+\sfrac{\gamma_2}{2}} + C_0 \int_{\cU^i} |N|^{2+\gamma_2},\label{e:media}\\
\int_{\mathcal{U}_i} |DN|^2 \leq C_0 \bmo\, \ell_i^{m+2-2\delta_2},\label{e:Dirichlet_sopra}\\
\|N\|_{C^0 (\mathcal{U}_i)} + \sup_{p\in \supp (T) \cap \p^{-1} (\mathcal{U}_i)} |p- \p (p)| \leq C_0 \bmo^{\sfrac{1}{2m}} \ell_i^{1+\beta_2},\label{e:N_sopra}\\
\Lip (N|_{\mathcal{U}_i}) \leq C_0 \bmo^{\gamma_2} \ell_i^{\gamma_2},\label{e:Lipschitz}\\
\mass (T\res \p^{-1} (\mathcal{V}_i)) + \mass (\bT_F \res \p^{-1} (\mathcal{V}_i)) \leq C_0 \bmo^{1+\gamma_2} \ell_i^{m+2+\gamma_2}.\label{e:errori_massa}
\end{gather}
To prove these estimates, observe first that
$\sum_{H\in \sW (L_i)} \ell (H)^m \leq C_0 \ell_i^m$,
because all $H\in \sW (L_i)$ are disjoint and contained in a ball of radius
comparable to $\ell_i$. This in turn implies that
$\sum_{H\in \sW (L_i)} \ell (H)^{m+\varepsilon} \leq C_0 \ell_i^{m+\varepsilon}$, because 
$\ell (H)\leq \ell_i$ for any $H\in \sW (L)$. Thus:
\begin{itemize}
\item[-] \eqref{e:media} follows summing the estimate of
\cite[Theorem~2.4 (2.4)]{DS4} applied with $a=1$ to 
$\Phii (H)$ with $H\in \sW (L_i)$;
\item[-] \eqref{e:Dirichlet_sopra} follows from summing the estimate of \cite[Theorem~2.4 (2.3)]{DS4} applied to 
$\Phii (H)$ with $H\in \sW (L_i)$;
\item[-] \eqref{e:N_sopra} follows from \cite[Theorem~2.4 (2.1)]{DS4} and
\cite[Corollary~2.2 (ii)]{DS4};
\item[-] \eqref{e:Lipschitz} follows from \cite[Theorem~2.4 (2.1)]{DS4};
\item[-] \eqref{e:errori_massa} follows summing \cite[Theorem~2.4 (2.2)]{DS4}
applied to $\Phii (H)$ with $H\in \sW (L_i)$.
\end{itemize}

The last ingredient for the completion of the proof of Proposition~\ref{p:variation}
are the following three key estimates which are derived from the analysis of
the construction of the center manifold in \cite{DS4}.

\begin{lemma}\label{l:exponent_match}
Under the assumptions of Proposition \ref{p:variation}, it holds
\begin{align}
\sum_i \big(\inf_{\cB^i} \varphi_r \big) \,\bmo \,
\ell_i^{m+2+\sfrac{\gamma_2}{4}} \leq C_0 \bD (r),\label{e:D_globale}\\
\sum_i \bmo\, \ell_i^{m+2+\sfrac{\gamma_2}{4}} \leq C_0 \left( \bD (r) + r \bD' (r)\right)\, ,\label{e:D'_globale}
\end{align}
for some geometric constant $C_0$.
Moreover, for every $t>0$ there exists $C_0>0$ and $a>0$ such that, for $C(t) = C^t$ and $\gamma (t) = a t$ we have
\begin{equation}\label{e:gamma(a)}
\sup_i
\bmo^t\Big[{\ell_i}^t + \Big(\inf_{\cB^i} \varphi_r\Big)^{\sfrac{t}{2}} \ell_i^{\sfrac{t}{2}}\Big]
\leq C(t) \bD (r)^{\gamma(t)}.
\end{equation}
\end{lemma}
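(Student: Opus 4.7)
My plan is to reduce each of the three estimates to a single local ``matching'' inequality for every cube $L_i$ in the family $\mathscr{Z}$, of the form
\[
\bmo \, \ell_i^{m+2+\sfrac{\gamma_2}{4}} \leq C \int_{\mathcal{U}_i} |DN|^2,
\]
and then sum over $i$. By Lemma \ref{l:covering}(iii), the regions $\mathcal{U}_i$ have uniformly bounded overlap and are contained in $\cB_r (q)$, so
\[
\sum_i \int_{\mathcal{U}_i} |DN|^2 \leq C_0 \int_{\cB_r(q)} |DN|^2 \leq C_0 \bigl(\bD (r) + r \bD'(r)\bigr)
\]
by \eqref{e:Dirichlet_pieno}, which gives \eqref{e:D'_globale}. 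For \eqref{e:D_globale}, I would insert the factor $\inf_{\cB^i}\varphi_r$ on the left of the local inequality and use \eqref{e:peso_2} to transfer this factor inside the integral (replacing it by $\varphi_r$ up to a geometric constant), so that the right-hand side becomes $C \int_{\cM} \varphi_r \,|DN|^2 = C\, \bD (r)$.

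\textbf{Local matching at a cube $L_i$.} The crucial input is the reason why $L_i$ belongs to $\sW_e \cup \sW_h$, i.e.~that one of the refinement conditions of \cite[Proposition 1.11]{DS4} failed at $L_i$ (only these two classes occur by Lemma \ref{l:covering}(i)). If $L_i \in \sW_e$, then $\bE(T_j,\B_{L_i}) > C_e \,\bmo\, \ell_i^{2-2\delta_2}$. Applying the Lipschitz approximation estimates of \cite[Theorem 2.4]{DS4} on the parent of $L_i$ (where the excess condition still holds and the harmonic approximation is effective), together with the Taylor expansion linking the excess of $T_j$ to the Dirichlet energy of $N$, gives the reverse of \eqref{e:Dirichlet_sopra}, namely $\int_{\mathcal{U}_i} |DN|^2 \geq c\, \bmo\, \ell_i^{m+2-2\delta_2}$, up to errors of the form $\bmo^{1+\gamma_2}\ell_i^{m+2+\gamma_2}$ from \eqref{e:errori_massa} which are absorbed for $\eps_3$ small. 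Since $\ell_i \leq c_s r \leq 1$ and $-2\delta_2 < \sfrac{\gamma_2}{4}$, this gives the desired matching inequality. If $L_i \in \sW_h$, then the violated height condition $\bh(T_j,\B_{L_i}) > C_h \bmo^{\sfrac{1}{2m}} \ell_i^{1+\beta_2}$, combined with the $C^0$ bound \eqref{e:N_sopra}, forces the existence of points in $\mathcal{U}_i$ at which $|N|$ is at least $c\,\bmo^{\sfrac{1}{2m}}\ell_i^{1+\beta_2}$; together with the Lipschitz bound \eqref{e:Lipschitz} and the vanishing of $N$ on $\mathcal{K}$, a scaled Poincar\'e-type argument then converts this pointwise lower bound on $N$ into the required lower bound on $\int_{\mathcal{U}_i}|DN|^2$.

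\textbf{Sup estimate and main obstacle.} For \eqref{e:gamma(a)}, the idea is to write each term as
\[
\bmo^t \ell_i^t = \bigl(\bmo\,\ell_i^{m+2+\sfrac{\gamma_2}{4}}\bigr)^\alpha \cdot \bmo^{t-\alpha}\ell_i^{t - \alpha(m+2+\sfrac{\gamma_2}{4})},
\]
and analogously for $\bmo^t (\inf_{\cB^i}\varphi_r)^{\sfrac{t}{2}} \ell_i^{\sfrac{t}{2}}$, using \eqref{e:peso_1} to compare $(\inf_{\cB^i}\varphi_r)^{t/2}\ell_i^{t/2}$ with a power of $(\inf_{\cB^i}\varphi_r)\ell_i^{m+2+\gamma_2/4}$ times a bounded remainder. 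Choosing $\alpha = \gamma(t) := t/(2(m+2+\sfrac{\gamma_2}{4}))$ makes all residual exponents of $\bmo$, $\ell_i$ and $\inf_{\cB^i}\varphi_r$ non-negative; since $\bmo \leq \eps_3^2$, $\ell_i \leq c_s r$ and $\inf_{\cB^i}\varphi_r \leq 1$ are uniformly bounded, \eqref{e:D_globale} raised to the power $\alpha$ gives the required estimate $\leq C(t)\bD(r)^{\gamma(t)}$. The principal obstacle is the local matching inequality, and within it the case $L_i\in \sW_h$: the height threshold is a pointwise statement about $T_j$, not about $N$, so one must carefully combine the pointwise $C^0$ bound on $N$ with the Lipschitz control on $DN$ and the coincidence set $\mathcal{K}$, and then verify that after all these manipulations the powers of $\bmo$ and $\ell_i$ yielded by the parameters $\beta_2,\delta_2,\gamma_2, \kappa$ fixed in \cite[Assumption 1.8]{DS4} indeed produce the exponent $m+2+\sfrac{\gamma_2}{4}$ claimed in the lemma.
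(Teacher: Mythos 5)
Your overall scheme — establish a local lower bound at each selected cube and sum over the disjoint family — is the same skeleton the paper uses, and your interpolation argument for \eqref{e:gamma(a)} is essentially the paper's as well. However, there is a genuine gap in the step you yourself flag as the principal obstacle, namely the case $L_i\in\sW_h$.

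The paper's proof is considerably more economical than what you propose: it simply cites \cite[Propositions~3.1 and 3.4]{DS4}, which give, for the balls $\cB^i$,
\begin{equation*}
\int_{\cB^i}|N|^2 \geq c_0\,\bmo^{\sfrac1m}\ell_i^{m+2+2\beta_2}\quad (L_i\in\sW_h),
\qquad
\int_{\cB^i}|DN|^2 \geq c_0\,\bmo\,\ell_i^{m+2-2\delta_2}\quad (L_i\in\sW_e),
\end{equation*}
and then controls the resulting $L^2$ piece by $\bD(r)$ using precisely the hypothesis $\bI\geq 1$ (via Lemma~\ref{l:poincare'}, i.e.~$\bH\leq r\bD$ forces $\int_{\cB_r}|N|^2\leq C r^2\bD(r)$). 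Note carefully that for $\sW_h$ cubes the lower bound is on $\int|N|^2$, \emph{not} on $\int|DN|^2$.

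Your plan to instead prove the single local inequality $\bmo\,\ell_i^{m+2+\gamma_2/4}\leq C\int_{\cU_i}|DN|^2$ for all $L_i$ cannot work in the $\sW_h$ case as described. The violated height condition gives a large spread of the sheets of $T$, hence a large value of $|N|$ somewhere in $\cU_i$, but this is a lower bound on the \emph{magnitude} of $N$, not on its \emph{oscillation}. To convert this to a Dirichlet lower bound by Poincar\'e you would need $N$ to be small at some nearby point, e.g.~to vanish on a portion of the coincidence set $\cK$; but precisely for $\sW_h$ cubes, where the sheets are far apart, there is no reason for $\cK\cap\cU_i$ to be nonempty, and indeed it typically is not. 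Without a vanishing point $N$ could in principle be a large constant on $\cU_i$, with $\int_{\cU_i}|DN|^2$ as small as you like, so your local inequality fails. The paper's detour through the $L^2$ lower bound together with $\bI\geq1$ is not an optional refinement but the load-bearing device that makes the $\sW_h$ case go through, and it cannot be replaced by a Poincar\'e argument of the kind you sketch.

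Two smaller remarks: (1) you should also track the exponent $\bmo^{1/m}$ in the $\sW_h$ lower bound (it is weaker than $\bmo$, and one needs $\bmo\leq\bmo^{1/m}$ together with $8\beta_2<\gamma_2$ to land at $\bmo\,\ell_i^{m+2+\gamma_2/4}$); and (2) your $\sW_e$ reconstruction via the excess and the Lipschitz approximation would essentially reprove \cite[Proposition~3.4]{DS4}, which the paper already has available to cite — not wrong, but redundant.
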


\begin{proof} Recall that, from \cite[Propositions~3.1 and 3.4]{DS4} and \eqref{e:peso_1} 
we have, for some geometric positive constant $c_0$
{
\begin{align}
\int_{\cB^i} \varphi_r |N|^2 &\geq c_0 \, \bmo^{\sfrac{1}{m}} \inf_{\cB^i} \varphi_r \ell_i^{m+2+2\beta_2}\geq c_0 \bmo^{\sfrac{1}{a}} \Big[\ell_i^2 + \Big(\inf_{\cB^i} \varphi_r\Big) \ell_i\Big]^{\sfrac{1}{(2a)}}  \quad 
\mbox{if $L_i\in \mathscr{W}_h$},\label{e:N_sotto}\\
\int_{\cB^i} \varphi_r |DN|^2 &\geq c_0 \, \bmo\, \inf_{\cB^i} \varphi_r \ell_i^{m+2-2\delta_2}\geq c_0 \bmo^{\sfrac{1}{a}} \Big[\ell_i^2 + \Big(\inf_{\cB^i} \varphi_r\Big) \ell_i\Big]^{\sfrac{1}{(2a)}}
\quad\mbox{if $L_i \in \mathscr{W}_e$}\, \label{e:Dirichlet_sotto}
\end{align}
where we just need $a\leq \min \{ 1/(2 (m+2+2\beta_2)), 1/(2(m+2-2\delta_2))\}$
(note that \eqref{e:N_sotto} follows from \cite[Proposition~3.1 (S3)]{DS4} because $s (L_i)\leq (\sqrt{m}+1) \ell (L_i)$  for $L_i\in \mathscr{W}_h$).
Therefore, by Lemma~\ref{l:poincare'}, \eqref{e:peso_1}, \eqref{e:N_sotto} and
\eqref{e:Dirichlet_sotto}, it follows easily that,
\begin{align*}
&2^{-t} c_0^{at} \bmo^t\Big[\ell_i^t + \Big(\inf_{\cB^i} \varphi_r\Big)^{\sfrac{t}{2}} \ell_i^{\sfrac{t}{2}}\Big]
\leq \left(\int_{\cB^i} \varphi_r |DN|^2\right)^{at} +\left(\int_{\cB^i} \varphi_r |N|^2\right)^{at}\\
 \leq & 2^t \left(\int_{\cB^i} \varphi_r (|DN|^2 + |N|^2)\right)^{at}
\stackrel{\eqref{e:L2_pieno2}\,\&\,\bI\geq1}{\leq} C_0^t \bD (r)^{at}\, .
\end{align*}
Taking the supremum over $i$ we achieve \eqref{e:gamma(a)}}. Next, \eqref{e:D_globale} follows similarly because the $\cB^i$ are disjoint and $8\,\beta_2<\gamma_2$:
\[
\sum_i \big(\inf_{\cB^i} \varphi_r \big) \bmo\, \ell_i^{m+2+\sfrac{\gamma_2}{4}} 
\leq C \sum_i \int_{\cB^i} \varphi_r (|DN|^2 +|N|^2) 
\stackrel{\eqref{e:L2_pieno2}\,\&\,\bI\geq1}{\leq} C \bD (r)\, .
\]
Finally, arguing as above we conclude that
\begin{equation}\label{e:nuova_referenza}
\sum_i \bmo\, \ell_i^{m+2+\sfrac{\gamma_2}{4}} \leq C \int_{\cB_r (q)} \big(|DN|^2+|N|^2\big)
\stackrel{\eqref{e:Dirichlet_pieno}\,\&\,\eqref{e:L2_pieno2}}{\leq} C \big(\bD (r) + r \bD' (r)\big).
\end{equation}
and, hence, \eqref{e:D'_globale} follows from Lemma \ref{l:poincare'}.
\end{proof}

\subsection{Proof of Proposition~\ref{p:variation}: \eqref{e:out} and \eqref{e:in}}
We can now pass to
estimate the errors terms in \eqref{e:out} and \eqref{e:in} in order to conclude
the proof of Proposition~\ref{p:variation}. Unless otherwise specified, the constants denoted by $C$ will be
assumed to be geometric (i.e. to depend only upon the parameters introduced in \cite{DS4}). 

\medskip

\noindent {\bf Errors of type 1.}
By \cite[Theorem~1.12]{DS4}, the map $\phii$ defining the center
manifold satisfies $\|D\phii\|_{C^{2, \kappa}} \leq C\, \bmo^{\sfrac12}$,
which in turn implies $\|H_\cM\|_{L^\infty} + \|DH_\cM\|_{L^\infty} \leq C\, \bmo^{\sfrac12}$ (recall that $H_\cM$ denotes
the mean curvature of $\cM$).
Therefore, by \eqref{e:peso_2}, \eqref{e:media}, \eqref{e:D_globale} and \eqref{e:gamma(a)}, we get
\begin{align}
\left|{\rm Err}^o_1\right| &\leq C_0 \int_\cM \varphi_r \,|H_\cM|\,
|\etaa\circ N| \notag\allowdisplaybreaks\\
& \leq C_0 \, \bmo^{\sfrac12} \sum_j  
\Big(\big(\sup_{\mathcal{U}_i} \varphi_r\big)\,\bmo\, \ell_j^{2+m+\gamma_2} + C_0 \int_{\mathcal{U}_j} \ph_r|N|^{2+\gamma_2}\Big)\notag\allowdisplaybreaks\\
&\leq C \bD (r)^{1+\gamma_3} 
+ C \sum_j \bmo^{\sfrac12} \,\ell_j^{\gamma_2(1+\beta_2)} \int_{\mathcal{U}_j} \ph_r|N|^2
\leq C (\gamma_3) \bD (r)^{1+\gamma_3}\,,\notag\allowdisplaybreaks
\end{align}
provided $\gamma_3> 0$ is sufficiently small depending only upon $m, \beta_2, \delta_2$ and $\gamma_2$. Analogously
\begin{align}
& \left|{\rm Err}^i_1\right| \leq C\,r^{-1} \int_\cM \big(|H_\cM|+ |D_Y H_\cM|\big)
|\etaa\circ N| \notag\allowdisplaybreaks\\
\leq & C \,r^{-1} \bmo^{\sfrac12} \sum_j  
\Big(\,\bmo\, \ell_j^{2+m+\gamma_2} + C \int_{\mathcal{U}_j} |N|^{2+\gamma_2}\Big)
\leq C (\gamma_3)\,r^{-1} \bD (r)^{\gamma_3}\big(\bD(r)+r\,\bD'(r)\big).\notag
\end{align}

\noindent {\bf Errors of type 2.}
From $\|A\|_{C^0} \leq C \|D \phii\|_{C^2} \leq C \bmo^{\sfrac{1}{2}} \leq C \eps_3$, it follows that
${\rm Err}^o_2 \leq C \eps_3^2 \bSigma (r)$.
Moreover, since $|D X_i| \leq Cr^{-1}$, Lemma~\ref{l:poincare'} gives
\begin{align}
\left|{\rm Err}^i_2\right| &\leq C r^{-1} \int_{\cB_r (p_0)} |N|^2 + C \int \varphi_r |N| |DN|
\leq C \bD (r)\, .\notag
\end{align}

\noindent {\bf Errors of type 3.}
Clearly, we have
\begin{align}
\left|{\rm Err}^o_3\right| \leq & \underbrace{\int \varphi_r \left(|DN|^2 |N| + |DN|^4\right)}_{I_1}
+ C\underbrace{r^{-1} \int_{\cB_r (q)} |DN|^3 |N|}_{I_2}
 + C\underbrace{r^{-1} \int_{\cB_r (q)} |DN| |N|^2}_{I_3}\, .\notag
\end{align}
We estimate separately the three terms (recall that
$\gamma_2> 4\delta_2$):
\begin{align}
I_1 & \leq \int_{\cB_r (p_0)} \varphi_r (|N|^2 |DN| + |DN|^3)
\leq I_3 +  C \sum_j \sup_{\cU_j} \varphi_r \bmo^{1+\gamma_2} \ell_j^{m+2+\sfrac{\gamma_2}{2}}\nonumber\allowdisplaybreaks\\
&\stackrel{\eqref{e:D_globale}\,\&\,\eqref{e:gamma(a)}}{\leq} I_3 + C (\gamma_3) \bD (r)^{1+\gamma_3},\notag
\allowdisplaybreaks\\
I_2 &\leq C r^{-1} \sum_j \bmo^{1+\sfrac{1}{2m}+\gamma_2} \ell_j^{m+3+\beta_2 +\sfrac{\gamma_2}{2}}
\stackrel{\eqref{e:peso_2}}{\leq} C \sum_j \bmo^{1+\sfrac{1}{2m}+\gamma_2} \ell_j^{m+2+\beta_2 +\sfrac{\gamma_2}{2}} \inf_{\cB^j} \varphi_r\nonumber\allowdisplaybreaks\\
&\stackrel{\eqref{e:D_globale}\,\&\,\eqref{e:gamma(a)}}{\leq} C (\gamma_3) \bD (r)^{1+\gamma_3},\notag
\allowdisplaybreaks\\
I_3 &\leq C r^{-1} \sum_j \bmo^{\gamma_2} \ell_j^{\gamma_2} \int_{\mathcal{U}_j} |N|^2
\stackrel{\eqref{e:gamma(a)}}{\leq} C \gamma_3) r^{-1} \bD (r)^{\gamma_3}  \int_{\cB_r (q)} |N|^2
\stackrel{\eqref{e:L2_pieno2}}{\leq} C (\gamma_3) \bD (r)^{1+\gamma_3}\, ,\notag
\end{align}
provided $\gamma_3>0$ is sufficiently small.
For what concerns the inner variations, we have 
\begin{align}
|{\rm Err}^i_3| &\leq C \int_{\cB_r (q)} \big(r^{-1}|DN|^3 + r^{-1} |DN|^2 |N| + r^{-1} |DN| |N|^2\big)\, .\notag
\end{align}
The last integrand corresponds to $I_3$, while the remaining part can be estimated as
follows:
\begin{align}
\int_{\cB_r (q)} r^{-1}(|DN|^3 + |DN|^2 |N|)&\leq C \sum_j r^{-1}(\bmo^{\gamma_2} \ell_j^{\gamma_2} +
\bmo^{\sfrac{1}{2m}} \ell_j^{1+\beta_2}) \int_{\mathcal{U}_j} |DN|^2\notag\allowdisplaybreaks\\
&\stackrel{\mathclap{\eqref{e:gamma(a)}}}{\leq} \, C (\gamma_3)\,r^{-1} \bD (r)^{\gamma_3} \int_{\cB_r (q)} |DN|^2\notag\allowdisplaybreaks\\
&\leq 
C(\gamma_3)  \bD (r)^{\gamma_3} \left(\bD' (r) + r^{-1}\bD (r)\right)\, .\notag
\end{align}

\medskip

\noindent {\bf Errors of type 4.} We compute explicitly
\begin{align}
|D X_o (p)| &\leq 2\, |p - \p(p)| \, \frac{|Dd(\p(p),q)|}{r} + \varphi_r (p) \, |D(p-\p(p))|
\leq C\,\left(\frac{|p - \p(p)|}{r} + \varphi_r (p)\right)\, .\notag
\end{align}
It follows readily from \eqref{e:Err4-5}, \eqref{e:N_sopra}
and \eqref{e:errori_massa} that
\begin{align}
|{\rm Err}^o_4| &\leq \sum_i C \Big(r^{-1} \bmo^{\sfrac{1}{2m}} \ell_i^{1+\beta_2}
+ \sup_{\mathcal{U}_i} \varphi_r\Big) \bmo^{1+\gamma_2} \ell_i^{m+2+\gamma_2}\notag\allowdisplaybreaks\\
&\stackrel{\eqref{e:peso_1}\,\& \,\eqref{e:peso_2}}{\leq} C \sum_i
\left[
\bmo^{\gamma_2} \ell_i^{\sfrac{\gamma_2}{4}}\right] 
\inf_{\cB_i} \varphi_r\, \bmo\, \ell_i^{m+2+\sfrac{\gamma_2}{4}}
\stackrel{\eqref{e:D_globale}\,\&\eqref{e:gamma(a)}}{\leq} C (\gamma_3) \bD (r)^{1+\gamma_3}.
\label{e:stima4-5o}
\end{align}
Similarly, since $|D X_i|\leq C r^{-1}$, we get
\begin{align*}
{\rm Err}^i_4 &\leq C r^{-1} \sum_j \Big(\bmo^{\gamma_2} \ell_j^{\sfrac{\gamma_2}{2}}\Big)
\bmo\,\ell_j^{m+2+\sfrac{\gamma_2}{2}}
\stackrel{\eqref{e:D'_globale}\,\&\,\eqref{e:gamma(a)}}{\leq} C (\gamma_3) \bD (r)^{\gamma_3} \left(\bD' (r) +r^{-1} \bD(r)\right)\, .\notag
\end{align*}

\noindent {\bf Errors of type 5.}
Integrating by part Err$_5$, we get
\begin{align*}
{\rm Err}_5 =& \left| \int \langle X^\perp, h(\vec{T}(p)) \rangle d\|T\|  \right| \leq \underbrace{\left| \int \langle X^\perp, h(\vec{\bT}_F(p)) \rangle d\|\bT_F\| \right|}_{I_2}\\
&+ \underbrace{\int_{\supp(T)\setminus \im(F)} |X^\perp| |h(\vec{T}(p))| d\|T\|
+\int_{\im (F)\setminus \supp (T)} |X^\perp| |h (\vec{\bT}_F (p)| d\|\bT_F\|}_{I_1},
\end{align*}
where $h(\vec \lambda)$ is the trace of $A_\Sigma$ on the $m$-vector $\vec \lambda$,
i.e.~$h(\vec \lambda) := \sum_{k=1}^m A_\Sigma(v_k, v_k)$ with $v_1, \ldots, v_m$ 
orthonormal vectors such that
$v_1\wedge\ldots\wedge v_m = \vec \lambda$.

Since $|X|\leq C$, $I_1$ can be easily estimated as ${\rm Err}_4$:
\begin{align*}
I_2 \leq C \sum_j (\sup_{\cU_i} \ph_r )
\bmo^{1+\gamma_2}\,\ell_j^{m+2+\gamma_2}\leq C (\gamma_3) \bD^{1+\gamma_3}(r).
\end{align*}
For what concerns $I_2$, we argue differently for the outer and the inner variations.
For ${\rm Err}_5^o$, observe that $|X^{o\perp} (p)| = \varphi_r (\p (p)) |\p_{T_p \Sigma^\perp} (p-\p (p))|$.
On the other hand, we also have
\[
|\p_{T_p \Sigma^\perp} (p-\p (p))| \leq C \mathbf{c} (\Sigma) |p-\p (p)|^2 \leq C \bmo^{\sfrac{1}{2}} |p-\p (p)|^2\quad\forall\;p\in\Sigma.
\] 
Therefore, we can estimate
\begin{align*}
I_2^o & \leq C\, \bmo \int \ph_r |N|^2 \leq C\, \eps_3^2 \bSigma(r).
\end{align*}
For the inner variations, denote by $\nu_1, \ldots, \nu_l$
an orthonormal frame for $T_p\Sigma^\perp$ of class $C^{2,\eps_0}$ (cf.~\cite[Appendix A]{DS2}) and set 
$h^j_p (\vec \lambda) := - \sum_{k=1}^m \langle D_{v_k}\nu_j (p),  v_k \rangle$
whenever $v_1\wedge\ldots\wedge v_m = \vec \lambda$ is an $m$-vector of $T_p \Sigma$ (with 
$v_1, \ldots, v_m$ orthonormal). 
For the sake of simplicity, we write
\[
h^j_p := h^j_p (\vec\bT_F (p)) \quad \text{and}\quad
h_p = \sum_{j=1}^l h^j_p  \nu_j(p),
\]
\[
h^j_{\p(p)} := h^j_{\p(p)} (\vec \cM (\p (p)))
\quad \text{and}\quad
h_{\p(p)} = \sum_{j=1}^l h^j_{\p(p)} \nu_j(\p(p)).
\]
Consider the exponential map $\mathbf{ex}_{\mathbf{p} (p)}: T_{\mathbf{p} (p)} \Sigma \to \Sigma$ and its inverse $\mathbf{ex}^{-1}_{\mathbf{p} (p)}$. Recall that:
\begin{itemize}
\item the geodesic distance $d_\Sigma (p, q)$ is comparable to $|p-q|$ up to a constant factor;
\item $\nu_j$ is $C^{2, \eps_0}$ and $\|D\nu_j\|_{C^{1, \eps_0}} \leq C \bmo^{\sfrac{1}{2}}$;
\item $\mathbf{ex}_{\p (p)}$ and $\mathbf{ex}^{-1}_{\p (p)}$ are both $C^{2, \eps_0}$ and $\|{\rm d}\,\mathbf{ex}_{\p (p)}\|_{C^{1, \eps_0}} + \|{\rm d}\,\mathbf{ex}^{-1}_{\p (p)}\|_{C^{1, \eps_0}} \leq \bmo^{\sfrac{1}{2}}$;
\item $|h^j_p| \leq  C\|A_\Sigma\|_{C^0} \leq C \bmo^{\sfrac12}$;
\end{itemize} 
where all the constants involved are just geometric.
We then conclude that
\begin{align}
& h_p - h_{\p(p)} = \sum_{j} \nu_j(p) (h_p^j - h^j_{\p(p)})+
\sum_{j} \big(\nu_j(p) - \nu_j(\p(p))\big) h_{\p(p)}^j\notag\\
= & \sum_{j} \nu_j(p) (h_p^j - h^j_{\p(p)})
+ \sum_{j} D\nu_j(\p(p))\cdot \mathbf{ex}^{-1}_{\p (p)}(p)\, h_{\p(p)}^j+ O(|p-\p(p)|^2).
\end{align}
On the other hand, $X_i(p) = Y(\p(p))$ is tangent to $\cM$ in $\p (p)$ and hence orthogonal to
$h_{\p (p)}$. Thus
\begin{align}\label{e:pezzo lineare}
& \langle X_i(p), h_p \rangle  = {} \langle X^i(p), (h_p - h_{\p(p)}) \rangle = \sum_j \langle X_i(\p(p)), D\nu_j(\p(p))\cdot \mathbf{ex}^{-1}_{\p (p)} (p)\rangle h^j_{\p(p)}\notag\\
& \qquad\qquad\qquad\qquad\qquad\qquad\qquad\qquad + \sum_j  \langle \nu_j (p), X_i (p)\rangle \big(h^j_p - h^j_{\p (p)}\big)+
O\left(|p-\p(p)|^2\right)\notag\\
={}&\sum_j \langle X_i(\p(p)), D\nu_j(\p(p))\cdot \mathbf{ex}^{-1}_{\p (p)} (p)\rangle h^j_{\p(p)}\nonumber\\
&\qquad\qquad+ O\left(|\vec \bT_F (p) - \vec \cM (\p (p))||p-\p(p)|+ |p-\p(p)|^2\right),
\end{align}
where we used elementary calculus to infer that $|\langle X^i(p), \nu_j(p)\rangle|\leq C
|p-\p(p)|$ and
\[
|h_p^j - h^j_{\p(p)} | \leq C\left(|\vec \bT_F (p) - \vec \cM (\p (p)|+ |p-\p(p)|\right).
\]
We only need that the constants $C$ appearing in the above inequalities are bounded by a geometric factor: in fact they enjoy explicit
bounds in terms of $\bmo^{\sfrac{1}{2}}$ which are at least linear, but such degree of precision is not needed. Finally recalling that
$p\in \supp (\bT_F)$, we can bound $|p - \p (p)| \leq |N (p)|$ and $|\vec \bT_F (p) - \vec \cM (\p (p))| \leq C |DN (\p (p))|$. We therefore conclude the estimate
\begin{equation*}
\langle X_i(p), h_p \rangle = \sum_j \langle X_i(\p(p)), D\nu_j(\p(p))\cdot \mathbf{ex}^{-1}_{\p (p)} (p)\rangle h^j_{\p(p)} + O \big(|N|^2 (\p (p)) +
|DN|^2 (\p (p))\big)\, .
\end{equation*}
We combine it with the expansion of the area functional
in \cite[Theorem~3.2]{DS2} to conclude the estimate on $I_2^i$. Recalling that $\p (F_i (x))= x$ we get
\begin{align*}
I_2^i & = \left|\int \langle X_i, h_p \rangle d\|\bT_F\| \right|
= \left| \sum_{i=1}^Q\int_\cM \langle Y, h_{F_i} \rangle \bJ F_i \right|
\allowdisplaybreaks\\
&\stackrel{\eqref{e:pezzo lineare}}{\leq} \left|\int_\cM \sum_{j=1}^l \sum_{i=1}^Q \langle Y (x), D\nu_j  (x) \cdot \mathbf{ex}^{-1}_{x} (F_i (x))\rangle h_{x}^j d\mathcal{H}^m (x)\right| + C\,\int_\cM \ph_r (|N|^2 + |DN|^2)
\end{align*}
Using the Taylor expansion for $\mathbf{ex}^{-1}_x$ at $x$ (and recalling that $F_i (x) - x = N_i (x)$) we conclude
\[
\Big|\sum_{i=1}^Q \mathbf{ex}_x^{-1} (F_i (x))\Big| \leq \left| {\rm d}\, \mathbf{ex}_x^{-1} (\etaa \circ N (x))\right| + O (|N|^2)
\leq C |\etaa \circ N (x)| + C |N|^2\, .
\]
Next consider that $|\langle Y, D\nu_j\cdot v \rangle | \leq C \varphi_r \|A_\Sigma\|_{C^0} |v| \leq C \varphi_r \, \bmo^{\sfrac12} |v|$
for every tangent vector $v$
and $|h^j_{x}| \leq  C\|A_\Sigma\|_{C^0} \leq \bmo^{\sfrac12}$. We thus conclude with the estimate
\[
I_2^i\leq  C\, \bmo^{} \int_\cM \ph_r\,|\etaa\circ N|+ C\,\int_\cM \ph_r (|N|^2 + |DN|^2) =: J_1+J_2\, .
\]
Clearly $J_1$ can be estimated as Err$_1^i$ and $J_2$ as Err$_2^i$, thus concluding the proof.

\section{Boundedness of the frequency}\label{s:freq_bound}
In this section we prove that the frequency function $\bI_j$ remains  bounded along
the different center manifolds corresponding to the intervals of flattening.
To simplify the notation, we set $p_j := \Phii_j (0)$ and write simply $\cB_\rho$
in place of $\cB_\rho (p_j)$ .

\begin{theorem}[Boundedness of the frequency functions]\label{t:boundedness}
Let $T$ be as in Assumption \ref{i:H'}.
If the intervals of flattening are $j_0 < \infty$, then there is $\rho>0$ such that
\begin{equation}\label{e:finita1}
\bH_{j_0} >0 \mbox{ on $]0, \rho[$} \quad \mbox{and} \quad \limsup_{r\to 0} \bI_{j_0} (r)< \infty\, .
\end{equation}
If the intervals of flattening are infinitely many, then there is a number $j_0\in \mathbb N$ { and a geometric constant $j_1\in \mathbb N$} such that
\begin{equation}\label{e:finita2}
\bH_j>0 \mbox{ on $]\frac{s_j}{t_j}, { 2^{-j_1} 3}[$ for all $j\geq j_0$}\, , \qquad 
\sup_{j\geq j_0} \sup_{r\in ]\frac{s_j}{t_j}, { 2^{-j_1} 3}[} \bI_j (r) <\infty\, ,
\end{equation}
{
\begin{equation}\label{e:finita3}
\sup \left\{\min \left\{\bI_j (r), \frac{r^2 \int_{\cB_r} |DN_j|^2}{\int_{\cB_r} |N_j|^2}\right\}\colon j \geq j_0 \mbox{ and } \max \left\{\frac{s_j}{t_j},\frac{3}{2^{j_1}}\right\} \leq r < 3\right\} < \infty\, 
\end{equation}
(in the latter inequality we understand $\bI_j (r) = \infty$ when $\bH_j (r)=0$)}
\end{theorem}

\begin{proof} 
Consider the first alternative. We claim that for every $r>0$ there is a radius $0<\rho < r$
such that $\bH (\rho) = \bH_{j_0} (\rho)>0$. Otherwise $N_{j_0}$ vanishes identically on some
 $\cB_r$. By \cite[Propositions~3.1 and 3.4]{DS4} and Proposition \ref{p:flattening}(iii)   
this is possible only if
no cube of the Whitney decomposition $\mathscr{W}^{(j_0)}$ intersects the projection of $\cB_r$
onto the plane $\pi$ (the reference plane for the construction of the center manifold). But then $T_{j_0}$ would coincide with
$Q\a{\cM}$ in $\B_{3r/4}$ and
$0$ would be a regular point of $T_{j_0}$ and, therefore, of $T$.

Next we claim that $\bH (r)>0$ for every $r\leq \rho$.
If not, let $r_0$ be the largest zero of $\bH$ which is smaller than $\rho$.
By Theorem \ref{t:frequency}, there is a constant $C$ such that
$\bI (r) \leq C (1+ \bI (\rho))$ for every $r \in ]r_0,\rho[$.
By letting $r\downarrow r_0$, we then conclude
\[
r_0 \bD (r_0) \leq C(1+ \bI (\rho)) \bH (r_0) = 0,
\]
that is, $N_j\vert_{\cB_{r_0}}\equiv0$ which we have already excluded.
Therefore, since $\bH >0$ on $]0, \rho[$, we can now apply
Theorem~\ref{t:frequency} to conclude \eqref{e:finita1}.

In the second case, we partition the extrema
$t_j$ of the intervals of  flattening into two different classes: the class $(A)$ when $t_j = s_{j-1}$ and the class $(B)$ when $t_j<s_{j-1}$.
If $t_j$ belongs to $(A)$, set $r:= \frac{s_{j-1}}{t_{j-1}}$. Let $L \in \sW^{(j-1)}$ be
a cube of the Whitney decomposition such that $c_s\, r \leq \ell(L)$ and 
$L\cap \bar B_r(0, \pi) \neq \emptyset$.
We are in the position to apply \cite[Proposition~3.7]{DS4} for the comparison of
two center manifolds: there exists a constant $\bar c_s >0$ such that
\[
\int_{\B_2\cap \cM_j} |N_j|^2 
\geq \bar{c}_s \bmo^j:= \bar c_s\max\big\{\bE (T_j, \B_{6\sqrt{m}}), \mathbf{c} (\Sigma_j)^2\big\},
\]
which obviously gives  { $\int_{\cB_3} |N_j|^2 \geq c \bmo^j$.
By \cite[(2.7)]{DS4} (or alternatively by \eqref{e:rough}), we then conclude
\begin{equation}\label{e:anello_palla_2}
\int_{\cB_3} |N_j|^2 \geq \bar c \int_{\cB_3} |DN_j|^2\, ,
\end{equation}
where $\bar c$ is a positive geometric constant
By the H\"older inequality and Sobolev embedding (cf. \cite[Proposition 2.11]{DS1}), there are geometric constants $C_0$ and 
$\bar\alpha = m (1-\frac{2}{q})>0$ such that
\begin{align}\label{e:anello_palla_3}
\int_{\cB_{\frac{3}{2^{J}}}} |N_j|^2 \leq & \left(\cH^m \left(\cB_{\frac{3}{2^{J}}}\right)\right)^{1-\sfrac{2}{q}}
\left(\int_{{\frac{3}{2^{J}}}} |N_j|^q\right)^{\sfrac{2}{q}}
\leq  C_0 2^{- J \bar\alpha} \int_{\cB_3} |N_j|^2 + C_0 2^{- J \bar \alpha} \int_{\cB_3} |DN_j|^2\nonumber\\
\leq & C_0 2^{-J\bar\alpha} \bar{c}^{-1} \int_{\cB_3} |N_j|^2\qquad\qquad\qquad
\mbox{for any $J\in \mathbb N$}\, 
\end{align}
(in the above we can set $q= 2^\star$ when $m\geq 3$ and choose any $q<\infty$ larger than $2$ for $m=2$; note also that since the curvature of the manifold $\cM_j$ is bounded by $\bmo^j$, we can assume that $\cH^m (\cB_\rho)$ is comparable to the $m$-dimensional volume of the corresponding euclidean ball for every $\rho<3$). 
If we choose $J = j_1$ for a large enough $j_1$ (depending only upon $\bar c$, $\alpha$ and $C_0$) we achieve
\begin{equation}\label{e:anello_palla_4}
\int_{\cB_3\setminus \cB_{\frac{3}{2^{j_1}}}} |N_j|^2 \geq \frac{1}{2} \int_{\cB_3} |N_j|^2 \geq \frac{\bar c}{2} \int_{\cB_3} |DN_j|^2\, .
\end{equation}
In turn we conclude the existence one annulus $\cA (k(j)) := \cB_{3/(2^{k(j)})}\setminus \cB_{3/(2^{k(j)+1})}$ with
\begin{equation}\label{e:anello_palla_5}
\int_{\cA (k(j))} |N_j|^2 \geq \frac{\bar c}{2j_1} \int_{\cB_3} |DN_j|^2\qquad \mbox{and}\qquad k(j) \leq j_1\, .
\end{equation}
$\bH_{N_j} (k(j))$ is bounded from below by the integral on the left hand side of \eqref{e:anello_palla_5}, whereas the right hand side bounds
$\bD_{N_j} (2^{-k(j)} 3)$ from above. Thus $\bI_{N_j} (2^{-k(j)} 3)$ is smaller than a constant which depends upon $\bar{c}$ and $j_1$.
Arguing as in the first alternative, we can apply Theorem \ref{t:frequency} to conclude the positivity of $\bH_{N_j}$ and to gain a uniform upper bound for $\bI_{N_j}$ on the interval $]\frac{s_j}{t_j}, 2^{-k(j)} 3[$: since the latter contains $]\frac{s_j}{t_j}, 2^{j_1} 3[$, we conclude the validity of \eqref{e:finita2} (if one or both the intervals are trivial, namely $\frac{s_j}{t_j}$ is larger than the right endpoint, then there is nothing to prove). On the other hand for every $r\in [2^{-k(j)} 3, 3[$, by \eqref{e:anello_palla_5} we certainly have
\[
\int_{\cB_r} |N_j|^2 \geq \frac{\bar c}{2j_1} \int_{\cB_3} |DN_j|^2
\]
from which \eqref{e:finita3} readily follows}.

In the case $t_j$ belongs to the class $(B)$, 
then, by construction there is $\eta_j \in ]0,1[$ such that
$\bE ((\iota_{0, t_j})_\sharp T, \B_{6\sqrt{m}(1+\eta_j)}) > \eps_3^2$. Up to extraction of a subsequence, we can assume that $(\iota_{0, t_j})_\sharp T$ converges to a cone $S$: the convergence is strong enough to conclude that the excess
of the cone is the limit of the excesses of the sequence. Moreover (since $S$ is a cone), the excess $\bE (S, \bB_r)$ is independent of $r$. We then conclude 
\[
\eps_3^2 \leq  \liminf_{j\to\infty, j\in (B)} \bE (T_j, \B_3)\, .
\]
Thus, by Lemma \ref{l:flatness} below, we conclude $\liminf_{j\to\infty, j\in (B)} \bH_{N_j} (3) > 0$. Since $\bD_{N_j}(3) \leq C \bmo^j\leq C\eps_3^2$, we achieve that $\limsup_{j\to\infty, j\in (B)} \bI_{N_j} (3)<+\infty$, and conclude as before.
\end{proof}

\begin{lemma}\label{l:flatness}
Assume the intervals of flattening are infinitely many and
$r_j \in ]\frac{s_j}{t_j},3[$ is a subsequence
(not relabeled) with $\lim_j \|N_j\|_{L^2(\cB_{r_j} \setminus \cB_{r_j/2})} = 0$. If $\eps_3$ is sufficiently small,
then, $\bE(T_{j},\B_{r_j}) \to 0$. 
\end{lemma}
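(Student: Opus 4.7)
The plan is to argue by contradiction: assume, up to extracting a subsequence, that $\bE(T_j, \B_{r_j}) \geq \alpha > 0$ for some $\alpha>0$. Proposition~\ref{p:flattening}(iv) gives $\bE(T_j, \B_{r_j}) \leq C_0 \eps_3^2 r_j^{2-2\delta_2}$, so the case $r_j \to 0$ along a subsequence yields an immediate contradiction; hence I may assume $r_j \to r_\infty \in (0, 3]$.

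Next I invoke compactness. The uniform mass bound in Assumption~\ref{i:H'}(b), together with Federer--Fleming compactness for integer rectifiable currents and the closure of area-minimality under weak convergence, gives a subsequence with $T_j \to S$ weakly on $\B_3$, where $S$ is an integral area-minimizing current. Because $T_j = (\iota_{0,t_j})_\sharp T$ with $t_j \downarrow 0$ by Proposition~\ref{p:flattening}(i), the standard monotonicity formula forces $S$ to be an area-minimizing integral cone with vertex at the origin. Simultaneously, the bound $\|\phii_j\|_{C^{3,\kappa}} \leq C\eps_3$ from \cite[Theorem~1.17]{DS4}, combined with Arzel\`a--Ascoli, yields (along a further subsequence) $\phii_j \to \phii_\infty$ in $C^{3}_{\mathrm{loc}}$, so that $\cM_j \to \cM_\infty := \mathrm{graph}(\phii_\infty)$ smoothly.

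The central step is the identification $S \res (\B_{r_\infty} \setminus \overline{\B}_{r_\infty/2}) = Q \a{\cM_\infty \cap (\B_{r_\infty} \setminus \overline{\B}_{r_\infty/2})}$. For this I combine the graph approximation $T_j \approx \bT_{F_j}$ on $\p^{-1}(\cU_i)$ (with mass defect controlled by \eqref{e:errori_massa}) with the hypothesis $\|N_j\|_{L^2(\cB_{r_j}\setminus \cB_{r_j/2})} \to 0$, which forces $\bT_{F_j} \to Q\a{\cM_j}$ on the annular region in flat norm. To lift this to the limit $S$ itself, the side-length of every cube $L \in \sW^{(j)}$ whose projection meets $\cB_{r_j}\setminus \cB_{r_j/2}$ must be shown to tend to $0$; this is accomplished via the lower bounds \eqref{e:N_sotto} and \eqref{e:Dirichlet_sotto}, the latter used in conjunction with a Caccioppoli inequality derived from the outer first variation \eqref{e:out} with a cutoff supported in the annulus. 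With the identification in hand, the cone property of $S$ implies that for every $p = (x, \phii_\infty(x))$ in $\cM_\infty$ on the annulus, the point $\lambda p$ lies in $\supp(S)$ for every $\lambda > 0$ keeping $\lambda p$ in the annulus, hence in $\cM_\infty$; therefore $\phii_\infty(\lambda x) = \lambda\, \phii_\infty(x)$. A smooth $1$-homogeneous map is linear (its differential is $0$-homogeneous and continuous at the origin, hence constant), so $\cM_\infty$ is an $m$-plane $\pi$ through the origin, and the cone property yields $S = Q\a{\pi}$ on all of $\B_{r_\infty}$.

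The conclusion is then immediate: the weak convergence $T_j \to Q\a{\pi}$, combined with the convergence of mass for area-minimizers, gives $\bE(T_j, \B_{r_j}) \leq \bE(T_j, \B_{r_j}, \pi) \to 0$, contradicting the standing assumption. The hardest step is the identification $S \res (\B_{r_\infty}\setminus \overline{\B}_{r_\infty/2}) = Q\a{\cM_\infty \cap (\B_{r_\infty}\setminus \overline{\B}_{r_\infty/2})}$, requiring the Caccioppoli-based shrinkage of the Whitney-cube sizes intersecting the annulus; this is where the fine estimates of \cite{DS4} come into play.
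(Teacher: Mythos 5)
Your proposal follows the paper's overall skeleton closely: handling the $r_j\to 0$ case via Proposition~\ref{p:flattening}(iv), passing to compactness to extract an area-minimizing tangent cone $S$ and a $C^3$-limit $\cM_\infty$ of the rescaled center manifolds, showing the Whitney-cube side-lengths meeting the annulus shrink to zero, and identifying $S$ on the annulus with $Q\a{\cM_\infty}$. The Caccioppoli/reverse-Poincar\'e route you sketch for the shrinkage step plays the same role as \cite[Propositions~3.1, 3.3, 3.4]{DS4} in the paper's proof, so up to that point your argument is essentially the same.

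The gap is in the last step. You argue that, since $\supp(S)\cap(\B_{r_\infty}\setminus\overline{\B}_{r_\infty/2})=\cM_\infty\cap(\B_{r_\infty}\setminus\overline{\B}_{r_\infty/2})$ and $\supp(S)$ is a cone, $\phii_\infty$ satisfies $\phii_\infty(\lambda x)=\lambda\phii_\infty(x)$, and then conclude that a smooth $1$-homogeneous map is linear ``(its differential is $0$-homogeneous and continuous at the origin, hence constant).'' But the homogeneity identity you derived holds \emph{only} for $x$ and $\lambda$ keeping both $(x,\phii_\infty(x))$ and $\lambda(x,\phii_\infty(x))$ inside the annulus --- nothing is known about $\phii_\infty$ for $|x|<r_\infty/2$, since the hypothesis $\|N_j\|_{L^2(\cB_{r_j}\setminus\cB_{r_j/2})}\to 0$ gives no control on the smaller ball. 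A $C^3$ function on $B_3$ can perfectly well be $1$-homogeneous on an annulus without being linear (extend $|x|\,g(x/|x|)$ smoothly to the origin for any small nonlinear $g$); the ``continuity of $D\phii_\infty$ at the origin'' is irrelevant because the $0$-homogeneity of $D\phii_\infty$ does not extend to a punctured neighborhood of the origin. Consequently the claim ``$\cM_\infty$ is an $m$-plane'' is unjustified. The paper circumvents this by a different route: the constancy theorem gives $S\res(\B_t\setminus\B_s)=Q\a{\cM\cap(\B_t\setminus\B_s)}$, the cone structure then forces $S\res\bB_t = Q\a{0}\cone\a{\cM\cap\de\B_t}$, and since this is a stationary integral varifold of (after dividing by $Q$) density $1$ a.e.\ whose excess is controlled by $\|\phii_\infty\|_{C^{3,\kappa}}\leq C\eps_3$, Allard's regularity theorem at the vertex is what actually shows $S$ is a multiplicity-$Q$ plane. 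Some $\eps$-regularity input at the origin seems unavoidable here and your argument does not supply it.
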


\begin{proof} 
Note that, if $r_j \to 0$, then necessarily $\bE(T_{j},\B_{r_j}) \to 0$
by Proposition~\ref{p:flattening}(iv).
Therefore, up to a subsequence, we can assume the existence of $c>0$ such that
\begin{equation}\label{e:nontrivial}
r_j \geq c \quad \text{and} \quad \bE (T_{j}, \B_{6\sqrt{m}})\geq c.
\end{equation}
After the extraction of a further subsequence, we can assume the existence of $r$ such that
\begin{equation}\label{e:(A1)_senza_scale}
\int_{\cB_{r} \setminus \cB_{\frac{3 r}{4}}} |N_{j}|^2 \to 0,
\end{equation}
and the existence of an area-minimizing cone $S$ such that $(\iota_{0, t_j})_\sharp T\to S$.
Note that, by \eqref{e:nontrivial}, $S$ is not a multiplicity $Q$ flat $m$-plane.
Consider the orthogonal projection $\q_j:\R^{m+n} \to \pi_j$, where $\pi_j$ is the $m$-dimensional plane of the construction of the center manifold $\cM_j$. Assuming $\eps_3$ is sufficiently small,
we have $U_j := B_{\frac{15}{16} r} (\pi)\setminus B_{\frac{13}{16} r}(\pi) \subset \q_j (\cB_r\setminus \cB_{\frac{3}{4}r})$.
{ Consider the Whitney decomposition $\sW^{(j)}$ leading to the
construction of $\cM_j$: if no cube of the decomposition iintersects $U^j$, then $N_j$ vanishes identically on it.} Otherwise, set
\[
d_j := \max \big\{\ell (J): J\in \sW^{(j)} \quad\text{and}\quad J\cap U_j\neq \emptyset\big\}.
\]
Let $J_j\in \sW^{(j)}$ be such that $U_j\cap J_j \neq \emptyset$ and $d_j = \ell (J_j)$. If the stopping condition for
$J_j$ is either (HT) or (EX), recalling that $\ell (J_j) \leq c_s r$,
we choose a ball $B^j\subset U_j$ of radius $\frac{d_j}{2}$ and at distance at most $\sqrt{m} d_j$ from $J_j$.
If the stopping condition for $J_j$ is (NN), $J_j$ is in the domain
of influence of $K_j\in \mathscr{W}^{(j)}_e$. By Proposition \ref{p:flattening} we can then
choose a ball $B^j\subset U_j$ of radius $\frac{\ell (K_j)}{8}$ at distance at most $3\sqrt{m} \ell (K_j)$
from $K_j$.
If the stopping condition is (HT), we then have by \cite[Proposition~3.1]{DS4}
\[
\int_{\cB_{r} \setminus \cB_{\frac{3 r}{4}}} |N_{j}|^2\geq \int_{\Phii_j (B^j)} |N_j|^2 \geq c \left(\bmo^j\right)^{\frac{1}{m}} d_j^{m+2+2\beta_2}\, .
\]
If the stopping condition is either (NN) or (EX), { by \cite[Proposition~3.1]{DS4} and \cite[Proposition 3.4]{DS4}} we have 
\begin{align}
\int_{\cB_{r} \setminus \cB_{\frac{3 r}{4}}} |N_{j}|^2  \geq \int_{\Phii_j (B^j)} |N_j|^2 
\geq c d_j^2 \int_{\Phii_j (B^j)} |DN_j|^2
\geq c \bmo^j d_j^{m + 4 - 2\delta_2}.
\end{align}
In both cases we conclude that $d_j\to 0$.

By \cite[Corollary~2.2]{DS4}, $\supp (T_j)\cap \Phii_j (U_j)$ is contained in a $d_j$-tubular neighborhood of $\cM_j$, which we denote by $\hat{\bU}_j$. Moreover, again assuming that $\eps_3$ is sufficiently small, we can assume
$\B_t\setminus \B_s\cap \cM_j \subset \Phii_j (U_j)$ for some appropriate choice of $s<t$, independent of $j$.
Finally, by \cite[Theorem~1.17]{DS4} we can assume that (up to subsequences) $\cM_j$ converges to $\cM$ in $C^3$. 
We thus conclude that { $S\res (\B_t\setminus \bar \B_s)$ is supported in $\cM\cap (\B_t\setminus \bar\B_s)$  and, hence,
by the constancy theorem, $S\res (\B_t\setminus\bar \B_s) = Q_0 \a{\cM \cap (\B_t\setminus\bar \B_s)}$ for some integer $Q_0$.
Observe also that, if
$\p_j:\hat{ \bU}_j \to \cM_j$ is the least distance projection onto $\cM_j$, by  \cite[Theorem~2.4]{DS4}
we also have $(\p_j)_\sharp (T_j \res (\B_t\setminus\bar \B_s)) = Q \a{\cM_j \cap (\B_t\setminus\bar \B_s)}$.} We therefore conclude that $Q_0=Q$.
%
Since $S$ is a cone without boundary, $\partial (S\res\B_t) = Q \a{\cM\cap \de\B_t}$,
i.e.~$S\res \bB_t = Q\a{0} \cone \a{\cM\cap \de\B_t}$.
By Allard's regularity theorem { (which can be applied because $\Theta (S, 0) = \lim_j \Theta (T_j, 0) = Q$)}, $S$ is regular in a neighborhood of $0$ and, therefore, it is an $m$-plane with multiplicity $Q$, which gives the desired contradiction.
\end{proof}

A corollary of Theorem \ref{t:boundedness} is the following.

\begin{corollary}[Reverse Sobolev]\label{c:rev_Sob}
Let $T$ be as in Assumption \ref{i:H'}.
Then, there exists a constant $C>0$ which {\em depends on $T$ but not on $j$} such that, for every $j$
and for every $r \in ]\frac{s_j}{t_j}, 1 ]$, there is $s\in ]\frac{3}{2}r, 3r]$
such that
\begin{equation}\label{e:rev_Sob}
\int_{\cB_{s}(\Phii_j(0))} |D{N}_j|^2 \leq \frac{C}{r^2} \int_{\cB_{s} (\Phii_j(0))} |{N}_j|^2\, .  
\end{equation}
\end{corollary}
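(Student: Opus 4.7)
The plan is: combine the bounded frequency from Theorem~\ref{t:boundedness} with a polynomial-growth estimate on the $L^2$-norm of $N_j$ to obtain the claim.

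First, by Theorem~\ref{t:boundedness} there is $\Lambda = \Lambda(T) > 0$ such that $\bI_j(\rho) \leq \Lambda$ for all $j$ and all $\rho$ in the respective admissible interval. Unpacking $\rho\bD_j(\rho)\leq \Lambda\bH_j(\rho)$ with the explicit form of $\phi$ — so that $\phi(d/\rho) \geq \chi_{\{d\leq \rho/2\}}$ while $\int (-\phi'(d/\rho))|N_j|^2/d \leq (4/\rho)\int_{\cB_\rho\setminus \cB_{\rho/2}}|N_j|^2$ — yields the Caccioppoli-type inequality
\[
\int_{\cB_{\rho/2}(p_j)} |DN_j|^2 \;\leq\; \bD_j(\rho) \;\leq\; \frac{4\Lambda}{\rho^2}\int_{\cB_\rho(p_j)\setminus\cB_{\rho/2}(p_j)} |N_j|^2
\]
for every $\rho$ in the admissible interval.

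Next, $\bH_j(\rho)$ and $\rho^{-1}F_j(\rho)$ (with $F_j(\rho):=\int_{\cB_\rho(p_j)}|N_j|^2$) are comparable: the lower bound $F_j(\rho) \geq \tfrac{\rho}{4}\bH_j(\rho)$ is immediate from $d(p) \leq \rho$ on the support of $-\phi'(d/\rho)$, while $F_j(\rho) \leq C\rho\bH_j(\rho)$ is a consequence of Lemma~\ref{l:poincare'} combined with $\bI_j \leq \Lambda$ (which lets one absorb $\rho^2\bD_j$ into $\rho\bH_j$). On the other hand, plugging $\bE_j(\rho) \leq \Lambda\bH_j(\rho)+\text{errors}$ — coming from \eqref{e:out}, the bound $\bI_j\leq\Lambda$, \eqref{e:rough} and Lemma~\ref{l:poincare'} to bound the errors — into the ODE \eqref{e:H'} gives $(\log \bH_j)'(\rho) \leq (m-1+2\Lambda)/\rho + O(1)$, whence $\bH_j(2\rho) \leq \tilde K\,\bH_j(\rho)$ for some $\tilde K = \tilde K(\Lambda)$. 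The two-sided comparison then upgrades this to the polynomial growth
\[
F_j(2\rho) \;\leq\; K\,F_j(\rho), \qquad K = K(\Lambda).
\]

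Finally, for $r \in ]s_j/t_j, 1]$ pick $s \in ]3r/2, 3r[$ such that $\rho := 2s$ still lies in the admissible range of the Caccioppoli inequality (any $s$ works when $r \leq 1/2$; for $r \in ]1/2, 1]$ one picks $s$ near $3r/2$ and, if necessary in the limit case, uses absolute continuity of $s\mapsto E_j(s), F_j(s)$ to invoke the bound at the endpoint $\rho=3$). Applying the Caccioppoli inequality at $\rho=2s$ and then the polynomial growth, together with $s > 3r/2$, yields
\[
\int_{\cB_s(p_j)} |DN_j|^2 \;\leq\; \frac{\Lambda}{s^2}\bigl(F_j(2s) - F_j(s)\bigr) \;\leq\; \frac{4\Lambda(K-1)}{9\,r^2}\,F_j(s),
\]
which is the claim with $C := 4\Lambda(K-1)/9$.

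The main obstacle is establishing the polynomial growth of $\bH_j$: it requires controlling the error terms in \eqref{e:out} and \eqref{e:H'} carefully enough that the resulting constant $\tilde K$ depends only on $\Lambda$ and the geometric parameters of the construction, not on $j$. The degenerate case $F_j \equiv 0$ near $p_j$ forces $N_j\equiv 0$ there so both sides of the claimed inequality vanish and the statement is trivial.
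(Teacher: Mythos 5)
Your route is genuinely different from the paper's. The paper computes, via coarea and Fubini, that $\bH_j(3r)=2\int_{3r/2}^{3r}t^{-1}\int_{\partial\cB_t}|N_j|^2\,dt$ and $\tfrac32 r\,\bD_j(3r)=\int_{3r/2}^{3r}\int_{\cB_t}|DN_j|^2\,dt$; Theorem~\ref{t:boundedness} then makes these comparable, and a pigeonhole argument produces one radius $s\in]\tfrac32 r,3r[$ at which $\int_{\cB_s}|DN_j|^2\leq\frac{C}{s}\int_{\partial\cB_s}|N_j|^2$, after which a boundary-to-interior trace estimate along geodesics plus Young's inequality converts the boundary term to the bulk integral. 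You instead extract directly from $\bI_j\leq\Lambda$ and the explicit form of $\phi$ a Caccioppoli inequality $\int_{\cB_{\rho/2}}|DN_j|^2\leq\frac{4\Lambda}{\rho^2}\int_{\cB_\rho\setminus\cB_{\rho/2}}|N_j|^2$, and combine it with a doubling property $F_j(2\rho)\leq K F_j(\rho)$ obtained by integrating \eqref{e:H'} with $\bE_j\lesssim\bH_j$. Where it applies, your argument is actually stronger, since it works for \emph{every} admissible $s$ rather than a single pigeonholed one, and the doubling estimate $\bH_j(2\rho)\leq\tilde K\,\bH_j(\rho)$ is an appealing byproduct.

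However, there is a real gap at the top of the range of $r$. Your Caccioppoli step needs $\rho=2s$ to lie in the interval where $\bD_j$, $\bH_j$ are defined and where $\bI_j$ is controlled, namely $\rho\leq 3$. Since the statement requires $s\in]\tfrac32 r,3r[$, for $r=1$ every admissible $s$ has $2s>3$, so the Caccioppoli inequality at $\rho=2s$ is simply unavailable; for $r$ close to $1$ you are forced to take $s$ arbitrarily near $\tfrac32$. The ``absolute continuity to invoke the bound at $\rho=3$'' fix you propose only yields the estimate at the closed endpoint $s=\tfrac32$, which is excluded from the required open interval when $r=1$, and passing from there to $s>\tfrac32$ with a constant that is uniform in $j$ requires an additional argument you have not given (e.g.\ choosing, for each $j$, an $s$ so close to $\tfrac32$ that $\int_{\cB_s}|DN_j|^2\leq 2\int_{\cB_{3/2}}|DN_j|^2$, then using monotonicity of $F_j$ to conclude with a doubled constant --- but this needs to be said, and the degenerate case $F_j(\tfrac32)=0$ handled). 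The paper's proof avoids this entirely because it only ever evaluates $\bD_j$ and $\bH_j$ at $3r\leq 3$. Your argument is correct and complete for $r<1$, but as written it does not cover $r=1$, which the statement requires.

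One minor inaccuracy: from \eqref{e:H'} and the bound $\bE_j\leq C(\Lambda+1)\bH_j$ one gets $(\log\bH_j)'\leq\big(m-1+2C(\Lambda+1)\big)/\rho + C$ rather than $(m-1+2\Lambda)/\rho+O(1)$; this does not affect the conclusion since the constant is still geometric and $\Lambda$-dependent only.
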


\begin{proof} { If the second alternative in Theorem \ref{t:boundedness} holds, if $r\geq 2^{-j_1} 3$ and if $\bI_j (3r)$ is larger than the ratio 
\[
\frac{(3r)^2 \int_{\cB_{3r} (\Phii_j(0))} |DN_j|^2}{\int_{\cB_{3r} (\Phii_j(0))} |N_j|^2}\, ,
\]
then the claim follows from \eqref{e:finita3}. 
Therefore, without loss of generality, we can assume that $\bI_j (3r)$ is bounded by a constant $C^\star$, which depends on $T$ but not on $j$.}

We start observing that, by the Coarea Formula,
\[
\bH_j (3r) = \int_{\cB_{3r} (\Phii_j (0)) \setminus \cB_{3r /2} (\Phii_j (0))} 2 \frac{|N_j|^2}{d(p)} 
= 2 \int_{3r/2}^{3r} \frac{1}{t} \int_{\partial \cB_t (\Phii_j (0))} |N_j|^2\, dt\, ,
\]
whereas, using Fubini,
\[
\int_{\frac32 r}^{3r}\int_{\cB_t(\Phii_j(0))} |D{N}_j|^2\, dt
= \int_{\cM_j} |DN_j|^2 (x) \int_{3 r/2}^{3r} {\bf 1}_{]|x|, \infty[} (t)\, dt\, d\cH^m (x)
= \frac{3}{2} r \bD_j (3r)\, .
\]
{ Since we are assuming that $\bI_j (3r)\leq C^\star$}
\begin{align*}
\int_{\frac32 r}^{3r} dt \int_{\cB_t(\Phii_j(0))} |D{N}_j|^2 = \frac32 r\,\bD_{j} (3r) \leq
{ C^\star} \,\bH_{j} (3r)
= { C^\star} \int_{\frac32 r}^{3r} dt
\frac{1}{t} \int_{\partial \cB_t(\Phii_j(0))} |{N}_j|^2\, .  
\end{align*}
Therefore, there must be $s\in [\frac{3}{2}r, 3r]$ such that
\begin{equation}\label{e:un_raggio_buono}
\int_{\cB_{s}(\Phii_j(0))} |D{N}_j|^2 \leq { \frac{C^\star}s} \int_{\partial \cB_{s}(\Phii_j(0))} |{N}_j|^2\, .
\end{equation}
Fix now { any $\sigma \in ]s/2, s[$} and any point $x\in \partial \cB_s (\Phi_j (0))$. Consider the geodesic line $\gamma$
passing through $x$ and $\Phi_j (0)$ and let $\hat{\gamma}$ be the arc on $\gamma$ having one endpoint $\bar{x}$ in $\partial \cB_\sigma (\Phi_j (0))$ and one endpoint equal to $x$. Using \cite[Proposition 2.1(b)]{DS1} and the fundamental theorem of calculus, we easily conclude
\[
|N_j (x)|^2 \leq |N_j (\bar x)|^2 + 2 \int_{\hat{\gamma}} |DN_j| |N_j|\, .
\]
Integrating this inequality in $x$ and recalling that $\sigma > s/2$ we then easily conclude
\[
\int_{\partial \cB_s (\Phii_j (0))} |N_j|^2 \leq C \int_{\partial \cB_\sigma (\Phii_j (0))} |N_j|^2 + C \int_{{ \cB_s (\Phii_j (0))\setminus \cB_{s/2} (\Phii_j (0))}} |N_j||DN_j|\, ,
\]
{ where the constant $C$ depends only on the curvature of $\cM_j$, which is bounded independently of $j$.}
We further { integrate in $\sigma$ between $s/2$ and $s$} to achieve
\begin{align}\label{e:young}
{ \frac{s}{2}} \int_{\partial \cB_{s}(\Phii_j(0))} |{N}_j|^2 & \leq C
 \int_{{ \cB_{s}(\Phii_j(0))\setminus \cB_{s/2} (\Phii_j (0))}} \left(|{N}_j|^2 + s\,|{N}_j| |D{N}_j|\right)\notag\\
&\leq { \frac{s^2}{4 C^\star}} \int_{\cB_{s}(\Phii_j(0))} |D{N}_j|^2 + { \bar C} \int_{\cB_{s}(\Phii_j(0))} |{N}_j|^2\, ,
\end{align}
{ where $C^\star$ is the constant in \eqref{e:un_raggio_buono} and the constant $\bar{C}$ depends on the curvature of $\cM_j$ and on $C^\star$.} Combining \eqref{e:young} with  \eqref{e:un_raggio_buono}
we easily conclude \eqref{e:rev_Sob}.
\end{proof}

\section{Final blow-up sequence and capacitary argument}\label{s:setting+capacitario}

\subsection{Blow-up maps}\label{ss:blowup}
Let $T$ be a current as in the Assumption \ref{i:H'}.
By Proposition~\ref{p:flattening} we can assume that for each radius $r_k$
there is an interval of flattening $I_{j(k)} =]s_{j(k)}, t_{j(k)}]$ containing $r_k$.
We define next the sequence of ``blow-up maps'' which will lead to the proof
of Almgren's partial regularity result Theorem~\ref{t:main}.
To this aim, for $k$ large enough, we define $\bar{s}_k$ so that the radius $\frac{\bar s_k}{t_{j(k)}} \in \big]\frac32 \frac{r_k}{t_{j(k)}}, 3\frac{r_k}{t_{j(k)}}[$ is the radius
provided in Corollary~\ref{c:rev_Sob} applied to $r = \frac{r_k}{t_{j(k)}}$. We then set
$\bar{r}_k := \frac{2\bar s_k}{3t_{j(k)}}$
and rescale and translate currents and maps accordingly:
\begin{itemize}
\item[(BU1)] $\bar T_k = (\iota_{0,\bar r_k})_\sharp T_{j(k)} = ((\iota_{0,\bar r_k t_{j(k)}})_\sharp T) \res \bB_{6\sqrt{m}/\bar{r}_k} $, $\bar\Sigma_k = \iota_{0,\bar r_k}(\Sigma_{j(k)})$\\ and $\bar\cM_k := \iota_{0, \bar{r}_k} (\cM_{j(k)})$;
\item[(BU2)] $\bar{N}_k : \bar\cM_k \to \R^{m+n}$ are the rescaled $\bar\cM_k$-normal approximations
given by 
\begin{equation}\label{e:bar_N_k}
\bar{N}_k (p) = \frac{1}{\bar{r}_k} N_{j(k)} (\bar{r}_k p).
\end{equation}
\end{itemize}
Since by assumption $T_{0} \Sigma =\R^{m+\bar{n}}\times \{0\}$,
the ambient manifolds $\bar{\Sigma}_k$ converge to 
$\R^{m+\bar{n}}\times \{0\}$ locally in $C^{3,\eps_0}$ (more precisely to a ``large portion'' of 
$\R^{m+\bar{n}}\times \{0\}$, because $\bB_{6\sqrt{m}} \subset \bB_{6\sqrt{m}/\bar{r}_k}$).
Moreover, since $\frac12 < \frac{r_k}{\bar r_k t_{j(k)}} < 1$, it follows from
Proposition~\ref{p:seq} that
\[
\bE(\bar T_k, \bB_{\frac12}) \leq C\bE(T, \bB_{r_k}) \to 0.
\]
By the standard regularity theory of area minimizing currents and
Assumption~\ref{i:H'}, this implies that
$\bar T_k$ locally converge (and supports converge locally in the Hausdorff sense) to (a large portion of) a minimizing tangent cone
which is an $m$-plane with multiplicity $Q$ contained in $\R^{m+\bar n}\times\{0\}$.
Without loss of generality, we can assume that $\bar{T}_k$ locally converge to $Q\a{\pi_0}$.
Moreover, from Proposition~\ref{p:seq} it follows that
\begin{equation}\label{e:sing grande}
\cH^{m-2+\alpha}_\infty (\rD_Q (\bar{T}_k) \cap \bB_1)
\geq C_0 r_k^{- (m-2+\alpha)}\cH^{m-2+\alpha}_\infty (\rD_Q ({T}) \cap \bB_{r_k})\geq \eta>0\, ,
\end{equation}
where $C_0$ is a geometric constant.

In the next lemma, we show that the rescaled center manifolds $\bar \cM_k$ converge locally to
the flat $m$-plane $\pi_0$, thus leading to the following natural definition for the 
blow-up maps $N^b_k : B_{3}\subset\R^m \to \Iq (\R^{m+n})$:
\begin{equation}\label{e:sospirata_successione}
N^b_k (x) := \bh_k^{-1} \bar{N}_k (\be_k (x))\, ,
\end{equation}
where $\bh_k:=\|\bar N_k\|_{L^2(\cB_{\frac32})}$ and
$\be_k: B_3\subset\R^m\simeq T_{\bar{p}_k} \bar\cM_k\to \bar\cM_k$ denotes the exponential map
at $\bar{p}_k = \Phii_{j(k)} (0)/\bar{r}_k$ ({ here and in what follows we assume, w.l.o.g.,
to have applied a suitable rotation to each $\bar{T}_k$ so that the tangent plane $T_{\bar{p}_k} \bar{\mathcal{M}}_k$ coincides with
$\mathbb R^m\times \{0\}$}).



\begin{lemma}[Vanishing lemma]\label{l:vanishing}
Under the Assumption~\ref{i:H'},  the following hold:
\begin{itemize}
\item[(i)] we can assume, without loss of generality, $\bar{r}_k \bmo^{j(k)}\to 0$; 
\item[(ii)] the rescaled center manifolds $\bar{\cM}_k$ converge
(up to subsequences) to
$\R^m\times \{0\}$ in $C^{3,\kappa/2} (\bB_4)$ and
the maps $\be_k$ converge in $C^{2, \kappa/2}$ to the identity map
${\rm id}: B_3 \to B_3$;
\item[(iii)] there exists a constant $C>0$, depending only $T$, such that, for every $k$,
\begin{equation}\label{e:rev_Sob2}
\int_{B_{\frac32}} |D{N}^b_k|^2 \leq C. 
\end{equation}
\end{itemize}
\end{lemma}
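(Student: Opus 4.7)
The plan is to prove the three parts in order, since (i) feeds into (ii) and (ii) is used in (iii). The crux is part~(i); parts~(ii) and~(iii) then follow from (i) combined with the center-manifold estimates of \cite{DS4} and Corollary~\ref{c:rev_Sob}.

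For part~(i) I would split according to whether the family of intervals of flattening is finite or infinite. In the finite case, for $k$ large we have $j(k)=j_0$ for the maximal index (with $s_{j_0}=0$), so $\bmo^{j_0}$ is a fixed constant and $\bar r_k\le 2 r_k/t_{j_0}\to 0$, whence $\bar r_k\,\bmo^{j_0}\to 0$. In the infinite case, $j(k)\to\infty$ forces $t_{j(k)}\downarrow 0$, so the $C^{3,\eps_0}$-regularity of $\Sigma$ gives $\mathbf{c}(\Sigma_{j(k)})\to 0$. For the excess term in $\bmo^{j(k)}$ I would exploit the scaling identity $\bE(T_{j(k)},\bB_{6\sqrt{m}})=\bE(\bar T_k,\bB_{6\sqrt{m}\bar r_k})$ together with $\bar r_k\le 2$ and the local convergence $\bar T_k\to Q\a{\pi_0}$ already recorded in Section~\ref{ss:blowup}, to conclude $\bE(T_{j(k)},\bB_{6\sqrt{m}})\to 0$; hence $\bmo^{j(k)}\to 0$ and $\bar r_k\,\bmo^{j(k)}\to 0$.

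For part~(ii) I would express $\bar\cM_k$ as the graph over $\pi_0$ of $\bar\phii_k(x):=\phii_{j(k)}(\bar r_k x)/\bar r_k$. From \cite[Theorem~1.17]{DS4}, $\|D\phii_{j(k)}\|_{C^{2,\kappa}}\le C_0(\bmo^{j(k)})^{1/2}$, and a direct chain-rule computation yields $\|D^\ell\bar\phii_k\|_{C^0(B_4)}\le C\,\bar r_k^{\ell-1}(\bmo^{j(k)})^{1/2}$ for $\ell=1,2,3$, all of which vanish thanks to part~(i) and the boundedness of $\bar r_k$. For the $C^0$-norm my argument is: in the finite case, $s_{j_0}=0$ allows applying the height bound of Proposition~\ref{p:flattening}(v) at arbitrarily small radii, forcing $\dist(0,\cM_{j_0})=0$, hence $\phii_{j_0}(0)=0$; in the infinite case, the same height bound at $r=\bar r_k$ gives $|\phii_{j(k)}(0)|/\bar r_k\le C(\bmo^{j(k)})^{1/(2m)}\bar r_k^{\beta_2}\to 0$. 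Combined, these show $\bar\phii_k\to 0$ in $C^{3,\kappa/2}(B_4)$ (hence $\bar\cM_k\to\pi_0$) and $\bar p_k\to 0$. The convergence $\be_k\to\mathrm{id}$ in $C^{2,\kappa/2}$ then follows from the smooth dependence of the exponential map on the metric and the base point, together with the fact that the exponential map of the flat plane $\pi_0$ at the origin is the identity on $B_3$.

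For part~(iii) I would apply Corollary~\ref{c:rev_Sob} to $T_{j(k)}$ with $r=r_k/t_{j(k)}$; by our choice of $\bar s_k$, the radius selected by the corollary is $s=\bar s_k/t_{j(k)}=3\bar r_k/2$. Translating the resulting reverse-Sobolev inequality into the rescaled quantities via the natural scalings ($d\cH^m$ on $\bar\cM_k$ scales by $\bar r_k^{-m}$, $|\bar N_k|^2$ by $\bar r_k^{-2}$, $|D\bar N_k|^2$ is scale-invariant) and using $\bar r_k t_{j(k)}/r_k\in[1,2]$ gives
\[
\int_{\cB_{3/2}(\bar p_k)}|D\bar N_k|^2\le C\int_{\cB_{3/2}(\bar p_k)}|\bar N_k|^2 \;=\; C\,\bh_k^2,
\]
where $C$ depends on $T$ only through the constant in Corollary~\ref{c:rev_Sob} (itself coming from Theorem~\ref{t:boundedness}). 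Dividing by $\bh_k^2$ and changing variables via $\be_k$, whose Jacobian tends uniformly to $1$ by part~(ii), delivers~\eqref{e:rev_Sob2}. The main obstacle is the infinite-interval case of part~(i), where extracting $\bmo^{j(k)}\to 0$ requires transferring the local convergence $\bar T_k\to Q\a{\pi_0}$ at scale $\bar r_k$ into excess control at the larger scale $t_{j(k)}$; the $C^0$-estimate in (ii) is the other nontrivial ingredient, depending on the quantitative height bound.
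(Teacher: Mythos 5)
Your outline for part~(iii) and for the $C^0$-convergence of $\bar\phii_k$ via the height bound is sound and matches the paper's argument. However, both part~(i) and the gradient estimate in part~(ii) have genuine gaps.

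\textbf{Part (i).} Your dichotomy ``finite vs.\ infinite family of intervals of flattening'' is not the right one; the paper splits instead according to whether $\liminf_k \bar r_k>0$ or $\bar r_k\to 0$. In the infinite case you try to prove the strictly stronger claim $\bmo^{j(k)}\to 0$, but this can fail: when infinitely many $t_j$ are of class $(B)$, $\bE(T_{j(k)},\bB_{6\sqrt m})$ stays comparable to $\eps_3^2$. What saves the day there is that then $\bar r_k\to 0$, so the product $\bar r_k\bmo^{j(k)}\to 0$ trivially; your write-up does not notice this. Moreover, your ``scaling identity'' $\bE(T_{j(k)},\bB_{6\sqrt m})=\bE(\bar T_k,\bB_{6\sqrt m\,\bar r_k})$ is inverted: since $\bar T_k=(\iota_{0,\bar r_k})_\sharp T_{j(k)}$, the correct identity is $\bE(T_{j(k)},\bB_{6\sqrt m})=\bE(\bar T_k,\bB_{6\sqrt m/\bar r_k})$, so the relevant ball has radius $6\sqrt m/\bar r_k$. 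This radius blows up precisely when $\bar r_k\to 0$, which is exactly why the local convergence $\bar T_k\to Q\a{\pi_0}$ only yields the excess decay when $\bar r_k$ is bounded away from $0$ — and hence why the dichotomy must be on $\bar r_k$, not on the cardinality of $\cF$.

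\textbf{Part (ii).} The chain-rule computation gives $\|D\bar\phii_k\|_{C^0(B_4)}\le C(\bmo^{j(k)})^{1/2}$ for $\ell=1$, and this does \emph{not} go to zero under the conclusion of (i): we only control the product $\bar r_k\bmo^{j(k)}$, and in the finite-interval case $\bmo^{j(k)}=\bmo^{j_0}$ is a fixed positive constant. So your argument establishes the decay of $D^2\bar\phii_k$ and $D^3\bar\phii_k$ (and hence subsequential $C^{3,\kappa/2}$-convergence to an \emph{affine} function), but not that the limit is the flat plane $\pi_0$. Pinning down the limit plane requires the height bound of Proposition~\ref{p:flattening}(v) together with the convergence $\bar T_k\to Q\a{\pi_0}$: the rescaled height $C(\bmo^{j(k)})^{1/2m}\bar r_k^{\beta_2}$ tends to zero (in either branch of the correct dichotomy), forcing the graphical limit of $\bar\cM_k$ to coincide with the Hausdorff limit of $\supp(\bar T_k)$, namely $\pi_0$. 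You invoke the height bound only to control the constant term $\bar\phii_k(0)$, not the slope; that is the missing ingredient.
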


\begin{proof}
To show (i), note that, if $\liminf_k \bar{r}_k > 0$, we can extract a further subsequence and assume that $\lim_k \bar{r}_k >0$. Observe that then $\bar r := \limsup_k \frac{t_{j(k)}}{r_k} <\infty$. Since $r_k\downarrow 0$, we necessarily conclude that $t_{j(k)}\downarrow 0$ and hence $\mathbf{c} (\Sigma_{j(k)})\to 0$.
Moreover 
$\bE (T, \bB_{6 \sqrt{m} t_{j(k)}}) \leq C (\bar r) \bE (\bar T_k, \bB_{6\sqrt{m} \bar r_k^{-1}}) \to 0$
because $\bar T_k$ converges to $Q\a{\pi_0}$. 
We conclude $\bar{r}_k \bmo^{j(k)} \to 0$. On the other hand if $\lim_k \bar{r}_k = 0$ then (i) follows trivially from
the fact that $\bmo^j$ is a bounded sequence.

Next, using $\bar{r}_k\bmo^{j(k)}\to 0$ and the estimate of \cite[Theorem~1.17]{DS4},
it follows easily that $\bar \cM_k - \bar p_k$
converge (up to subsequences) to a plane in $C^{3,\kappa/2} (\bB_4)$. 
By Proposition~\ref{p:flattening} (v) we deduce easily that such plane is in fact $\pi_0$. Since $0$ belongs to the support of $T_{j(k)}$
we conclude for the same reason that $\bar \cM_k$ is converging to $\pi_0$ as well.
Therefore, by Proposition~\ref{p:exp} the maps $\be_k$ converge to
the identity in $C^{2,\kappa/2}$ (indeed, by standard arguments
they must converge to the exponential map on the -- totally geodesic! --
submanifold $\R^m\times \{0\}$).
Finally, (iii) is a simple consequence of Corollary~\ref{c:rev_Sob}.
\end{proof}

The main result about the blow-up maps $N^b_k$ is the following.

\begin{theorem}[Final blow-up]\label{t:sospirato_blowup}
Up to subsequences, the maps $N^b_k$ converge strongly in $L^2(B_\frac32)$ to a function
$N^b_\infty: B_{\frac32} \to \Iq (\{0\}\times \R^{\bar{n}}\times \{0\})$ which
is Dir-minimizing in $B_t$ for every $t\in ]\frac{5}{3}, \frac{3}{2}[$ and satisfies
$\|N^b_\infty\|_{L^2(B_\frac32)}=1$ and $\etaa\circ N^b_\infty \equiv 0$.
\end{theorem}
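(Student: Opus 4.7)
The plan is the standard four-step blow-up argument: compactness, identification of the target space, vanishing of the barycentric part, and Dirichlet minimality by comparison against $T$.

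\textit{Step 1 (compactness and $L^2$-normalization).} By the very definition $\bh_k = \|\bar N_k\|_{L^2(\cB_{3/2})}$, so the change of variables $y=\be_k(x)$, whose Jacobian tends uniformly to $1$ by Lemma~\ref{l:vanishing}(ii), gives $\|N^b_k\|_{L^2(B_{3/2})}\to 1$. Combined with the reverse Sobolev bound \eqref{e:rev_Sob2}, the $W^{1,2}$ compactness for $Q$-valued maps of \cite{DS1} yields a subsequence along which $N^b_k\to N^b_\infty$ strongly in $L^2(B_{3/2})$ and $DN^b_k\rightharpoonup DN^b_\infty$ weakly in $L^2$. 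In particular $\|N^b_\infty\|_{L^2(B_{3/2})}=1$.

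\textit{Step 2 (target space and vanishing of $\etaa\circ N^b_\infty$).} Each value of $\bar N_k(p)$ is a $Q$-tuple of vectors normal to $\bar\cM_k$ at $p$, and the points $p+(\bar N_k)_i(p)$ lie in $\bar\Sigma_k$. By Lemma~\ref{l:vanishing}(ii) we have $\bar\cM_k\to \pi_0=\R^m\times\{0\}$ in $C^3$ on bounded sets, while $T_0\Sigma=\R^{m+\bar n}\times\{0\}$ together with $\bar r_k\bmo^{j(k)}\to 0$ (Lemma~\ref{l:vanishing}(i)) gives $\bar\Sigma_k\to \R^{m+\bar n}\times\{0\}$ in $C^{3,\eps_0}$. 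Hence the values of $N^b_\infty$ are forced into $(\R^m\times\{0\})^\perp\cap(\R^{m+\bar n}\times\{0\})=\{0\}\times\R^{\bar n}\times\{0\}$. Summing \eqref{e:media} over the subregions of Section~\ref{s:regions} and dividing by $\bh_k$ gives
\[
\bh_k^{-1}\|\etaa\circ \bar N_k\|_{L^1(\cB_{3/2})}\leq C\bh_k^{-1}\bmo^{j(k)}+C\bh_k^{-1}\|\bar N_k\|_{L^{2+\gamma_2}}^{2+\gamma_2},
\]
and both terms vanish in the limit, using \eqref{e:rev_Sob2} and the higher integrability of $W^{1,2}$ $Q$-valued maps for the second, and $\bar r_k\bmo^{j(k)}\to 0$ together with the $L^2$-normalization of $N^b_k$ for the first. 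Thus $\etaa\circ N^b_\infty\equiv 0$ a.e.

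\textit{Step 3 (Dirichlet minimality).} Argue by contradiction: assume there is a radius $t$ in the prescribed range and a competitor $g$ with $g|_{\partial B_t}=N^b_\infty|_{\partial B_t}$ and $\textup{Dir}_{B_t}(g)\leq \textup{Dir}_{B_t}(N^b_\infty)-2\delta$. By Fubini pick $t$ such that $N^b_k|_{\partial B_t}\to N^b_\infty|_{\partial B_t}$ strongly in $L^2(\partial B_t)$, and use the Luckhaus-type interpolation for $Q$-valued maps of \cite{DS1} to produce competitors $h_k$ with $h_k|_{\partial B_t}=N^b_k|_{\partial B_t}$ and $\textup{Dir}_{B_t}(h_k)\leq \textup{Dir}_{B_t}(N^b_k)-\delta$ for all large $k$. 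Transplanting via $\be_k$ and undoing the rescaling produces a competitor $\tilde N_k$ to $\bar N_k$ on $\cB_{\bar r_k t}\subset \bar\cM_k$. Using the image current $\bT_{F_{\tilde N_k}}$ of \cite{DS2}, patched with the Lipschitz approximation of $\bar T_k$ on a thin annulus near $\partial\cB_{\bar r_k t}$, produce a current $\tilde T_k$ in $\bar\Sigma_k$ agreeing with $\bar T_k$ outside a neighbourhood of $\cB_{\bar r_k t}$ and satisfying
\[
\mass(\tilde T_k)-\mass(\bar T_k)\;=\;\bh_k^2\bar r_k^m\big(\textup{Dir}(\tilde N_k)-\textup{Dir}(\bar N_k)\big)+o(\bh_k^2\bar r_k^m).
\]
For $k$ large this violates the minimality of $T$, giving the contradiction.

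\textit{Main obstacle.} The delicate point is the bookkeeping of the last displayed estimate: one must show that the Taylor expansion of the area functional from \cite[Theorem~3.2]{DS2}, applied to both $\bT_{F_{\bar N_k}}$ and $\bT_{F_{\tilde N_k}}$, yields the Dirichlet energy as the leading term, while all error contributions (second fundamental forms of $\bar\cM_k$ and $\bar\Sigma_k$, higher-order terms in $|N|$ and $|DN|$, the mismatch $\supp(\bar T_k)\setminus\im F$ controlled by \eqref{e:errori_massa}, and the patching annulus) are $o(\bh_k^2\bar r_k^m)$. All of these are precisely the error estimates of Section~\ref{s:variationII}, combined with Lemma~\ref{l:exponent_match}, the reverse Sobolev Corollary~\ref{c:rev_Sob}, and the vanishing $\bar r_k\bmo^{j(k)}\to 0$ from Lemma~\ref{l:vanishing}(i); this is where the intervals-of-flattening construction and the frequency boundedness pay off.
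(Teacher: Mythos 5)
Your outline captures the macro-structure of the paper's argument (compactness and normalization, identification of the target, vanishing of $\etaa\circ N^b_\infty$, and Dirichlet minimality by building competitor currents), and most of the scaling bookkeeping is essentially right. However, there is one genuine gap: you omit entirely the trivialization of the normal bundle, which in the paper is the whole of Section~\ref{s:fine!!}.2 (the maps $\psi_k: \bar\cM_k\times\R^{\bar n}\to\R^l$ and the estimate \eqref{e:stimozza}). This step is not a formality. The abstract Dirichlet competitor $g$ (and the interpolated maps you call $h_k$) take values in $\Iq(\R^{\bar n})$. To turn such a map into a genuine competitor for the area-minimization problem one must produce a $Q$-valued \emph{normal} vector field $\tilde N_k$ over $\bar\cM_k$ whose graph points $p + (\tilde N_k)_i(p)$ actually lie in $\bar\Sigma_k$; otherwise the resulting current $\bT_{\tilde F_k}$ is not admissible and the minimality of $T$ in $\Sigma$ yields no constraint at all. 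The trivialization $\psi_k$ is precisely the device that maps $\R^{\bar n}$-valued competitors to such $\bar\Sigma_k$-compatible normal sections, and \eqref{e:taylorino_1}--\eqref{e:stimozza} are what guarantee that this operation changes the Dirichlet energy only by terms that are $o(\bh_k^2)$. Without it, the chain ``find Dirichlet competitor $\Rightarrow$ patch boundary $\Rightarrow$ build competitor current $\Rightarrow$ contradict minimality'' breaks at the second arrow. ``Transplanting via $\be_k$'' alone does not resolve this: $\be_k$ only reparametrizes the domain.

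Two further, smaller issues. First, the mass identity you write, $\mass(\tilde T_k)-\mass(\bar T_k)=\bh_k^2\bar r_k^m(\textup{Dir}(\tilde N_k)-\textup{Dir}(\bar N_k))+o(\bh_k^2\bar r_k^m)$, has a spurious factor of $\bar r_k^m$: both $\tilde T_k$ and $\bar T_k$ are already rescaled currents and the paper's \eqref{e:fine!!} has only $\bh_k^2$. This is a symptom of mixing the rescaled and unrescaled pictures (you also write $\cB_{\bar r_k t}\subset\bar\cM_k$, which belongs to the unrescaled $\cM_{j(k)}$). Second, the identification of the target $\{0\}\times\R^{\bar n}\times\{0\}$ is stated as a soft consequence of $\bar\cM_k\to\pi_0$ and $\bar\Sigma_k\to\R^{m+\bar n}\times\{0\}$, but the actual argument in the paper requires a quantitative comparison: the part of $\bar N_k$ orthogonal to $T\bar\Sigma_k$ is $O(|\bar N_k|^2)$, so after dividing by $\bh_k$ it vanishes thanks to the $C^0$ bound in \eqref{e:Lip_riscalato}, not merely because of the convergence of the ambient manifolds. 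The isoperimetric-filling step for gluing the competitor current is also glossed over, but your ``thin annulus'' hint is at least in the right spirit.
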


We postpone the proof of Theorem~\ref{t:sospirato_blowup} to the next section
and show next Theorem~\ref{t:main}. 

\subsection{Proof of Theorem \ref{t:main}: capacitary argument}
Let $N^b_\infty$ be as in Theorem~\ref{t:sospirato_blowup} and
\[
\Upsilon := \left\{x\in \bar B_{1} : N^b_\infty(x) = Q\a{0}\right\}.
\]
Since $\etaa \circ N^b_\infty\equiv 0$ and
$\| N^b_\infty\|_{L^2(B_{\sfrac{3}{2}})}= 1$, from the regularity of Dir-minimizing
$Q$-valued functions (cf.~\cite[Proposition 3.22]{DS1}),
we know that
$\cH^{m-2+\alpha}_\infty (\Upsilon) = 0$.
We show in the next three steps that this contradicts Assumption~\ref{assurdo}.

\medskip

\textit{Step 1.} 
We cover $\Upsilon$ by balls $\{\bB_{\sigma_i} (x_i)\}$ in such a way that
$\sum_i \omega_{m-2+\alpha} (4\sigma_i)^{m-2+\alpha} \leq 
\frac{\eta}{2}$, where $\eta$ is the constant in \eqref{e:sing grande}.
By the compactness of $\Upsilon$, such a covering can be chosen finite.
We can therefore choose a $\bar{\sigma}>0$ so that
the $5\bar\sigma$-neighborhood of $\Upsilon$ is covered by $\{\bB_{\sigma_i} (x_i)\}$.
Denote by $\Lambda_{k}$ the set of multiplicity $Q$ points of $\bar T_k$ far away from the singular set
$\Upsilon$:
\[
\Lambda_{k} := \{ p \in D_Q(\bar T_k) \cap \bB_1 : \dist(p, \Upsilon)>4 \bar{\sigma} \}.
\]
Clearly, $\cH^{m-2+\alpha}_\infty (\Lambda_k)
\geq \frac{\eta}{2}$. Let $\mathbf{V}$ denote the neighborhood of $\Upsilon$ of size $2\bar{\sigma}$.
By the H\"older continuity of $\D$-minimizing
functions (cf.~\cite[Theorem~2.9]{DS1}) there is a positive constant $1>\vartheta>0$ such that
$|N^b_\infty (x)|^2\geq 2 \vartheta$ for every $x\not\in \mathbf{V}$. We next introduce a parameter
$\sigma > 0$ whose choice will be specified only at the very end: throughout the rest of the proof it
will only required to be sufficiently small. In particular, $\sigma < \bar{\sigma}$ will surely imply that
\[
\mint_{B_{2\sigma} (x)} |N^b_\infty|^2 \geq 2 \,\vartheta \qquad \mbox{$\forall\;x\in B_{\frac54}$
with  $\dist (x, \Upsilon) \geq 4 \bar{\sigma}$.}
\]
Therefore, from Theorem~\ref{t:sospirato_blowup} we infer that, for sufficiently large $k$'s,
\begin{equation}\label{e:dall'alto}
\mint_{\cB_{2\sigma} (x)} \cG (\bar{N}_k, Q \a{\etaa\circ \bar{N}_k})^2 \geq \vartheta \bh_k^2
\qquad \mbox{$\forall\;x\in \Gamma_k:= \p_{\bar\cM_k} (\Lambda_k)$.}
\end{equation}

\medskip

\textit{Step 2.} For every $p\in \Lambda_k$, consider $\bar{z}_k (p) = \p_{\pi_k} (p)$ (where $\pi_k$ is the reference plane for the center manifold
related to $T_{j(k)}$) and 
$\bar{x}_k (p) := (\bar{z}_k (p),  \bar{r}_k^{-1} \phii_{j(k)} (\bar{r}_k z_k (p)))$. Observe that $\bar{x}_k (p)\in \bar\cM_k$. We next claim the existence of 
a suitably chosen geometric constant $1>c_0>0$ (in particular, independent of $\sigma$) such that, when $k$ is large enough,
for each $p\in \Lambda_k$ there is a radius $\varrho_p \leq 2\sigma$ with the following properties:
\begin{gather}
\frac{c_0\, \vartheta}{\sigma^{\alpha}}  \bh_k^2\leq \frac1{\varrho_p^{m-2+\alpha}}
\int_{\cB_{\varrho_p} (\bar{x}_k (p))} |D\bar{N}_k|^2,
\label{e:dal_basso_finale}\\
\cB_{\varrho_p} (\bar{x}_k (p)) \subset \bB_{4\varrho_p} (p)\label{e:inside}\, .
\end{gather}
In order to show this claim, fix such a point $p$, consider the point $q_k :=\bar{r}_k  p$, $z_k := \bar{r}_k\bar{z}_k (p)$ and
$x_k = \bar{r}_k \bar{x}_k (p) =(z_k, \phii_{j(k)} (z_k))$. Observe that $q_k \in D_Q (T_{j(k)})$. 
By \cite[Proposition~3.1]{DS4}, $z_k$ cannot belong to some $L\in \sW^{(j(k))}_h$ (otherwise $\bB_{16r_L} (p_L)$ would contain 
a multiplicity $Q$ point of $T_{j(k)}$, contradicting statement (S1) in \cite[Proposition~3.1]{DS4}). We thus distinguish two possibilities:
\begin{itemize}
\item[(Exc)] either $z_k$ belongs to some $L_k \in \sW^{(j(k))}_e \cup \sW^{(j(k))}_n$;
\item[(Con)] or it belongs to the set $\Gamma_{j(k)}$. 
\end{itemize}

\medskip

{\bf Case (Exc).}
Observe that if $L_k \in \sW^{(j(k))}_n$, by Proposition~\ref{p:flattening} (iii), there
exists a cube $H_k \in \sW^{(j(k))}_e$ such that $L_k$ belongs to the domain of influence
of $H_k$ and $\textup{sep}(B_{\bar{r}_k}, H_k) \leq 3 \bar{r}_k/16$. Thus $H_k$ intersects $B_{19 \bar{r}_k/16} (0, \pi)$.

We wish now to apply \cite[Proposition~3.5]{DS4} { with $s$ in there equal to $\bar{r}_k$ and $T$ in there equal to $T_{j(k)}$}: the aim is to infer
\begin{equation}\label{e:vanishing}
\bar{\ell}_k := 
\bar{r}_k^{-1} \sup \big\{\ell (L): L\in \sW^{(j(k))}_e \;\mbox{and}\; L\cap B_{19\bar{r}_k/16} (0, \pi)\neq \emptyset\big\} = o(1).
\end{equation}
{ Observe first that, taking into account the inequality $1\leq \bar{r}_k t_{j(k)}/r_k \leq 2$, a simple scaling argument gives \begin{align*}
&\mathcal{H}^{m-2+\alpha} (D_Q (T_{j(k)}), \bB_{\bar{r}_k}) \geq \left(\textstyle{\frac{r_k}{t_{j(k)}}}\right)^{m-2+\alpha}
\mathcal{H}^{m-2+\alpha} (D_Q (T_{0, r_k})\cap \bB_{t_{j(k)} \bar{r}_k/r_k})\\
\geq &  \left(\textstyle{\frac{r_k}{t_{j(k)}}}\right)^{m-2+\alpha} 
\mathcal{H}^{m-2+\alpha} (D_Q (T_{0, r_k})\cap \bB_1)\;\stackrel{\eqref{e:seq2}}{\geq}\; \eta\left(\textstyle{\frac{\bar{r}_k}{2}}\right)^{m-2+\alpha}\, , 
\end{align*}
which verifies \cite[(3.4)]{DS4}. We next need to verify \cite[(3.3)]{DS4} and consider therefore $L\in \sW^{(j)}$ which intersects $B_{3\bar{r}_k} (0, \pi)$. Since $\bar{r}_k >s_{j(k)}/t_{j(k)}$, by (Go) we have $\ell (L) <3 c_s \bar{r}_k \leq \bar{r}_k$. Now, for any fixed $\hat{\alpha}>0$ we can apply \cite[Proposition~3.5]{DS4} provided $\min \{\bar{r}_k, \bmo^{j(k)}\}$ is small enough, which is the case for $k$ large enough by Lemma \ref{l:vanishing}(i). Thus \cite[Proposition~3.5]{DS4} implies $\limsup_k \bar{\ell}_k \leq \hat{\alpha}$ and the arbitrariness of the latter parameter implies \eqref{e:vanishing}.}

For $k$ large enough, we can then apply \cite[Proposition~3.6]{DS4}
with $\eta_2= \frac{\vartheta}{4}$ (in particular this condition on how large $k$ must be is {\em independent} of the point $p$) . The Proposition
will be applied to $L_k$, if $L_k \in \sW^{(j(k))}_e$, or to $H_k$ above, if $L_k \in \sW^{(j(k))}_n$.
{ We thus
set
\[
 J_k=\begin{cases} H_k&\hbox{if $L_k\in\sW_n^{(j(k))}$,}\cr
        L_k&\hbox{if $L_k\in\sW_e^{(j(k))}$}
\end{cases}
\]
and conclude the existence of a constant $\bar s <1$ such that
\begin{equation*}\label{e:sotto_dal_cm}
\mint_{\cB_{\bar{s} \ell (J_k)} (x_k)} \cG (N_{j(k)}, Q \a{\etaa\circ N_{j(k)}})^2 
\leq \frac{\vartheta}{4 \omega_m \ell (J_k)^{m-2}} \int_{\cB_{\ell (J_k)} (x_k)} |DN_{j(k)}|^2\, .
\end{equation*}
By \eqref{e:vanishing} have, provided $k$ is large enough,
$t (p) := \frac{\ell(L)}{\bar r_k} \leq \bar\ell_k \leq \sigma$.
Therefore, rescaling to $\bar\cM_k$,
there exists $t (p) \leq \bar \ell_k$ such that
\begin{equation}\label{e:sotto_dal_cm_2}
\mint_{\cB_{\bar{s} t (p)} (\bar{x}_k (p))} \cG (\bar{N}_k, Q \a{\etaa\circ \bar{N}_k})^2 \leq 
\frac{\vartheta}{4 \omega_m t (p)^{m-2}} \int_{\cB_{t (p)} (\bar{x}_k (p))} |D\bar{N}_k|^2\, .
\end{equation}}
Moreover, from Proposition~\ref{p:flattening} (v) and Lemma~\ref{l:vanishing},
for $k$ large enough, we get
\begin{equation}\label{e:vicinanza_2}
|p- \bar{x}_k (p)|\leq C (\bmo^{j(k)})^{\frac{1}{2m}} \bar{r}_k^{\beta_2} t (p) < \bar s\, t (p).
\end{equation}

\medskip

{\bf Case (Con).} In case $q_k$ belongs to the contact set $\Phii_{j(k)} (\mathbf{\Gamma}_{j(k)})$, then $p=x_k (p)$ and $N_{j(k)} (x_k (p))=
Q\a{0}$. Therefore 
\[
\lim_{t\downarrow 0} 
\mint_{\cB_t (\bar{x}_k (p))} \cG (\bar{N}_k, Q \a{\etaa\circ \bar{N}_k})^2 = 0
\]
and we choose $t (p) < \sigma$ such that 
\begin{equation}\label{e:ancora_un_caso}
 \mint_{\cB_{\bar{s} t (p)} (\bar{x}_k (p))} \cG (\bar{N}_k, Q \a{\etaa\circ \bar{N}_k})^2
\leq \frac{\vartheta}{4} \bh_k^2\, . 
\end{equation}
Observe also that \eqref{e:vicinanza_2} holds trivially.

\medskip

Having chosen $t(p)$ in both cases, we next show
the existence of $\varrho_p \in ]\bar s\, t (p), 2\sigma[$
such that \eqref{e:dal_basso_finale} holds. Observe that \eqref{e:inside} will be an obvious consequence of \eqref{e:vicinanza_2}.
Notice that if
\begin{equation}\label{e:alto_rho_x}
\frac{1}{\omega_m t (p)^{m-2}} \int_{\cB_{t (p)} (\bar{x}_k (p))} |D \bar N_k|^2 \geq \bh_k^2\,,
\end{equation}
then \eqref{e:dal_basso_finale} follows with $\varrho_p = t (p)$.
If \eqref{e:alto_rho_x} does not hold, then
\begin{equation}\label{e:L2_basso}
 \mint_{\cB_{\bar{s} t (p)} (\bar{x}_k (p))} \cG (\bar{N}_k, Q \a{\etaa\circ \bar{N}_k})^2
\leq \frac{\vartheta}{4} \bh_k^2\, .
\end{equation}
Indeed we can use
\eqref{e:sotto_dal_cm_2} in the case (Exc) (in the case (Con) we have already shown it: see \eqref{e:ancora_un_caso}).

We now argue by contradiction to infer the existence of $\varrho_p \in ]\bar s t (p), 2\sigma]$
such that \eqref{e:dal_basso_finale} holds. Indeed, if this were not the case, 
{ we set for simplicity $f:= \cG (\bar{N}_k, Q\a{\etaa\circ \bar{N}_k})$,
\[
\bar{f}_r := \mint_{\cB_r (\bar{x}_k(p))} f
\]
and, letting $j$ be the smallest integer such that
$2^{-j} \sigma \leq \bar{s} t (p)$, we can estimate as follows:
\begin{align}
&\left(\mint_{\cB_{2\sigma} (\bar{x}_k (p))} f^2\right)^{\sfrac{1}{2}}
\leq\; \left(\mint_{\cB_{2\sigma} (\bar{x}_k (p))} (f-\bar{f}_{2\sigma})^2\right)^{\sfrac{1}{2}} + \sum_{i=0}^{j-1} |\bar{f}_{2^{1-i} \sigma} - \bar{f}_{2^{-i} \sigma}| + |\bar{f}_{2^{1-j} \sigma} - \bar{f}_{\bar{s} t(p)}|\nonumber\\
&\qquad\qquad\qquad\qquad\qquad +\left( \mint_{\cB_{\bar{s} t(p)} (\bar{x}_k (p))} |f-\bar{f}_{\bar{s} t(p)}|^2\right)^{\sfrac{1}{2}} + \left(\mint_{\cB_{\bar{s} t(p)} (\bar{x}_k (p))} f^2\right)^{\sfrac{1}{2}} \nonumber\\
\;\stackrel{\mathclap{\eqref{e:L2_basso}}}{\leq}
\; &C \sum_{i=0}^{j-1} \left(\frac{1}{(2^{1-i}\sigma)^{m-2}}
\int_{\cB_{2^{1-i}\sigma} (\bar{x}_k (p))} |D\bar{N}_k|^2\right)^{\sfrac{1}{2}} 
+\sqrt{\frac{\vartheta}{2}} \bh_k\, .\label{e:ancora_un_fix}
\end{align}
In the previous lines we have used repeatedly $|Df|\leq |D\bar{N}_k|$, the classical Poincar\'e inequality and the following simple 
Morrey-type estimate (which is also a consequence of the Poincar\'e inequality)
\begin{equation*}
(\bar{f}_{2t} -\bar{f}_t)^2 \leq \frac{C_0}{t^{m-2}} \int_{\cB_{2t} (\bar{x}_k (p))}
|Df|^2\, .
\end{equation*}
Note that such constant $C_0$ (and the constant for the Poincar\'e inequality) depends only upon the 
regularity of the underlying manifold $\bar\cM_k$, 
and, hence, can be assumed independent of $k$.
Summarizing, if \eqref{e:dal_basso_finale} were to fail for every radius in the interval $]\bar s t (p), 2\sigma]$, from \eqref{e:ancora_un_fix} we would conclude
\[
\mint_{\cB_{2\sigma} (\bar{x}_k (p))} f^2 \leq \bh_k^2 \vartheta \left(\frac{1}{\sqrt{2}} + C c_0\frac{1}{\sigma^{\sfrac{\alpha}{2}}}
\sum_{i=0}^{j-1} (2^{1-j}\sigma)^{\sfrac{\alpha}{2}}\right)^2 \leq \bh_k^2 \vartheta \left(\frac{1}{\sqrt{2}} + c_0 C(\alpha)\right)^2
\]}
Since $C(\alpha)$ depends on $\alpha$, $m$ and $Q$, but does not depend on $k$,
for $c_0$ chosen sufficiently small the latter inequality would contradict \eqref{e:dall'alto}.
Note that \eqref{e:inside} follows by a simple
triangular inequality.

\medskip

\textit{Step 3.}
Finally, we show that \eqref{e:dal_basso_finale} and \eqref{e:inside} lead to a
contradiction.
Consider a covering of $\Lambda_k$ with balls $\bB^i := \bB_{20 \varrho_{p_i}} (p_i)$
with the property that the corresponding balls
$\bB_{4\varrho_{p_i}} (p_i)$ are disjoint. We then can estimate
\begin{align*}
\frac{\eta}{2} & \leq C_0
\sum_i \varrho_{p_i}^{m-2+\alpha}
\stackrel{\eqref{e:dal_basso_finale}}{\leq} \frac{C_0}{c_0}\frac{\sigma^\alpha}{\vartheta\bh_k^2} \sum_i \int_{\cB_{\varrho_{p_i}} (\bar{x}_k (p_i))} |D\bar{N}_k|^2\\
&\leq \frac{C_0}{c_0}\frac{\sigma^\alpha}{\vartheta\bh_k^2} \int_{\cB_{\frac32}} |D\bar{N}_k|^2
\stackrel{\eqref{e:rev_Sob2}}{\leq}C\frac{\sigma^\alpha}{\vartheta},
\end{align*}
where $C_0>0$ is a dimensional constant.
In the last line we have used that, thanks to \eqref{e:inside}, the balls $\cB_{\varrho_{p_i}} (\p_{\bar\cM_k}(p_i))$ are pairwise disjoint and that, provided $\sigma$ is smaller
than $\frac{1}{32}$ and $k$ large enough, they are all contained in $\cB_\frac32$.
Since $\vartheta$ and $c_0$ are independent of $\sigma$, the above inequality reaches the
desired contradiction as soon as $\sigma$ is fixed sufficiently small. This will only require a sufficiently small $\bar{\ell}_k$,
which by \eqref{e:vanishing} is ensured for $k$ sufficiently large.

\section{Harmonicity of the limit}\label{s:fine!!}

In this section we prove Theorem~\ref{t:sospirato_blowup} and conclude our argument. We continue to follow the notation of
the previous section, in particular recall the maps defined in (BU1) and (BU2) of Section \ref{ss:blowup}

\subsection{First estimates} Without loss of generality we might translate the manifolds $\bar \cM_k$ so that the
rescaled points $\bar{p}_k =\bar{r}_k^{-1}\Phii_{j(k)} (0)$ coincide all with the origin.
Let $\bar F_k: \cB_\frac32 \subset\bar\cM_k \to \Iq(\R^{m+n})$ be the multiple valued map
given by $\bar{F}_k (x) := \sum_i \a{x+ (\bar{N}_k)_i (x)}$ and, to
simplify the notation, set $\p_k := \p_{\bar{\cM}_k}$.
We start by showing the existence of a 
suitable exponent $\gamma>0$ such that
\begin{gather}
\Lip (\bar{N}_k|_{\cB_{\sfrac{3}{2}}}) \leq C \bh_k^\gamma\quad\text{and}\quad \|\bar{N}_k\|_{C^0 (\cB_{\sfrac32})} \leq C (\bmo^{j(k)} \bar{r}_k)^\gamma,\label{e:Lip_riscalato}\\
\mass((\bT_{\bar{F}_k} - \bar{T}_k) \res (\p_k^{-1} (\cB_\frac32)) \leq C \bh_k^{2+2\gamma},\label{e:errori_massa_1000}\\
\int_{\cB_\frac32} |\etaa\circ \bar{N}_k| \leq C \bh_k^2\label{e:controllo_media}\, .
\end{gather}
Indeed, set $p_{j(k)} = \Phii_{j(k)} (0)$. Using the domain decomposition of Section \ref{s:regions} (note
that $\frac32 \bar r_k \in ] \frac{s_{j(k)}}{t_{j(k)}}, 3 [$)
and arguing in an analogous way we infer that
\begin{gather*}
\|N_{j(k)}\|_{C^0 (\cB_{\frac32\bar{r}_k} (p_{j(k)}))} \leq C (\bmo^{j(k)})^{\frac{1}{2m}} \bar{r}_k^{1+\beta_2}
\quad \text{and} \quad
\Lip (N_{j(k)}\vert_{\cB_{\frac32\bar{r}_k}(p_{j(k)})}) \leq C (\bmo^{j(k)})^{\gamma_2} \max_i \ell_i^{\gamma_2}
\\
\mass\big((\bT_{F_{j(k)}} - T_{j(k)}) \res \p_k^{-1} (\cB_{\frac32\bar{r}_k} (p_{j(k)}))\big) \leq \sum_i (\bmo^{j(k)})^{1+\gamma_2}
\ell_i^{m+2+\gamma_2}\label{e:errori_massa_10},\\
\int_{\cB_{\frac32\bar{r}_k} (p_{j(k)})} |\etaa\circ N_{j(k)}| \leq C \bmo^{j(k)} \bar{r}_k^{}
\sum_i \ell_i^{2+m+\sfrac{\gamma_2}{2}} + \frac{C}{\bar{r}_k^{}} \int_{\cB_{\frac32\bar{r}_k} (p_{j(k)})} |N_{j(k)}|^2\,,
\end{gather*}
where this time, for the latter inequality we have used \cite[Theorem~2.4 (2.4)]{DS4} with $a= \bar{r}_k^{}$.
On the other hand, { again by the arguments of Section \ref{s:regions} (see for instance  
\eqref{e:N_sotto}, \eqref{e:Dirichlet_sotto} and \eqref{e:nuova_referenza})} and Corollary~\ref{c:rev_Sob},
we see that
\begin{equation}\label{e:Sobolev_rovesciato}
\sum_i \bmo^{j(k)} \ell_i^{m+2+\frac{\gamma_2}{4}} \leq C_0 \int_{\cB_{\frac32\bar{r}_k} (p_{j(k)})} (|DN_{j(k)}|^2 + |N_{j(k)}|^2) \leq C \bar{r}_k^{-2} \int_{\cB_{\bar s_k} (p_{j(k)})} |N_{j(k)}|^2,
\end{equation}
from which \eqref{e:Lip_riscalato}-\eqref{e:controllo_media} follow by a simple
rescaling (the constant $C$ on the right hand side of \eqref{e:Sobolev_rovesciato} depends on $T$ but not on $k$).

It is then clear that the strong $L^2$ convergence of $N^b_k$ is a 
consequence of these bounds and of the Sobolev embedding
(cf.~\cite[Proposition~2.11]{DS1}); whereas, by \eqref{e:controllo_media},
\begin{equation*}
\int_{\cB_\frac32} |\etaa\circ N^b_\infty| =
\lim_{k\to +\infty}\int_{\cB_\frac32} |\etaa\circ N^b_k|
\leq C \lim_{k\to +\infty}\bh_k =0\, .
\end{equation*}
Finally, note that $N^b_\infty$ must take its values
in $\{0\}\times \R^{\bar{n}} \times \{0\}$.
Indeed, considering the tangential part of $\bar N_k$ given by
$\bar{N}^T_k(x) := \sum_i \a{\p_{T_x \bar \Sigma_k} (\bar{N}_k(x))_i}$,
it is simple to verify that $\cG (\bar{N}_k, \bar{N}^T_k)\leq C_0 |\bar{N}_k|^2$,
which leads to 
\[
\int_{\cB_{\sfrac32}} \cG (N^b_k, \bh_k^{-1} \bar{N}^T_k \circ \be_k)^2 \leq C_0 \bh_k^{-2} \int_{\cB_{\sfrac32}} |\bar N_k|^4 \stackrel{\eqref{e:Lip_riscalato}}{\leq} C (\bmo^{j(k)} \bar{r}_k)^{2\gamma} \to 0 \quad \text{as } k\to +\infty,
\]
and, by the convergence of $\bar\Sigma_k$ to $\R^{m+\bar n}\times\{0\}$,
gives the claim.

\subsection{A suitable trivialization of the normal bundle}
By Lemma~\ref{l:vanishing}, we can consider for every $\bar\cM_k$ an
orthonormal frame of $(T\bar\cM_k)^\perp$, $\nu^k_1, \ldots, \nu^k_{\bar{n}},
\varpi^k_1, \ldots \varpi^k_l$ with the property that
$\nu^k_j (x) \in T_x \bar\Sigma_k$,
$\varpi^k_j (x) \perp T_x \bar\Sigma_k$ and (cf.~\cite[Lemma~A.1]{DS2})
\[
\nu^k_j \to e_{m+j} \quad \text{and}\quad
\varpi^k_j \to e_{m+\bar n+ j} \quad \mbox{in $C^{2,\kappa/2}(\bar \cM_k)$ as $k\uparrow\infty$}
\]
(for every $j$: here $e_1, \ldots , e_{m+\bar{n}+l}$ is the standard basis of $\mathbb R^{m+\bar{n}+l} = \mathbb R^{m+n}$).
We next claim the existence of maps
$\psi_k : \bar\cM_k \times \R^{\bar n} \to \R^l$ converging to $0$ in
$C^{2, \kappa/2}$ (uniformly bounded in $C^{2, \kappa}$) and of $\delta>0$ (independent of $k$) such that, for every $v\in T_p\bar\cM_k$ with $|v|\leq \delta$,
\[
p + v\in \bar\Sigma_k
\Longleftrightarrow
v^\perp= \psi_k(p,v^T),
\]
with $v^T = (\langle v, \nu_1^k\rangle, \ldots, \langle v, \nu_{\bar n}^k\rangle)\in\R^{\bar n}$
and $v^\perp = (\langle v, \varpi_1^k\rangle, \ldots, \langle v, \varpi_{l}^k\rangle) \in \R^l$.
To see this, consider the map
\[
\Phi_k : \bar\cM_k \times \R^{\bar n} \times \R^l \ni (p, z, w) \mapsto 
p + z^j \nu^k_j + w^j \varpi^k_j \in \R^{m+n},
\]
where we use the Einstein convention of summation over repeated indices.
It is simple to show that the frame can be chosen so that $D\Phi_k (0,0) = \Id$ and,
hence, $\Phi_k^{-1}(\bar\Sigma_k)$ can be written locally as a graph of a function
$\psi_k$ satisfying the claim above.

Note that, by construction we also have that $\psi_k(p,0) = |D_w\psi_k(p,0)|=0$ 
for every $p \in \bar\cM_k$, which in turn implies
\begin{align}\label{e:taylorino_1}
|D_x \psi_k (x,w)|\leq C |w|^{1+\kappa}, \quad
|D_w \psi_k (x, w)|\leq C |w| \quad \text{and} \quad
|\psi_k (x,w)|\leq C |w|^2.
\end{align}
Given now any $Q$-valued map 
$u = \sum_i\a{u_i}: \bar{\cM}_k \to \Iq (\{0\}\times \R^{\bar n} \times \{0\})$
with $\|u\|_{L^\infty} \leq \delta$,
we can consider the map $\mathbf{u}_k := \psi_k (x, u)$ defined by
\[
x\mapsto \sum_i \a{ (u_i)^j \nu_j^k (x)
+ \psi_k^j(x, u_i(x)) \varpi^k_j (x)},
\]
where we set
$(u_i)^j:= \langle u_i (x), e_{m+j}\rangle$, 
$\psi_k^j(x,u_i(x)):= \langle \psi_k (x, u_i (x)), e_{m+\bar n +j}\rangle$ (again we 
use Einstein's summation convention).
Then, the differential map
$D\mathbf{u}_k := \sum_i \a{D(\mathbf{u}_k)_i}$ is given by
\begin{align*}
D(\mathbf{u}_k)_i &= D(u_i)^j  \nu_j^k + \left[D_x \psi_k^j (x, u_i) +
D_w \psi_k^j (x, u_i) Du_i\right] \varpi^k_j\\
&\qquad + (u_i)^j D \nu_j^k + \psi_k^j (x, u_i) D \varpi^k_j\, . 
\end{align*}
Taking into account that $\|D\nu_i^k\|_{C^0}+\|D\varpi^k_j\|_{C^0} \to 0$
as $k\to +\infty$, by \eqref{e:taylorino_1} we deduce that
\begin{align}
\left|\int \big(|D \mathbf{u}_k|^2 - |D u|^2\big)\right| 
\leq C \int \left(|Du|^2|u| + |Du||u|^{1+\kappa} + |u|^{2+2\kappa}\right)
+ o(1) \int \left(|u|^2+|Du|^2\right)\, .\label{e:stimozza}
\end{align}

Now we clearly have $\bar{N}_k (x)= \psi_k (x, \bar{u}_k)$
for some Lipschitz $\bar{u}_k: \bar{\cM}_k\to \Iq (\R^{\bar n})$
with $\|\bar{u}_k\|_{L^\infty} = o(1)$ by \eqref{e:Lip_riscalato}.
Setting $u^b_k:= \bar{u}_k \circ \be_k$, we conclude from \eqref{e:rev_Sob},
\eqref{e:Lip_riscalato} and \eqref{e:stimozza} that
\begin{align}\label{e:stimozza_10}
\lim_{k\to +\infty}\int_{B_{\frac32}} \left( |DN^b_k|^2 - \bh_k^{-2} |Du^b_k|^2\right) =0,
\end{align}
and $N^b_\infty$ is the limit of  $\bh_k^{-1} u^b_k$.

\subsection{Competitor function}
We now show the $\D$-minimizing property of $N^b_\infty$.
Clearly, there is nothing to prove if its Dirichlet energy vanishes.
We can therefore assume that there exists $c_0>0$ such that
\begin{equation}\label{e:reverse_control}
c_0 \bh_k^2 \leq \int_{\cB_\frac32} |D\bar{N}_k|^2\, .
\end{equation}
Assume there is a radius $t \in \left]\frac54,\frac32\right[$ and a 
function $f: B_\frac32 \to \Iq(\R^{\bar n})$ such that
\[
f\vert_{B_\frac32\setminus B_t} = N^b_\infty\vert_{B_\frac32 \setminus B_t} \quad\text{and}\quad
\D(f, B_t) \leq \D(N^b_\infty, B_t) -2\,\delta,
\]
for some $\delta>0$.
We can apply \cite[Proposition~3.5]{DS3} to the functions $\bh_k^{-1}u_k^b$
and find $r \in ]t,2[$ and competitors $v^b_k$ such that,
for $k$ large enough,
\begin{gather*}
v^b_k\vert_{\de B_r} = u^b_k\vert_{\de B_r}, \quad
\Lip (v^b_k) \leq C \bh_k^\gamma, \quad |v^b_k| \leq C (\bmo^k \bar{r}_k)^\gamma,\\
\int_{B_\frac32} |\etaa\circ v^b_k| \leq C \bh_k^2\quad\text{and}\quad
\int_{B_\frac32} |Dv^b_k|^2 \leq \int |Du^b_k|^2 - \delta \, \bh_k^2\, ,
\end{gather*}
where $C>0$ is a constant independent of $k$ and $\gamma$ the exponent of  \eqref{e:Lip_riscalato}-\eqref{e:controllo_media}.
Clearly, by Lemma~\ref{l:vanishing} and \eqref{e:taylorino_1}, 
the maps $\tilde{N}_k = \psi_k (x, v^b_k \circ \be_k^{-1})$ satisfy 
\begin{gather*}
\tilde{N}_k \equiv \bar{N}_k \quad \mbox{in $\cB_\frac32\setminus\cB_t$},\quad
\Lip (\tilde{N}_k) \leq C \bh_k^\gamma, \quad |\tilde{N}_k| \leq C (\bmo^k \bar{r}_k)^\gamma,\\
\int_{\cB_\frac32} |\etaa\circ \tilde{N}_k| \leq C \bh_k^2
\quad\text{and}\quad
\int_{\cB_\frac32} |D\tilde{N}_k|^2 \leq \int_{\cB_\frac32} |D\bar{N}_k|^2 - \delta \bh_k^2.
\end{gather*}

\subsection{Competitor current} Consider finally the map
$\tilde{F}_k (x) = \sum_i \llbracket x+\tilde{N}_i (x)\rrbracket$. The current $\bT_{\tilde{F}_k}$ coincides with 
$\bT_{\bar{F}_k}$ on $\p_k^{-1}( \cB_\frac32\setminus \cB_t)$.
Define the function $\varphi_k(p) = \dist_{\bar{\cM}_k} (0, \p_k (p))$
and consider for each $s\in \left]t,\frac32\right[$ the slices 
$\langle \bT_{\tilde{F}_k} - \bar{T}_k, \varphi_k, s\rangle$.
By \eqref{e:errori_massa_1000} we have
\[
\int_t^\frac32 \mass (\langle \bT_{\tilde{F}_k} - \bar{T}_k, \varphi_k, s\rangle) \leq C \bh_k^{2+\gamma}\, .
\]
Thus we can find for each $k$ a radius $\sigma_k\in \left]t, \frac32\right[$ on which 
$\mass (\langle \bT_{\tilde{F}_k} - \bar{T}_k, \varphi_k, \sigma_k\rangle) \leq C \bh_k^{2+\gamma}$. By the isoperimetric
inequality (see \cite[Remark 4.3]{DS3}) there is a current $S_k$ such that
\[
\partial S_k = 
\langle \bT_{\tilde{F}_k} - \bar{T}_k, \varphi_k, \sigma_k\rangle,
\quad \mass (S_k) \leq C \bh_k^{(2+\gamma)m/(m-1)}
\quad\text{and}\quad\supp (S_k) \subset \bar{\Sigma}_k.
\]
Our competitor current is, then, given by
\[
Z_k := \bar{T}_k\res (\p_k^{-1} (\bar\cM_k\setminus \cB_{\sigma_k})) + S_k + \bT_{\tilde{F}_k}\res
(\p_k^{-1} (\cB_{\sigma_k})).
\]
Note that $Z_k$ is supported in $\bar{\Sigma}_k$ and has the same boundary as 
$\bar{T}_k$. On the other hand, by \eqref{e:errori_massa_1000} and the bound on $\mass (S_k)$, we have
\begin{equation}\label{e:perdita_massa}
\mass (\tilde{T}_k) - \mass (\bar{T}_k) \leq \mass (\bT_{\bar F_k}) - \mass (\bT_{\tilde{F}_k}) + C \bh_k^{2+2\gamma}\, .
\end{equation}
Denote by $A_k$ and by $H_k$ respectively the second fundamental forms and mean curvatures of the
manifolds $\bar\cM_k$. Using the Taylor expansion
of \cite[Theorem~3.2]{DS2}, we achieve
\begin{align}
 \mass (\tilde{T}_k) - \mass (\bar{T}_k) &\leq \frac{1}{2} \int_{\cB_\rho} \left(|D\tilde{N}_k|^2 - |D\bar{N}_k|^2\right) 
+ C \|H_k\|_{C^0} \int \left(|\etaa\circ \bar{N}_k| + |\etaa\circ \tilde{N}_k|\right)\nonumber\\
&\qquad + \|A_k\|_{C^0}^2 \int \left( |\bar{N}_k|^2 + |\tilde{N}_k|^2\right) + o (\bh_k^2)
\leq -\frac{\delta}{2} \bh_k^2 + o (\bh_k^2)\, ,\label{e:fine!!}
\end{align}
where in the last inequality we have taken into account Lemma \ref{l:vanishing}.
Clearly, \eqref{e:fine!!} and \eqref{e:perdita_massa}
contradict the minimizing property of $\bar{T}_k$ for $k$ large enough and concludes
the proof.

\appendix

\section{Some technical Lemmas}\label{a:tecnici}
The following is a special case of Allard's $\eps$-regularity theory (see \cite[Chapter 5]{Sim}).

\begin{theorem}\label{t:strong_degiorgi}
Assume $T$ is area minimizing, $x\in\rD_Q (T)$ and $\|T\| ((\supp (T) \cap U)\setminus D_Q) = 0$ in some neighborhood $U$ of $x$. Then, $x\in \reg (T)$. In particular, $\rD_1 (T)\subset \reg (T)$.
\end{theorem}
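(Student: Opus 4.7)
The plan is to show that every tangent cone to $T$ at $x$ is a multiplicity $Q$ flat $m$-plane and then invoke an Allard-type $\eps$-regularity theorem for area minimizing currents (cf.~\cite[Chapter~5]{Sim}). I would first dispose of the second assertion as a consequence of the main one: if $x\in \rD_1 (T)$, then upper semicontinuity of the density and its integrality on $\supp (T)\setminus \supp (\partial T)$ force $\Theta (T, \cdot) \leq 1$ on some neighborhood $U$ of $x$; combined with the standard lower bound $\Theta (T, \cdot)\geq 1$ on $\supp (T)\setminus \supp(\partial T)$, one gets $\Theta (T, \cdot)\equiv 1$ on $\supp (T)\cap U$, so the hypothesis of the main statement is automatically verified with $Q=1$.

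For the main statement, fix any tangent cone $S$, arising as $T_{x,r_k}\to S$. The argument above applied to $T$ in $U$ gives $\Theta (T, y)\leq Q$ for $y$ near $x$, and by upper semicontinuity of the density under convergence of minimizing currents one obtains $\Theta (S, y)\leq Q$ at every $y\in \supp (S)$. For the reverse inequality, the hypothesis forces $\rD_Q (T)$ to be dense in $\supp (T)\cap U$ (it is $\|T\|$-full and $\|T\|\geq \cH^m\res \supp (T)$); together with the Hausdorff convergence of supports for area minimizing currents, for every $y\in \supp (S)$ one finds $y_k\in \rD_Q (T_{x, r_k})$ with $y_k\to y$, and a second application of upper semicontinuity yields $\Theta (S, y) \geq Q$. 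Hence $\Theta (S, y) = Q = \Theta (S, 0)$ at every $y\in \supp (S)$. The equality case in the monotonicity formula then implies that $S$ is a cone also at every such vertex $y$, and two cones with distinct vertices coincide only if each is invariant under translations along the line joining the vertices. Therefore $S$ is invariant under translation by every $y\in \supp (S)$; denoting by $V$ the linear span of $\supp (S)$, it follows that $\supp (S) = V$, and the $m$-rectifiability of $\|S\|$ together with $\Theta (S, 0) = Q > 0$ force $\dim V = m$. The constancy theorem finally gives $S = Q\a{V}$.

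With all tangent cones flat and of multiplicity $Q$, along the same subsequence (not relabeled) one has $\bE (T_{x, r_k}, \bB_1)\to 0$ and $r_k^{-m}\|T\|(\bB_{r_k}(x))\to Q\,\omega_m$. For $k$ large enough these quantities fall below the Allard thresholds, and Allard's theorem applied to $T$ in $\bB_{r_k}(x)$ produces a smaller ball centered at $x$ in which $\supp (T)$ is a $C^{1,\alpha}$ graph of multiplicity $Q$ over an $m$-plane; standard elliptic bootstrap for the minimal surface system in the $C^{3,\eps_0}$ ambient $\Sigma$ then upgrades the regularity to $C^{3,\eps_0}$, placing $x$ in $\reg (T)$. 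I expect the rigidity step for the tangent cone to be the main analytic obstacle: it relies on the equality case in the monotonicity formula and the dimension identification $\dim V = m$. The remaining ingredients---semicontinuity of densities, Hausdorff convergence of supports, and the final appeal to Allard---are by now standard.
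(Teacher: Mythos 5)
Your proof takes essentially the same route as the paper: show that all tangent cones at $x$ are multiplicity-$Q$ flat planes, invoke Allard's theorem to get a $C^{1,\alpha}$ graph, and bootstrap via elliptic regularity. The paper is deliberately terse on the first step (``by simple considerations on the density''), so you deserve credit for spelling out the rigidity argument via the spine of the cone, which is the standard way to make it precise. The final bootstrap matches the paper's (which works through the elliptic functional $\int \Phi(x,\bar u,D\bar u)$ in the graphical chart for $\Sigma$), and your derivation of the second assertion from the first mirrors the paper's remark.

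Two places deserve cleanup. First, you assert that the density is integer-valued ``on $\supp(T)\setminus\supp(\partial T)$'' and use this to deduce $\Theta(T,\cdot)\leq Q$ pointwise near $x$. Integrality of the density holds only $\|T\|$-a.e., not at every point of the support; a priori there could be singular points with non-integer density. What one actually obtains from upper semicontinuity at $x$ is $\Theta(T,\cdot)<Q+1$ in a neighborhood, whence $\Theta=Q$ holds $\|T\|$-a.e.\ (this already suffices to verify the hypothesis for $D_1$, which is all that's needed). Second, the upper bound $\Theta(S,y)\leq Q$ for $y\in\supp(S)$ is not really a consequence of upper semicontinuity of the density under convergence of minimizing currents (that goes the wrong direction: it gives a \emph{lower} bound on $\Theta(S,y)$ from $\limsup_k\Theta(T_{x,r_k},y_k)$). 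It follows cleanly instead from the monotonicity formula applied directly to the cone $S$: $\Theta(S,y)\leq\lim_{\rho\to\infty}\mass(S\cap\bB_\rho(y))/(\omega_m\rho^m)=\Theta(S,0)=Q$. With these two adjustments the argument for the rigidity step — density identically $Q$ on $\supp(S)$, hence $\supp(S)$ equals its spine, which is an $m$-dimensional linear subspace, and then the constancy theorem gives $S=Q\a{V}$ — is exactly right.
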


\begin{proof} By simple considerations on the density, 
the tangent cones at $x$ must necessarily be all $m$-dimensional planes with
multiplicity $Q$.
This allows to apply Allard's theorem and conclude that, in a neighborhood of $x$, 
$\supp (T)$ is necessarily the graph of a $C^{1, \kappa_0}$ function for some $\kappa_0>0$.
Let $u: \R^m \to \R^{\bar{n}+l}$ be the corresponding function and $\Psi: \R^{m+\bar{n}}\to \R^l$
a $C^{3, \eps_0}$ function whose graph describes $\Sigma$. Let $\bar{u}$ consist of the first $\bar{n}$
coordinates functions of $u$. We then have that $\bar{u}$ minimizes an elliptic functional of the
form $\int \Phi (x, \bar{u} (x), D\bar{u} (x))\, dx$ where $(x,v,p)\mapsto \Phi (x,v,p)$ and $(x,v,p)\to D_p\Phi
(x,v,p)$ are of class $C^{2, \eps_0}$. We can then apply the classical regularity theory to conclude
that $\bar{u}\in C^{3, \eps_0}$ (see, for instance, \cite[Theorem~9.2]{Morrey}),
thereby concluding
that $x$ belongs to $\reg (T)$ according to Definition \ref{d:reg_sing}.  Fix next {\em any}
$x\in D_1 (T)$. By the upper semicontinuity of the density $\Theta$ (cf.~\cite{Sim}),
$\Theta \leq \frac{3}{2}$
in a neighborhood $U$ of $x$, which implies $\|T\| ((\supp (T) \cap U)\setminus D_1) = 0$.
\end{proof}

Next, we prove the following technical lemma.

\begin{lemma}\label{l:separate}
Let $T$ be an integer rectifiable
current of dimension $m$ in $\R^{m+n}$ with locally finite mass and $U$ an open set such that $\cH^{m-1} (\partial U \cap \supp (T)) = 0$ and $(\partial T) \res U = 0$. Then $\partial (T\res U)=0$.
\end{lemma}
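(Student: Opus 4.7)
The plan is to combine a support argument with slicing by the distance to $U^c$, to show first that $\supp\partial(T\res U)\subset \supp T\cap \partial U$ and then that $\partial(T\res U)$ has locally finite mass; rectifiability together with the hypothesis $\cH^{m-1}(\partial U\cap \supp T)=0$ will then force $\partial(T\res U)$ to vanish.

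First I would verify the support statement. For any $(m-1)$-form $\omega$ compactly supported in $U$, the defining identity of the boundary together with $(\partial T)\res U=0$ gives $\partial(T\res U)(\omega)=(T\res U)(d\omega)=T(d\omega)=(\partial T)(\omega)=0$. Since moreover $\supp(T\res U)\subset \supp T\cap \overline U$, we conclude
\[
\supp \partial(T\res U) \subset \supp T\cap \partial U.
\]

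Next I would produce a local mass bound on $\partial(T\res U)$ via slicing. Let $\varphi(x):=\dist(x,\R^{m+n}\setminus U)$, which is $1$-Lipschitz with $\{\varphi>0\}=U$. For almost every $r>0$ the slice $\langle T,\varphi,r\rangle$ is a rectifiable $(m-1)$-current of finite mass and, since $\{\varphi>r\}\subset U$ and $(\partial T)\res U=0$, the slicing identity yields
\[
\partial(T\res\{\varphi>r\}) \;=\; (\partial T)\res\{\varphi>r\}-\langle T,\varphi,r\rangle \;=\; -\langle T,\varphi,r\rangle.
\]
For each compact $K$, the co-area inequality $\int_0^1 \mass_K(\langle T,\varphi,r\rangle)\,dr\leq \mass_K(T)$ together with Chebyshev's inequality furnishes a sequence $r_k\downarrow 0$ along which $\mass_K(\langle T,\varphi,r_k\rangle)$ remains uniformly bounded. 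Monotone convergence of $\|T\|\res\{\varphi>r\}$ to $\|T\|\res U$ gives $T\res\{\varphi>r\}\to T\res U$ in local mass norm, hence $\partial(T\res\{\varphi>r_k\})\to \partial(T\res U)$ weakly as currents. Lower semicontinuity of mass then yields $\mass_K(\partial(T\res U))\leq \liminf_k \mass_K(\langle T,\varphi,r_k\rangle)<\infty$.

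Finally, $T\res U$ is integer rectifiable (as the restriction of such) with locally finite mass, and $\partial(T\res U)$ has locally finite mass by the previous step, so the Federer--Fleming boundary rectifiability theorem implies that $\partial(T\res U)$ is itself integer rectifiable. Its mass measure is therefore of the form $\theta\,\cH^{m-1}\res M$ with $M\subset\supp T\cap \partial U$, and since $\cH^{m-1}(\supp T\cap \partial U)=0$ this measure vanishes, forcing $\partial(T\res U)=0$. The main obstacle is the middle step: the support inclusion alone does not suffice, and it is the slicing together with the co-area control that turns the hypothesis on $\partial U$ into the finite-mass property needed to invoke rectifiability.
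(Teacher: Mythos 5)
Your overall strategy — constrain $\supp\partial(T\res U)$ and exploit the smallness of $\partial U\cap\supp T$ — is the right one, and the support inclusion $\supp\partial(T\res U)\subset\supp T\cap\partial U$ is correctly established. The gap lies in the mass-bound step. You assert that the coarea inequality $\int_0^1\mass(\langle T,\varphi,r\rangle\res K)\,dr\leq\mass(T\res K)$, via Chebyshev, furnishes a sequence $r_k\downarrow 0$ along which $\mass(\langle T,\varphi,r_k\rangle\res K)$ stays uniformly bounded. This does not follow: integrability of $m(r):=\mass(\langle T,\varphi,r\rangle\res K)$ on $(0,1)$ is perfectly compatible with $m(r)\to+\infty$ as $r\to 0^+$ (take $m(r)=r^{-1/2}$ as a model). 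Chebyshev only controls the Lebesgue measure of the set $\{m>M\}$, and says nothing about its location — the good set $\{m\leq M\}$ could lie entirely at positive distance from $r=0$ for every $M$. Without such a bounded sequence you cannot invoke lower semicontinuity of mass to deduce $\mass(\partial(T\res U)\res K)<\infty$, and the Federer–Fleming boundary rectifiability theorem, which needs exactly this finiteness, is therefore out of reach.

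The paper circumvents this obstruction by working at the level of flat chains, never attempting to bound the mass of $\partial(T\res U)$. Slicing is used only to show that $S_r := T\res(V\cap U\cap\{\dist(\cdot,\partial U)>r\})$ is a \emph{normal} current for almost every fixed $r$ — a statement that requires no uniformity as $r\to 0$. Then $T\res(U\cap V)$ is the mass-limit of the $S_{r_k}$, hence a flat chain (Federer 4.1.17); its boundary is a flat chain (Federer 4.1.12); and Federer 4.1.20 gives that a flat chain of dimension $m-1$ whose support has zero $\cH^{m-1}$-measure must vanish. The crucial point is that this last theorem applies to flat chains directly, with no a priori finiteness of mass — precisely the hypothesis your argument cannot supply. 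If you want to keep your route through boundary rectifiability, you would first need an independent argument showing $\partial(T\res U)$ has locally finite mass; but the most natural such argument is again 4.1.20 (a flat chain of finite $\cH^{m-1}$-measure support is rectifiable), at which point you have already done the paper's proof.
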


\begin{proof} Consider $V\subset \subset \R^{m+n}$.
By the slicing Theorem \cite[4.2.1]{Fed} applied to ${\rm dist} (\cdot, \partial U)$ we conclude that 
$S_r:= T\res (V \cap U \cap \{{\rm dist}\, (x, \partial U) >r\})$ is
a normal current in ${\bf N}_m (V)$ for a.e.~$r$.
Since $\mass (T\res(V\cap U)-S_r)\to 0$ as $r\downarrow 0$, we conclude that $T\res (U\cap V)$
is in the $\mass$ closure of ${\bf N}_m (V)$. Thus, by \cite[4.1.17]{Fed}, $T\res U$ is a flat chain in $\R^{m+n}$. By \cite[4.1.12]{Fed},
$\partial (T\res U)$ is also a flat chain. It is easy to check that $\supp (\partial (T\res U)) \subset \partial U\cap \supp (T)$. Thus we can apply
\cite[Theorem 4.1.20]{Fed} to conclude that $\partial (T\res U)=0$.
\end{proof}

Recall the following theorem (for the proof see \cite[Theorem 35.3]{Sim}).

\begin{theorem}\label{t:strat}
If $T$ is an integer rectifiable area minimizing current in $\Sigma$, then
\[
\cH^{m-3+\alpha}_\infty \Big(
\supp (T)\setminus \Big(\supp (\partial T)\cup \bigcup_{Q\in\N} \rD_Q(T)\Big)\Big) = 0 \quad \forall \; \alpha>0.
\]
\end{theorem}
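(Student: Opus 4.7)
The plan is to deduce Theorem~\ref{t:strat} from Federer--Almgren dimension reduction. Set $X := \supp(T) \setminus \big(\supp(\partial T) \cup \bigcup_{Q \in \N} \rD_Q(T)\big)$; the task is to show $\cH^{m-3+\alpha}_\infty(X) = 0$ for every $\alpha > 0$. First I would invoke the monotonicity formula for area-minimizing currents (see \cite{Sim}) to record that, at every $x \in \supp(T) \setminus \supp(\partial T)$, the density $\Theta(T, x)$ exists, depends upper semicontinuously on $x$, and every sequence of rescalings $T_{x, r_k}$ with $r_k \downarrow 0$ admits a subsequence converging (as currents, with local Hausdorff convergence of supports) to an area-minimizing integral cone $C$ satisfying $\Theta(C, 0) = \Theta(T, x)$.

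Next, I introduce the standard Almgren stratification
\[
\mathcal{S}^k := \{x \in \supp(T) \setminus \supp(\partial T) : \text{no tangent cone to } T \text{ at } x \text{ is translation-invariant along a } (k+1)\text{-plane}\}
\]
for $k = 0, \dots, m$. The quantitative core of the argument is the Federer--Almgren dimension-reduction estimate $\cH^{k+\alpha}_\infty(\mathcal{S}^k) = 0$ for every $\alpha > 0$: one argues by contradiction, selects a Hausdorff density point of $\mathcal{S}^k$ with respect to $\cH^{k+\alpha}_\infty$, and blows up, then iterates the tangent-cone analysis on the translation-invariant part of successive tangent cones until one produces a tangent cone that is translation-invariant in one additional direction, contradicting membership in $\mathcal{S}^k$.

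To finish, I would establish the inclusion $X \subset \mathcal{S}^{m-3}$. Suppose $x \notin \mathcal{S}^{m-3}$: then some tangent cone $C$ to $T$ at $x$ is translation-invariant along an $(m-2)$-plane $V$, so $C = V \times C'$ where $C'$ is a two-dimensional boundaryless area-minimizing integer rectifiable cone in $V^\perp$. By Allard's regularity theorem applied at every nonvertex point of $\supp C'$ combined with the cone structure, the link of $C'$ in the unit sphere of $V^\perp$ is a smooth integer-multiplicity closed $1$-cycle, hence a finite sum of oriented great circles by area-minimality. Therefore $C$ is a sum $\sum_i Q_i \a{\pi_i}$ of $m$-planes with positive integer multiplicities, whence $\Theta(T, x) = \Theta(C, 0) = \sum_i Q_i \in \N$ and $x \in \rD_Q(T)$ for $Q := \sum_i Q_i$, contradicting $x \in X$. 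Combined with $\cH^{m-3+\alpha}_\infty(\mathcal{S}^{m-3}) = 0$, this gives the theorem. The principal obstacle in the plan is the Federer--Almgren stratification estimate itself: the inductive dimension reduction relies crucially on compactness of blow-ups of area-minimizing currents and on upper semicontinuity of density, and its technical heart is a delicate measure-theoretic density-point analysis at the level of the Hausdorff premeasure $\cH^{k+\alpha}_\infty$.
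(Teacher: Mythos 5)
The paper gives no proof of Theorem~\ref{t:strat} --- it simply refers to \cite[Theorem 35.3]{Sim}, i.e.\ the Federer--Almgren dimension--reduction/stratification argument, so what has to be checked is whether your sketch correctly reconstructs that classical argument and, in particular, the inclusion $X\subset\mathcal S^{m-3}$. Your high-level plan is the right one: the stratification estimate $\cH^{k+\alpha}_\infty(\mathcal S^k)=0$ combined with the observation that any tangent cone whose spine has dimension at least $m-2$ must have integer density at the vertex. The gap is in the justification of the latter. You write that ``by Allard's regularity theorem applied at every nonvertex point of $\supp C'$ \ldots\ the link \ldots\ is a smooth integer-multiplicity closed $1$-cycle,'' but Allard's theorem only applies where the density is close to $1$. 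At a nonvertex point $y$ of a two-dimensional boundaryless area-minimizing cone $C'$ the tangent cone splits off $\R y$ and has the form $\R y\times D$ with $D$ a one-dimensional boundaryless area-minimizing cone, hence a union of lines with multiplicities; if $D$ has total multiplicity $\geq 2$, then $\Theta(C',y)\geq 2$ and Allard gives nothing. Ruling out such nonvertex singularities --- equivalently, showing that the link has no junctions, so that $C'$ is a finite sum of oriented $2$-planes --- is itself a nontrivial classical fact about two-dimensional minimizing cones, and requires an additional idea (for instance a further round of dimension reduction to reduce to $1$-dimensional cones together with a comparison-surface argument at junctions of the geodesic network, or an appeal to the known structure theory for two-dimensional minimizers). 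Worth noting as a sanity check: the only application of Theorem~\ref{t:strat} in this paper (inside the proof of Proposition~\ref{p:seq}) needs only the exponent $m-2$, and that weaker inclusion $X\subset\mathcal S^{m-2}$ requires precisely the part of your argument that \emph{is} solid, namely that one-dimensional boundaryless area-minimizing cones are unions of lines with integer multiplicities.
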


We finally prove the following result (first proved by Allard in an unpublished note and hence
reported in \cite{Alm}).

\begin{proposition}\label{p:exp}
Set $\pi:= \R^m\times \{0\}\subset \R^{m+n}$ and
let $\cM$ be the graph of a $C^{3,\kappa}$ function $\phii :\pi \supset B_3 (0)\to \R^m$, with $\phii (0)=0$.
Then the exponential map $\exp: B_3 (0)\to \cM$ belongs to the class $C^{2, \kappa}$.  Moreover,
if $\|\phii\|_{C^{3,\kappa}}$ is sufficiently small, then the set $\p_\pi (\exp (B_r (0)))\subset \pi$ is (for
all $r<3$) a convex set
and the maximal curvature of its boundary is less than $\frac{2}{r}$.
\end{proposition}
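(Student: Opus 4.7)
The plan is to derive both statements from standard smooth-dependence theory for the geodesic ODE on $\cM$, combined with a perturbative comparison with the Euclidean (flat) case. First I would parametrize $\cM$ by the graph map $\Phii(x) := (x, \phii(x))$, so that in this chart the induced metric $g_{ij} = \delta_{ij} + \partial_i\phii \cdot \partial_j\phii$ is of class $C^{2,\kappa}$ and the Christoffel symbols $\Gamma^k_{ij}$ are of class $C^{1,\kappa}$. The geodesic $\gamma_v(t)$ from $0$ with initial velocity $v \in T_0\cM$ then solves the second-order system $\ddot\gamma^k + \Gamma^k_{ij}(\gamma)\dot\gamma^i\dot\gamma^j = 0$ with $\gamma(0)=0$ and $\dot\gamma(0)=v$, and $\exp(v) = \gamma_v(1)$.

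The regularity claim $\exp \in C^{2,\kappa}$ would follow by applying classical smooth-dependence results for ODEs, upgraded by the Jacobi-field structure: the derivative $J = \partial_v \gamma_v$ satisfies the Jacobi equation $\nabla_t^2 J + R(\dot\gamma, J)\dot\gamma = 0$, whose coefficients can be bootstrapped to $C^{1,\kappa}$-regularity in $v$ using the special algebraic form of the geodesic equation (this is the content of the classical lemma of Allard/Hartman on the regularity of the exponential map of a $C^{k,\alpha}$ metric). Combining this with the scaling identity $\exp(tv) = \gamma_v(t)$ and the fact that $\gamma_v$ is $C^{3,\kappa}$ in $t$ by direct bootstrap of the ODE, one obtains $\exp \in C^{2,\kappa}(B_3)$.

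For the second part, set $E := \p_\pi \circ \exp : B_3 \to \pi$. When $\phii \equiv 0$ one has $\Gamma \equiv 0$, the geodesics through $0$ are straight lines, and $E$ reduces to the identity on $\pi$. A Gronwall-type comparison of the geodesic ODE for $\Gamma = \Gamma[\phii]$ with the Euclidean case, using that the Christoffel symbols depend continuously on $\phii$ in $C^{1,\kappa}$ with $\Gamma[0]\equiv 0$, yields a perturbation estimate of the form $\|E - \textup{id}\|_{C^{2,\kappa}(B_3)} \leq C\|\phii\|_{C^{3,\kappa}}$. Given this, for every $r\in(0,3)$ the boundary $E(\partial B_r)$ is a $C^{2,\kappa}$-small perturbation of $\partial B_r$, whose principal curvatures are all identically $1/r$. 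When $\|\phii\|_{C^{3,\kappa}}$ is chosen sufficiently small (depending only on dimensional constants), the perturbation of the principal curvatures stays strictly below $1/r$ in absolute value, so all principal curvatures of $\partial E(B_r)$ lie in $(0, 2/r)$; this gives both the uniform interior-sphere-of-radius-$r/2$ condition and the convexity of $E(B_r)=\p_\pi(\exp(B_r))$ claimed in the proposition.

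The main obstacle is the $C^{2,\kappa}$ regularity of $\exp$: since the Christoffel symbols are only $C^{1,\kappa}$, a direct application of the basic ODE smooth-dependence theorem would yield only $\exp \in C^{1,\kappa}$. The extra derivative is recovered through the special structure of the Jacobi equation and the geodesic system, which is the subtle content of the Allard/Hartman regularity statement invoked above. Once this is granted, the remainder of the proof is a routine combination of Gronwall estimates (to obtain the perturbation bound on $E$) with the standard perturbation theory for principal curvatures of embedded hypersurfaces.
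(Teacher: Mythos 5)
Your outline follows the same route as the paper's proof (graphical chart, geodesic ODE, Jacobi equation, then $C^2$-closeness of $\p_\pi\circ\exp$ to the identity for the convexity statement), but the critical step is handled by citation rather than argument, and the mechanism you name for the derivative gain is not the right one. You correctly note that naive ODE smooth-dependence only gives $\exp\in C^{1,\kappa}$ and that the Jacobi equation is where the extra derivative must come from. However, you attribute the upgrade of the Jacobi-equation coefficients to $C^{1,\kappa}$ regularity to \emph{``the special algebraic form of the geodesic equation''} and defer to an ``Allard/Hartman lemma'' --- but the proposition you are proving \emph{is} essentially Allard's lemma (the paper notes it is from an unpublished note of Allard reported in Almgren's work), so this is circular, and the stated mechanism is not where the gain actually occurs.

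The actual source of the extra derivative is the \emph{extrinsic} structure of $\cM$ as a graph, exploited through the Gauss equation: the intrinsic formula for the Riemann tensor of a $C^{2,\kappa}$ metric produces only a $C^{0,\kappa}$ tensor (you differentiate the $C^{1,\kappa}$ Christoffel symbols), but the Gauss equation expresses $R$ in terms of the second fundamental form, which involves only second derivatives of the $C^{3,\kappa}$ function $\phii$ together with a $C^{2,\kappa}$ orthonormal normal frame, hence $R\in C^{1,\kappa}$. This is the one-derivative gain that makes the rest work. Two further points the paper is careful about and your sketch omits: (a) one needs a $C^{1,\kappa}$ orthonormal \emph{tangent} frame parallel along radial geodesics, obtained by solving the parallel-transport ODE (whose coefficients are $C^{1,\kappa}$), so that the Jacobi equation can be written as a scalar second-order system with $C^{1,\kappa}$ coefficients in $(t,v)$; and (b) at this low regularity the Jacobi equation itself must be \emph{justified}, which the paper does by smoothing $\phii$, deriving the equation for the regularized manifolds, and passing to the limit using uniform $C^{1,\kappa}$ bounds on the exponential maps. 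Without identifying the Gauss-equation mechanism and supplying (a) and (b), the step from ``Jacobi structure'' to $\exp\in C^{2,\kappa}$ remains a gap. Your treatment of the second (convexity/curvature) claim, via a perturbation bound $\|\p_\pi\circ\exp-\mathrm{id}\|_{C^2}\lesssim\|\phii\|_{C^{3,\kappa}}$ and comparison of principal curvatures with the round sphere, is correct and matches the paper's (very terse) argument.
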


\begin{proof} Consider any $C^{3,\kappa}$ chart $x: \cM\to \Omega$, for instance the one induced by the
graphical structure. It is then obvious that the components $g_{ij}$ of the Riemannian metric (induced by the Euclidean
ambient space on the submanifold $\cM$) are $C^{2,\kappa}$. We let $\nabla$ be the Levi-Civita connection on $\cM$ for which $g$ is parallel
and consider the corresponding Christoffel symbols $\Gamma_{jk}^i$ (in the fixed coordinate patch). Using the standard formula which expresses the Christoffel
symbols $\Gamma_{jk}^i$ in terms of the metric $g_{sr}$ (see for instance \cite[Proposition 2.54]{GHL}), it is easy to see that the former are $C^{1,\kappa}$. The careful reader will notice that these objects are usually defined in standard textbooks assuming that the metric
is $C^\infty$, but in order to have a unique Levi-Civita connection it is enough that the metric is $C^1$, see the proof of
\cite[Theorem 2.51]{GHL} (in fact the Levi-Civita connection on $\cM$ can also be recovered by differentiating in the Euclidean ambient space and projecting the result onto the tangent space to $\cM$). Similarly, for $C^2$ metrics we can use the intrinsic definition of the Riemann curvature tensor
as in \cite[Definition 3.3]{GHL}. From the standard formula in \cite[3.16]{GHL} we easily conclude that the components of
this tensor are $C^{0, \kappa}$. However, by \cite[Lemma A.1]{DS1} we can choose a $C^{2, \kappa}$ orthonormal frame $\nu_1, \ldots , \nu_n: \Omega \to \mathbb R^{m+n}$ for the normal bundle of $\cM$ and the curvature tensor can be computed via the Gauss' equations as in \cite[5.8b)]{GHL}: we thus conclude that the components of the Riemann tensor are 
in fact $C^{1,\kappa}$. Again, although the references above carry on all computations in the $C^\infty$ setting, it can be easily
checked that these work in a straightforward way under our regularity assumptions.

Let next $\Phi (t,v):= \exp (vt)$ (the fact that the exponential map is well defined will be justified in few lines). Fix a $C^{3,\kappa}$ coordinate patch on $\cM$ where $0$ is the
origin,  using the graphical structure of $\cM$ over $T_0 \cM$. Set $t\mapsto \gamma (t)= \Phi (t,v)$ and use the notation
$\gamma_j'$ for the components of $\gamma'$ in the fixed chart $x:\Omega\to \cM$ (so, $\gamma' (t) = \sum_j \gamma'_j (t) \frac{\partial}{\partial x_j}$). $\gamma$ satisfies the system of differential equations
\[
\gamma_j''(t) + \sum_{ik} \Gamma_{ik}^j (\gamma (t)) \gamma'_i (t) \gamma'_k (t) = 0,
\]
with the initial conditions $\gamma (0) =0$ and $\gamma' (0) = v$, cf. \cite[Definition 2.77]{GHL}. It follows thus that the maps $\Phi$ and $\partial_t \Phi$ are $C^{1,\kappa}$; incidentally, this shows that the exponential map is well-defined (in fact, standard textbooks on ODEs only provide $C^1$ regularity; however the usual proof of $C^1$ regularity via Gronwall Lemma on the linearized ODEs for the derivative $\partial_v \Phi$ can be easily modified to prove $\partial_w \Phi \in C^{0,\alpha}$; cf. \cite[Section 9]{Amann}). 

Fix now a tangent vector $e$ at $0$, a point $p = \exp (v) \in \cM$  and perform a parallel transport of 
$e$ along the (``radial'') geodesic segment $[0,1]\ni t\mapsto \exp (tv)$ to define $e(p)$. We claim that the corresponding vector
field is $C^{1,\kappa}$. Indeed, fix any orthonormal tangent frame $f_1, \ldots f_m$ which is $C^{2, \kappa}$. 
Let
\[
e (\exp (tv)) = \sum_i \alpha_{v,i} (t) f_i (\Phi (t,v)) = \sum_{i,k} \alpha_{v,i} (t) \sum_k \varphi_{ik} (\Phi (t,v)) \frac{\partial}{\partial x_k}\, ,
\]
where the functions $\varphi_{ik}$ are $C^{2,\kappa}$. Recall that the a vector field $X (t) = \sum_j X_j (t) \frac{\partial}{\partial x_j}$ along a $C^1$ curve $c$ with tangent $c' (t) = \sum_i c_i' (t) \frac{\partial}{\partial x_i}$ is parallel if and only if
\[
X_i ' (t) = - \sum_{j,k} \Gamma_{jk}^i (c(t)) c'_j (t) X_k (t)\, ,
\]
cf. \cite[Theorem 2.68 and equation (2.69)]{GHL}.
We therefore conclude that the coefficients $\alpha_{v,i} (t)$ must satisfy a system of ODEs of the form
\[
\alpha'_{v,i} (t) = - \sum_j \alpha_{v,j} (t) F_{ij} (\Phi (t,v), \partial_t \Phi (t,v)) 
\]
where $(t,v)\mapsto
F_{ij} (\Phi (t,v), \partial_t \Phi (t,v))$ are $C^{1,\kappa}$ maps. Thus the existence of $e$ and
the claimed regularity of $(t,v)\mapsto \alpha_{v,i} (t)$
follow from the standard theory of
ODEs.

Recall also that the parallel transport keeps the angle between vectors constant, cf. \cite[Proposition 2.74]{GHL}.
We conclude that there exists an orthonormal frame $e_1, \ldots, e_m$ of class $C^{1, \kappa}$ which is parallel along geodesic rays emanating
from the origin.
Next, consider the map $(w,v,t) \mapsto \partial_w \Phi (t,v)$ where $w$ varies in $\R^m$.
Fix $w$ and $v$ and consider again the curve $\gamma (t)$
above and the vector $\eta_{v,w} (t) = \partial_w \Phi (t,v)$. We claim that $\eta$ satisfies the Jacobi equation along the geodesic $\gamma$, with initial data $\eta_{v,w} (0) = 0$ and $\eta_{v,w}' (0) = w$. 
More precisely, if we write the vector field in the frame $e_i$ as $\eta (t) = \sum_i \eta_i (t) e_i (\gamma (t))$, the Jacobi equation is
\begin{equation}\label{e:Jacobi}
\eta_{v,w,i}'' (t) = - \sum_j R_{\gamma (t)} (e_j (\gamma (t)), \gamma' (t), \gamma' (t), e_i (\gamma (t))) \eta_{v,w,j} (t)\, ,
\end{equation}
where $R$ depends on the Riemann tensor (cf. \cite[Theorem 3.43]{GHL}). Note that we do not have the usual smoothness assumptions under which \eqref{e:Jacobi} is derived in standard textbooks. We can however proceed by regularizing our manifold $\cM$ via convolution of the function of which the manifold is a graph. We fix the corresponding graphical charts for the regularized manifolds and observe that the exponential maps in these coordinates have uniform $C^{1,\kappa}$ bounds from the corresponding ODEs and thus will converge to $\Phi$ in $C^1$. Similarly one concludes the obvious convergence statements for the Riemann tensor and thus the right hand side of \eqref{e:Jacobi} for the corresponding objects converge uniformly. This justifies, in the limit, that $\eta_{v,w,i}$ is twice differentiable (in time) and that \eqref{e:Jacobi} holds.

Taking into account that $\gamma (t) = \Phi (t,v)$ and $\gamma'(t)=
\partial_t \Phi (t,v)$ we conclude that $\eta_{v,w,i}$ satisfies an ODE of the type $\eta_{v,w,i}'' (t) = \Lambda (t,v, \eta_{v,w,i} (t))$ where the function
$\Lambda$ is $C^{1,\kappa}$ in all its entries. We thus conclude that the map $(v,w,t)\mapsto \eta_{v,w} (t) = \partial_w \Phi (t,v)$ is a $C^{1,\kappa}$ map. Since
$d\exp (v) (w) =  \partial_w \Phi (v,1)$, this implies that the exponential map is $C^{2,\kappa}$.

As for the last assertion, for $\|\phii\|_{C^{3,\kappa}}$ sufficiently small we conclude from the discussion above
that $\p_\pi\circ \exp$ is $C^2$ close to the identity, which implies the desired statement.
\end{proof}

\bibliographystyle{plain}
\bibliography{references-BU}

\end{document}